\newcommand{\Id}{\operatorname{Id}}
\newcommand{\Lip}{\operatorname{Lip}}
\newcommand{\gra}{\operatorname{gra}}
\newcommand{\N}{\mathcal{N}}
\newcommand{\M}{\mathcal{M}}
\newcommand{\weakk}{\stackrel{\ast}{\rightharpoonup}}
\newcommand{\R}{\mathbb{R}}
\newcommand{\conv}{\operatorname{conv}}
\newcommand{\bx}{\bar{x}}
\newcommand{\by}{\bar{y}}
\newcommand{\bX}{\bar{X}}
\newcommand{\bY}{\bar{Y}}
\newcommand{\cL}{\mathcal{L}}
\newcommand{\cX}{\mathcal{X}}
\DeclareMathOperator*{\argmin}{arg\,min}
\DeclareRobustCommand{\Chi}{{\mathpalette\irchi\relax}}
\newcommand{\irchi}[2]{\raisebox{\depth}{$#1\chi$}} 
\crefname{hypothesis}{Hypothesis}{Hypotheses}
\title{Learning firmly nonexpansive operators{\thanks{July 18, 2024}}}
\author{Kristian Bredies\textsuperscript{1} \and Jonathan Chirinos-Rodríguez\textsuperscript{2} \and Emanuele Naldi\textsuperscript{3}}
\begin{document}
\footnotetext[1]{Department of Mathematics and Scientific Computing, University of Graz, Graz, Austria (\email{kristian.bredies@uni-graz.at})}
\footnotetext[2]{Corresponding author. MaLGa, DIBRIS, Università degli Studi di Genova, Genoa, Italy (\email{jonathanchirinosrodriguez@gmail.com)}}
\footnotetext[3]{MaLGa, DIMA, Dipartimento di Eccellenza 2023-2027, Università degli Studi di Genova, Genoa, Italy (\email{emanuele.naldi@edu.unige.it})}.

\maketitle

\begin{abstract}
This paper proposes a data-driven approach for constructing firmly nonexpansive operators. We demonstrate its applicability in Plug-and-Play (PnP) methods, where classical algorithms such as Forward-Backward splitting, Chambolle--Pock primal-dual iteration, Douglas--Rachford iteration or alternating directions method of multipliers (ADMM), are modified by replacing one proximal map by a learned firmly nonexpansive operator. We provide sound mathematical background to the problem of learning such an operator via expected and empirical risk minimization. We prove that, as the number of training points increases, the empirical risk minimization problem converges (in the sense of Gamma-convergence) to the expected risk minimization problem. Further, we derive a solution strategy that ensures firmly nonexpansive and piecewise affine operators within the convex envelope of the training set. We show that this operator converges to the best empirical solution as the number of points in the envelope increases in an appropriate way. Finally, the experimental section details practical implementations of the method and presents an application in image denoising, where we consider a novel, interpretable PnP Chambolle--Pock primal-dual iteration.
\end{abstract}

\begin{keywords} Plug-and-Play methods, machine learning, firmly nonexpansive operators, Lipschitz functions, image denoising, proximal algorithms.
\end{keywords}

\begin{AMS}
65J20, 
65K10, 
46N10, 
52A05. 
\end{AMS}

\section{Introduction}
Solving an inverse problem consists in recovering a solution $y^*$ from measurements $x$, by knowing only the mapping that assigns to each possible solution the respective measured data. It is well known that reconstructing good solutions can become a particularly challenging task when the measurements contain noise and the problem is ill-posed. A classical example is the denoising problem, where the forward operator is equal to the identity, and a common  approach to tackle it is to find solutions of the following variational problem
\begin{equation}\label{eq:var}
\min_{y} \, f(y, x) +R(y),
\end{equation}
where $f$ is a \emph{data-fidelity} term such that $f(y, x)$ measures the discrepancy between $y$ and the observation $x$, and constrains the solution to remain close to the available measurements. The function $R$, called \emph{regularizer}, incorporates prior knowledge about the solution $y^*$ into the problem formulation. In this case, for instance, $R$ should be designed in such a way that the value of $R(y)$ is low when $y$ has low noise, and high otherwise. To this day, selecting an appropriate $R$ remains a challenging problem. 

The above strategy can be viewed as a \emph{model-based} technique, relying on a mathematical model with well-established properties. Variational methods have for a long time achieved state-of-the-art results \cite{burgerbenning,ChambollePock2016,CombettesPesquet2021,scher} in imaging problems. The design of refined regularization terms for solving image denoising problems (e.g. Total Variation \cite{chambtv,rof} or its further higher-order generalizations \cite{Bredies_2020,tgv,chanmarqmu}) contributed to achieve remarkable practical performances, while additionally providing robust theoretical guarantees. Notwithstanding this, \emph{data-driven} methodologies have gained significant attention in recent years, since they demonstrate improved performance in various practical scenarios while overcoming relevant challenges of classical methods (see \cite{Arridge2019} and references therein). The starting point of data-driven approaches is the assumption that a finite set of pairs of measurements and exact solutions $(\bx_1, \by_1),\dots, (\bx_n, \by_n)$, $n\in\mathbb{N}$, is available. This {\em training set} is then used to define, or refine, a regularization strategy to be applied to any future observation $\bx_{\text{new}}$, for which an exact solution is not known.

A recent data-driven approach for solving denoising problems is based on the following observation: if the noise present in the observations $x$ is Gaussian, then the MAP estimate is given by a solution of \cref{eq:var} using a quadratic data-fidelity term $f(y, x)=(1/2)\|y-x\|^2$ (see e.g. \cite{venkat2013}). Additionally, if the regularizer $R$ is proper, convex and lower semi-continuous, then the solution of \cref{eq:var} is unique, and is commonly termed as the proximal map of $R$; i.e., $y=\mathrm{prox}_R(x)$ for given data $x$. The latter consideration enlightens the link between proximal maps and the denoising problem, since, for a given input $x$, $\mathrm{prox}_R(x)$ outputs a denoised version of $x$. Consequently, it seems reasonable to construct instead a general operator $T^*$ functioning as a denoiser (e.g., $T^*\approx\mathrm{prox}_R$ for some unknown, convex, $R$) such that, given a measurement $x$, $T^*$ outputs a denoised version of $x$, $T^*(x)$, that is close, in some sense, to the ground truth $y^*$. The task of learning denoisers can then be further applied to solve more general inverse problems such as image deblurring or reconstruction problems in Magnetic Resonance Imaging (MRI) and Computed Tomography (CT), among others. The so-called Plug-and-Play (PnP) methods enable exactly this, since they aim to solve a general variational problem by substituting the proximal map in various splitting algorithms by a general denoiser. We mention here splitting methods such as the Forward-Backward splitting (FBS) algorithm \cite{ABS11,Bredies_FB,Rockafellar_FB,CW05,Gabay_FB,ggar,Tseng_FB}, the Alternating Directions Method of Multipliers (ADMM) \cite{Bot_ADMM,admm,Eckstein_ADMM,Bredies_2017}, the Douglas--Rachford splitting algorithm \cite{DR_original,EcksteinBertsekas_DR,DR_monotone,SVAITER2011}, the Chambolle--Pock primal-dual iteration (CP) \cite{Chambolle2011}, or other primal-dual approaches \cite{BrediesSun,combpesq,condat,OconnorVandenberghe}. This approach tackles two major challenges in traditional methods: it removes the need to select an appropriate regularizer $R$, which is often challenging, and it eliminates the need to compute the proximal operator of a convex function, which typically does not have a closed-form expression.

PnP approaches have demonstrated remarkable empirical performance across a diverse range of imaging applications \cite{buzz,dong,sree,teod,venkat2013} and, since then, several follow-up works have been published where different choices of denoisers and structures have been taken in consideration. Notably, the BM3D denoiser \cite{dabov,heide,sree}, employed in PnP methods \cite{pnpreview}, while ``handcrafted'' and, in a sense, also classical, is not a variational approach (i.e., it does not correspond to the proximal operator of a regularization functional). Likewise, denoising methods based on deep learning techniques have been considered more recently \cite{he,mein,zhang}. However, in this context, convergence guarantees for PnP methods are usually hard to ensure. Some works have already studied the theoretical properties of these methods. Among them, we mention \cite{chan}, where convergence with a denoiser which is assumed to satisfy certain asymptotic criteria, is proven, or \cite{sree,teod17}, where the authors prove convergence for PnP Forward-Backward splitting (PnP-FBS) and for PnP-ADMM under the assumption that the denoiser is nonexpansive. The authors of \cite{ryupnp2019} provide a convergence analysis for both PnP-FBS and PnP-ADMM under certain contractivity assumptions on the operator $D_\sigma-\Id$, where $D_\sigma$ is the denoiser. In order to recover more classical convergence guarantees, and motivated by the fact that the proximal operator of a convex function is a firmly nonexpansive operator, some authors have designed the denoiser as an artificial neural network that aims to be an averaged (or firmly nonexpansive) operator \cite{steidl2021,pescterr2021,celledoni23,Terris2020}. Authors in \cite{steidl2021} are, to the best of our knowledge, the first ones constructing  $\alpha$-averaged, firmly nonexpansive artificial neural networks. 

As we will discuss later, learning an averaged (or firmly nonexpansive) operator ultimately reduces to the challenge of learning a nonexpansive operator, a problem that has gained considerable attention in the past years. In fact, ensuring nonexpansivity, or more generally, enforcing a Lipschitz constraint on an artificial neural network, poses significant practical difficulties \cite{pescterr2021}. Additionally, it is well known that imposing strict constraints on the Lipschitz constant of a network's layer can negatively impact its expressive power \cite{Anil2019,Lanthaler2024,Tanielian2021,zhang2022}, even though recent works have shown that carefully selecting activation functions can improve expressivity also in this context \cite{ducotterd2024,Neumayer2023}. To avoid imposing strict constraints, the authors of \cite{pescterr2021} propose a stochastic penalization of the gradient norm of the network which, while not strictly guaranteeing the desired properties, has demonstrated promising practical performance. The above considerations raise the need for a complete theoretical analysis of the problem of learning (firmly) nonexpansive operators, giving the first motivation for our work.

Another significant motivation for our work arises from the inherent infinite-dimensional nature of the set of nonexpansive operators. As a result, a possible approach to tackle this issue in practice is to discretize this set. For example, methodologies relying on neural networks approximate this space using a vast array of parameters. Consequently, several pertinent questions arise: how close is the discretized space to its continuous version? Is it possible to provide any density results? Does the solution of the discretized problem converge to that of its continuous counterpart? For instance, in \cite{pescterr2021}, authors were able to prove that the set of their introduced operators is dense in a subclass of maximal monotone operators. To the best of our knowledge, the later work is the only one providing a density result in this flavor.

\subsection*{Contributions} In this paper, we focus on providing an analysis that incorporates all of the theoretical aspects mentioned above. The contributions of this work are described below:
\begin{itemize}
 \item In \cref{prelimns}, we provide a rigorous mathematical framework to address the problem of learning an operator through a constrained minimization problem. We first recall that the focus can be shifted from the space of firmly nonexpansive operators to the space of nonexpansive operators. We analyze the topological properties of this set, which can be characterized as a subset of the dual of a suitable Banach space, and thus naturally inherits a notion of weak$^*$ convergence  \cite{Guerrero2018, Weaver}. We also show that this pre-dual space is separable whenever the ambient space is separable.

 \item To construct nonexpansive operators, we adopt in \cref{sec:statmodel} a supervised learning approach \cite{cusma}. We fix $(\bar X, \bar Y)$ to be a pair of random variables and we aim at finding $N^*$, the minimizer of the expected risk, within the set $\mathcal{N}$ of nonexpansive operators,
 $$
 N^*\in\argmin_{N\in\mathcal{N}}F(N), \quad F(N):=\mathbb{E}[\|N(\bar X)-\bar Y\|^2].
 $$
By simply assuming that the pair $(\bar X, \bar Y)$ satisfies $\mathbb{E}[\|\bar X\|^2+\|\bar Y\|^2]<\infty$, we show that $N^*$ exists. This will be proved by combining both \cref{thm:existence} and \cref{prop:experr}. However, there is no access to the exact distribution of the pair $(\bar X, \bar Y)$, but to a finite set $(\bar X_1,\bar Y_1), \ldots, (\bX_n, \bY_n)$, $n\in\mathbb{N}$, of independent and identically distributed copies of $(\bar X, \bar Y)$. Then, the nonexpansive operator that approximates $N^*$ is constructed as a minimizer of the associated empirical risk minimization (ERM) problem over the space of nonexpansive operators:
\begin{equation}\label{eq:nonexERM}
N^*_n\in\argmin_{N\in \N}F_n(N), \quad F_n(N):=\frac{1}{n}\sum_{i=1}^n\|N(\bar X_i)-\bar Y_i\|^2.
\end{equation}
In the context of image denoising, in which the random variable $\bar X$ denotes noisy images and $\bar Y$ clean images, $N^*$ is the best nonexpansive operator performing the denoising task for such a given training set, and $N^*_n$ its discrete approximation. Naturally, it is possible to utilize $N^*_n$ for denoising future measurements that are similar to those seen during training. However, our objective is to utilize such operator for constructing the denoiser that will be further plugged into a Plug-and-Play method. We point out that, up to this point, we have followed a classical supervised learning approach \cite{cusma}, and the novelty consists in using this methodology within the particular context of (firmly) nonexpansive operators.

\item In the aforementioned setting, we prove that, with probability $1$, $F_n$ in \cref{eq:nonexERM} $\Gamma$-converges to the expected risk $F$ as the number of points in the training set goes to infinity. This result, presented in \cref{thm:gammaconv}, holds significant importance as it implies, in particular, that, with probability $1$, if a sequence of minimizers of the ERM, for every $n\in\mathbb{N}$, is considered, then (up to subsequences) it converges to a minimizer $N^*$ of the expected risk, see \cref{cor:convofminimizers}.

\item As previously mentioned, the class of nonexpansive operators is infinite-dimensional, and a further discretization is required. To address this, we propose a discretization for such a space via piecewise affine functions uniquely determined by simplicial partitions, or triangulations, of the underlying space. This setting yields two main consequences:
\begin{enumerate}[label=(\roman*),leftmargin=*]
\item By making use of standard numerical analysis techniques, we propose in \cref{sec:piecewise} a constructive approach for designing an operator which will be ensured to be piecewise affine and nonexpansive.
\item We establish in \cref{sec:density} a density result, showing that the class of considered approximating operators is actually large enough: we show that successively finer triangulations lead to a closer approximation to $\widehat N$, minimizer of \cref{eq:nonexERM}, see \cref{thm:density}.
\end{enumerate}
\item In \cref{exps} we present our numerical simulations, in which we precisely detail and illustrate how to practically implement the piecewise affine methodology described in the previous point. In particular, and as such approach is known to be computationally intractable in high dimensions, we limit our experimental setup to a a low-dimensional setting. We point out that the main objective of this section is not to present a state-of-the-art denoiser, but to illustrate our constructive approach, showing that it can work well in practice. We provide two main contributions: 
\begin{enumerate}[label=(\roman*),leftmargin=*]
\item In \cref{sec:pnpcp}, we introduce and analyze a novel PnP Chambolle--Pock primal–dual iteration that makes use of the so-called Moreau's identity, thereby providing a better interpretability of the proposed method. Additionally, while also enjoying strong theoretical guarantees, we show that our PnP Chambolle--Pock algorithm can perform well in practice.
\item Finally, we evaluate our proposed method in imaging applications, particularly addressing the problem of image denoising. In particular, we show that the bad scalability of simplicial partitions can be tackled by noticing that, in many applications, the proximity operator that we aim to learn is separable, and hence it can be treated in a low-dimensional setting. We analyze in depth our learned denoiser and compare it with variational approaches based on classic regularizers.
\end{enumerate}
\end{itemize}
In summary, we believe that this work addresses several questions previously raised in the literature by establishing a robust theoretical framework to the problem of learning (firmly) nonexpansive operators. This will be further complemented with a constructive, practical approach that aligns with and is supported by the theoretical findings.
\section{Preliminaries}\label{prelimns}
\subsection{Notation and definitions} \label{sec:defs}
Throughout the paper, $(\cX, \langle\cdot,\cdot\rangle)$ denotes a real separable Hilbert space. We denote by $\|\cdot\|$ the norm induced by the inner product $\langle\cdot,\cdot\rangle$ in $\cX$. In the context of convex analysis, $\Gamma_0(\cX)$ denotes the space of proper, convex, and lower semi-continuous functions $f: X\to(-\infty, +\infty]$. Moreover, we recall that given a set $C\subset \cX$, its \emph{convex envelope} $\conv (C)$ is the smallest convex subset of $\cX$ containing $C$. We define the \emph{indicator} function of $C$ as follows
$$
\iota_C(x)=
\begin{cases}
0 & \text{if } x\in C,\\
+\infty & \text{else},
\end{cases}
$$
and the \emph{characteristic} function of any set $C\subset \cX$ as
$$
\Chi_C(x)=
\begin{cases}
1 & \text{if } x\in C,\\
0 & \text{else}.
\end{cases}
$$
We define now the main object of study of this paper. An operator $T:\cX\to\cX$ is said to be \emph{firmly nonexpansive} if
\[\|T(x)-T(x')\|^2+\|(\Id-T)(x)-(\Id-T)(x')\|^2\leq\|x-x'\|^2, \text{ for every } x,x'\in\cX,\]
where $\Id$ denotes the identity map on $\cX$. We say that $T$ is
\emph{nonexpansive} if
\[\|T(x)-T(x')\|\leq\|x-x'\|, \text{ for every } x,x'\in\cX.\]
A (set-valued) operator $A:\cX\to 2^\cX$ is \emph{monotone} if
\[\langle u-u',x-x'\rangle \geq 0, \text{ for every } (x,u),(x',u') \in \gra A,\]
where $\gra A$ denotes the graph of the mapping $A$. The operator $A$ is said to be \emph{maximal monotone} if it is monotone and, for a monotone operator $A'$ such that $\gra A \subset \gra A'$, it follows that $A=A'$. The \emph{resolvent operator} of $A$ is defined as 
$$
J_A:=(\Id+A)^{-1}.
$$
An important class of maximal monotone operators is the class of subdifferentials of proper, convex and lower semi-continuous functions. It is well-known \cite{BCombettes} that the proximal operator of $R$ is characterized as the resolvent of its subdifferential. For this reason, we report below some important properties of the class of resolvents of maximal monotone operators.

\subsection{A useful characterization}\label{sec:charac}
Since we want to study the structure of the set of resolvent operators,
we first introduce the following set
\begin{equation}\label{eq:fnespace}
\M:=\left\{T:\cX\to\cX \, : \, T=J_{A}, \text{ where } A \text{ is maximal monotone}\right\}.
\end{equation}
An element $T\in\M$ is not necessarily a proximal operator, but every proximal operator of a function in $\Gamma_0(\cX)$ is contained in $\M$. For completeness, we recall the following result taken from \cite[Corollary 23.9]{BCombettes}. 
\begin{lemma}\label{lem:fneequivmm}
The mapping $T:\cX\to\cX$ is the resolvent of a maximal monotone operator if and only if $T$ is firmly nonexpansive.
\end{lemma}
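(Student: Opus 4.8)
The plan is to reduce firm nonexpansiveness to a single inner-product inequality and then invoke Minty's characterization of maximal monotonicity to match it with the resolvent structure. First I would record the elementary reformulation: writing $a=T(x)-T(x')$ and $d=x-x'$, the defining inequality $\|a\|^2+\|d-a\|^2\leq\|d\|^2$ expands to $2\|a\|^2-2\langle d,a\rangle\leq 0$, so that $T$ is firmly nonexpansive if and only if
\[
\|T(x)-T(x')\|^2\leq\langle T(x)-T(x'),\,x-x'\rangle\quad\text{for every }x,x'\in X.
\]
This single inequality is the workhorse for both implications, and I would establish it at the outset.

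For the implication that a resolvent is firmly nonexpansive, suppose $T=J_A=(\Id+A)^{-1}$ with $A$ maximal monotone. Recall first that, by Minty's theorem, maximal monotonicity of $A$ guarantees that $J_A$ is single-valued with full domain $X$, so $T$ is a genuine map $X\to X$. Given $x,x'$, I set $u=T(x)$, $u'=T(x')$, so that $x-u\in A(u)$ and $x'-u'\in A(u')$. Feeding these into the monotonicity inequality $\langle(x-u)-(x'-u'),\,u-u'\rangle\geq 0$ and rearranging yields $\langle x-x',u-u'\rangle\geq\|u-u'\|^2$, which is exactly the reformulated condition above.

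For the converse, I would \emph{construct} the operator whose resolvent is $T$. Define the set-valued operator $A$ through its graph by $\gra A:=\{(T(x),\,x-T(x)) : x\in X\}$, i.e. formally $A=T^{-1}-\Id$. Monotonicity of $A$ is immediate from the reformulated inequality: for $(u,x-u)$ and $(u',x'-u')$ in $\gra A$ one computes $\langle(x-u)-(x'-u'),\,u-u'\rangle=\langle x-x',u-u'\rangle-\|u-u'\|^2\geq 0$. Maximality I would obtain via Minty: since for each $x\in X$ the point $u=T(x)$ satisfies $x=u+(x-u)\in(\Id+A)(u)$, the range of $\Id+A$ is all of $X$, and a monotone operator with $\Image(\Id+A)=X$ is maximal monotone. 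The same surjectivity (together with the single-valuedness that maximality provides) shows $J_A(x)=u=T(x)$, closing the argument.

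The main obstacle, and the place where a self-contained proof is genuinely nontrivial, is Minty's theorem itself: both the single-valuedness/full-domain claim for $J_A$ in the first direction and the ``range equals $X$ implies maximal'' step in the second rely on it, and its proof in the infinite-dimensional Hilbert setting requires a nontrivial fixed-point or surjectivity argument rather than a routine computation. Everything else is bookkeeping around the inner-product reformulation. Since the paper already cites \cite[Corollary 23.9]{BCombettes}, I would invoke Minty's theorem as a known ingredient and present the reformulation plus the two short monotonicity computations as the essential content.
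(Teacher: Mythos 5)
Your proof is correct, and it is essentially the standard argument behind this result: the paper itself gives no proof, citing \cite[Corollary 23.9]{BCombettes} instead, and your route---the reformulation of firm nonexpansiveness as $\|T(x)-T(x')\|^2\leq\langle T(x)-T(x'),\,x-x'\rangle$, the two monotonicity computations, and Minty's theorem applied to $\gra A=\{(T(x),\,x-T(x)) : x\in X\}$ with $\Image(\Id+A)=X$---is precisely the proof underlying that citation. The only cosmetic remark is that single-valuedness of $J_A$ already follows from monotonicity alone (strict monotonicity of $\Id+A$), so attributing it to maximality is slightly stronger than needed, but this does not affect the validity of the argument.
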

Moreover, as already pointed out in \cite{pescterr2021} for this setting, given $T:\cX\to\cX$ a firmly nonexpansive operator, there exists a nonexpansive operator $N:\cX\to\cX$ such that $T=(1/2)(\Id+N)$. Actually, these conditions are equivalent \cite[Proposition 4.4]{BCombettes}. In fact, an operator $T$ can be expressed by $T=(1/2)(\Id+N)$, for some nonexpansive operator $N:\cX\to\cX$, if and only if $T$ is firmly nonexpansive (we refer to \cite{BCombettes} for further details). We can therefore reduce our study to the space of nonexpansive operators, i.e., the space
\begin{equation}\label{eq:nonexpspace}
\mathcal{N}:=\left\{N\colon \cX\to\cX \, : \, N \text{ is nonexpansive}\right\}.
\end{equation}
For further purposes, we also denote by $\mathcal{N}_0$ the space of nonexpansive operators that vanish at zero. We recall that nonexpansive operators coincide with $1$-Lipschitz operators from $\cX$ to $\cX$. This fact motivates us to study the space of Lipschitz operators and its topological structure.

\subsection{\texorpdfstring{The spaces $\Lip_0(\cX, \cX)$ and $\Lip(\cX, \cX)$}{The spaces Lip0(X) and Lip(X)}} Denote by $\Lip(\cX, \cX)$ the space of $\cX$-valued Lipschitz operators $T\colon \cX\to\cX$ and by $\Lip_0(\cX, \cX)$ the space of $\cX$-valued Lipschitz operators that vanish at $0$. Given $T\in\Lip(\cX, \cX)$, define the quantity
$$
\cL(T):=\sup_{x\neq y} \frac{\|T(x)-T(y)\|}{\|x-y\|},
$$
which corresponds to the smallest Lipschitz constant of $T$. Then, $\Lip_0(\cX, \cX)$ is a Banach space with norm $\|\cdot\|_{\Lip_0}:=\cL$, see e.g.~\cite{Guerrero2018} (or \cite{Weaver} for the scalar case). Recall that, to be consistent with the rest of the paper, we denote by $\mathcal{N}_0$ the closed unit ball of this space. Moreover, the space of Lipschitz operators $\Lip(\cX, \cX)$, endowed with the norm
$$
\|T\|_{\Lip}:=\|T(0)\|+\cL(T-T(0)),
$$
is also a Banach space. Such property can also be derived from the following result.
\begin{proposition}\label{iota}
$\Lip(\cX, \cX)$ is isometrically isomorphic to $\mathcal{Y}=\cX\times\Lip_0(\cX, \cX)$; i.e.,
$$
\Lip(\cX, \cX)\cong \cX\times \Lip_0(\cX, \cX),
$$
where the right-hand side is a Banach space with the norm
$$
\|(x,T_0)\|_{\mathcal{Y}}:= \|x\|+\cL(T_0),
$$
for every $x\in\cX$, $T_0\in \Lip_0(\cX, \cX)$.
\end{proposition}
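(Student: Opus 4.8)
The plan is to exhibit an explicit linear isometric bijection between $\Lip(X)$ and $Y=X\times\Lip_0(X)$ and then simply read off the statement. I would define $\Phi\colon\Lip(X)\to Y$ by $\Phi(T)=(T(0),\,T-T(0))$, where $T-T(0)$ denotes the operator $x\mapsto T(x)-T(0)$. First I would verify that $\Phi$ actually lands in $Y$: the operator $T-T(0)$ vanishes at $0$ by construction, and since subtracting a constant does not affect any difference quotient $\tfrac{\|T(x)-T(y)\|}{\|x-y\|}$, one has $\cL(T-T(0))=\cL(T)<\infty$, so $T-T(0)\in\Lip_0(X)$. Linearity of $\Phi$ is immediate, because both $T\mapsto T(0)$ and $T\mapsto T-T(0)$ are linear in $T$.

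Next I would produce the inverse $\Psi\colon Y\to\Lip(X)$ given by $\Psi(x,T_0)=x+T_0$, i.e.\ the operator $y\mapsto x+T_0(y)$; this map is Lipschitz with constant $\cL(T_0)$, hence belongs to $\Lip(X)$. A direct computation then shows that $\Phi$ and $\Psi$ are mutually inverse: evaluating $\Psi(x,T_0)$ at $0$ returns $x+T_0(0)=x$ since $T_0(0)=0$, and subtracting this constant recovers $T_0$, so $\Phi\circ\Psi=\Id_Y$; conversely $\Psi(\Phi(T))=T(0)+(T-T(0))=T$, so $\Psi\circ\Phi=\Id_{\Lip(X)}$. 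Thus $\Phi$ is a linear bijection.

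Finally, the isometry is essentially built into the definition of the norm on $\Lip(X)$: for every $T\in\Lip(X)$ one has
$$\|\Phi(T)\|_Y=\|T(0)\|+\cL(T-T(0))=\|T\|_{\Lip},$$
which is exactly the defining formula for $\|\cdot\|_{\Lip}$. This establishes the isometric isomorphism $\Lip(X)\cong Y$. The announced completeness of $\Lip(X)$ then comes for free: $\Lip_0(X)$ is a Banach space by \cite{Weaver}, the product norm $\|(x,T_0)\|_Y=\|x\|+\cL(T_0)$ makes $Y$ a Banach space, and an isometric isomorphism transfers completeness.

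I expect no serious obstacle here; the entire content lies in choosing the decomposition map $T\mapsto(T(0),T-T(0))$, and the only points that demand (routine) care are confirming that $T-T(0)\in\Lip_0(X)$ with $\cL(T-T(0))=\cL(T)$, and that $\Psi$ is a genuine two-sided inverse. The isometry itself is tautological given how $\|\cdot\|_{\Lip}$ was defined.
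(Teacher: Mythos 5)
Your proposal is correct and follows essentially the same route as the paper: the same decomposition map $T\mapsto(T(0),T-T(0))$, the same tautological isometry computation, and the same inverse $ (x,T_0)\mapsto x+T_0$ (the paper merely phrases the bijectivity as separate injectivity/surjectivity checks rather than verifying a two-sided inverse). Your additional explicit verification that $\cL(T-T(0))=\cL(T)$ and the remark on transferring completeness are fine, routine complements.
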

\begin{proof} Define the following linear mapping
$$
\varphi\colon\Lip(\cX, \cX)\rightarrow \cX\times\Lip_0(\cX, \cX); \quad T\mapsto (T(0), T-T(0)).
$$
First, observe that $\varphi$ is an isometry:
$$
\|\varphi(T)\|_{\mathcal{Y}}=\|T(0)\|+\cL(T-T(0))=\|T\|_{\Lip}.
$$
Moreover, $\varphi$ is bijective. If we take $T\in\Lip(\cX, \cX)$ such that
$$
(T(0),T-T(0))=(0,0),
$$
we first get that $T(0)=0$, and by using this fact in the second component, we obtain that $T=0$. Thus, $\varphi$ is injective. Finally, let $(x,T_0)\in \cX\times\Lip_0(\cX, \cX)$. Choosing $T=x+T_0\in\Lip(\cX, \cX)$ and noting that $T(0)=x+T_0(0)=x$, we see that $\varphi(T)=(x, T_0)$. Hence, $\varphi$ is surjective and the result has been proven.
\end{proof}

Due to this result, we can identify every $T\in\Lip(\cX,\cX)$ as $T\equiv (T(0),T-T(0))$. As $\Lip(\cX, \cX)$ is a normed space, it naturally inherits the strong topology associated to the norm $\|\cdot\|_{\Lip}$. However, we next aim to introduce and characterize a weaker topology for $\Lip(\cX, \cX)$, $\tau_{\Lip}$, that will be used throughout the paper. We will see that the space $\Lip(\cX, \cX)$, or more specifically $\Lip_0(\cX, \cX)$, is a dual space, and so, it has an associated weak$^*$ topology. When considering Lipschitz spaces of scalar functions $\Lip_0(\cX,\R)$, the existence of a pre-dual is well-known, and in that case, the pre-dual space (in the sense of a strongly unique pre-dual space) of $\Lip_0(\cX,\R)$, often denoted by $\mathcal{F}(\cX, \R)$, is known as Lipschitz-free \cite{Cobzas2019, Godefroy03}, Arens--Eels \cite{Arens56,Weaver}, or Kantorovich--Rubinstein space \cite{Hanin92, Kantorovich82}. Guerrero, López--Pérez and Rueda Zoca mentioned in \cite{Guerrero2018} the construction of a space $\mathcal{F}(\cX, \cX)$ that they consider as a pre-dual of $\Lip_0(\cX, \cX)$; i.e., such that
\begin{equation}\label{eq:predual_Lip0}
\Lip_0(\cX, \cX)\cong \Lip_0(\cX, \cX^*) \cong \mathcal{F}(\cX, \cX)^*.
\end{equation}
To the best of our knowledge, the work \cite{Guerrero2018} is the first to introduce a pre-dual for the general setting of $\Lip_0(\cX, \cX)$. However, a proof of \cref{eq:predual_Lip0} is not given there. For completeness, we detail here the construction and the needed proofs. We will follow \cite[Section 8.2.2]{Cobzas2019} and adapt the proof of \cite[Theorem 8.2.7]{Cobzas2019} to the vector-valued case. We will not address the question of whether this pre-dual is a (strongly) unique pre-dual.

For any $x,z\in\cX$, we define as in \cite{Guerrero2018} the linear operators
\[
\delta_{x,z} \colon \Lip_0(\cX, \cX) \to \R \ ; \ T \mapsto \langle T(x),z\rangle,
\]
and we note that they are also continuous since, for any $T\in \Lip_0(\cX,\cX)$, it holds $\delta_{x,z}(T) = \langle T(x),z\rangle \leq \mathcal{L}(T) \|x\|\|z\|$. Next, define
\begin{equation}\label{eq:setM}
M := \text{span}\{\delta_{x,z} \colon x, z \in\cX\}\subset \Lip_0(\cX,\cX)^*.
\end{equation}
In \cite{Guerrero2018}, the authors consider the completion of $M$ with respect to the topology of $\Lip_0(\cX, \cX)^*$ as a natural candidate to be a pre-dual of $\Lip_0(\cX, \cX)$; i.e., they define
$$
\mathcal{F}(\cX, \cX):= \overline{\text{span}}\{\delta_{x,z} \colon x, z \in\cX\},
$$
and state that the space above is a pre-dual of $\Lip_0(\cX, \cX)$. We prove this in the following result.

\begin{theorem}\label{thm:predual}
   The space $\Lip_0(\cX,\cX)$ is isometrically isomorphic to $\mathcal{F}(\cX,\cX)^*$.
\end{theorem}
\begin{proof}
To prove the result, we consider the map $\Lambda \colon \Lip_0(\cX,\cX) \to \mathcal{F}(\cX,\cX)^*$ that is determined by the condition
    \begin{equation}\label{eq:equation_in_proof}
    (\Lambda T) \left(\sum_{i=1}^{n} a_i \delta_{x_i,z_i}\right) = \sum_{i=1}^n a_i \langle T(x_i), z_i \rangle,
    \end{equation}
    for $\sum_{i=1}^{n} a_i \delta_{x_i,z_i} \in M$, where $M$ given by \cref{eq:setM}. We will show that  \eqref{eq:equation_in_proof} uniquely determines $\Lambda$ on $\Lip_0(\cX, \cX)$ and that $\Lambda$ is an isometric isomorphism. Clearly, for every $T \in \Lip_0(\cX,\cX)$ it holds that $\Lambda T$ is linear on $M$. Moreover, for $T\neq 0$ we have 
\begin{equation*}
    \begin{aligned}
    \frac{1}{\mathcal{L}(T)}\bigg|(\Lambda T)\bigg(\sum_{i=1}^n a_i \delta_{x_i,z_i}\bigg)\bigg|\leq  \sup_{T \in \mathcal{N}_0} \bigg| \sum_{i=1}^n a_i\left\langle T(x_i), z_i\right\rangle\bigg| = \bigg\|\sum_{i=1}^n a_i \delta_{x_i, z_i} \bigg\|_{\mathcal{F}(\cX,\cX)},
    \end{aligned}
\end{equation*}
so that $|(\Lambda T)(\xi)| \leq \mathcal{L}(T) \|\xi\|_{\mathcal{F}(\cX,\cX)}$ for all $\xi\in M$ (note that $\|\,\cdot\,\|_{\mathcal{F}(\cX,\cX)}$ is induced by the norm on $\Lip_0(\cX, \cX)^*$). Because of this property and the extension theorem for linear, continuous operators on subspaces of Banach spaces, the operators $\Lambda T$ are uniquely determined by \eqref{eq:equation_in_proof} as elements in $\mathcal{F}(\cX,\cX)^*$ (by density). Moreover, we have $\|\Lambda\| \leq 1$. Next, for all $\phi \in \mathcal{F}(\cX,\cX)^*$, we define the map
$$
\Gamma \colon \mathcal{F}(\cX,\cX)^*\to \Lip_0(\cX,\cX); \quad \langle\Gamma \phi (x), z \rangle := \phi(\delta_{x,z}) \quad \text{for all } x, z \in \cX.
$$ 
Notice that the above condition uniquely defines the function $\Gamma \phi: \cX\to \cX$ for all $\phi \in \mathcal{F}(\cX,\cX)^*$. We finally have that indeed, $\Gamma\Phi \in \Lip_0(\cX, \cX)$ since
\begin{equation*}
    \begin{aligned}
        |\langle (\Gamma \phi)(0), z\rangle| = |\phi (\delta_{0,z})| & \leq \|\phi\|_{\mathcal{F}(\cX,\cX)^*}\| \delta_{0,z}\|_{\mathcal{F}(\cX,\cX)} \\ & = \|\phi\|_{\mathcal{F}(\cX,\cX)^*}\sup_{T\in \mathcal{N}_0}\langle T(0),z\rangle = 0,
    \end{aligned}
\end{equation*}
for all $z \in \cX$ implying $\Gamma\Phi(0)=0$ and for $x$, $y\in\cX$,
\begin{equation*}
    \begin{aligned}
\|(\Gamma \phi)(x)-(\Gamma \phi)(y)\| & = \sup_{\|z\|=1} |\langle (\Gamma \phi)(x)-(\Gamma \phi) (y), z\rangle| = \sup_{\|z\|=1} |\phi(\delta_{x,z}-\delta_{y,z})|\\
& \leq \sup_{\|z\|=1}\|\phi\|_{\mathcal{F}(\cX,\cX)^*}\|\delta_{x,z}-\delta_{y,z}\|_{\mathcal{F}(\cX,\cX)} \\ & = \sup_{\|z\|=1}\|\phi\|_{\mathcal{F}(\cX,\cX)^*} \sup_{T\in \mathcal{N}_0}\langle T(x)-T(y), z \rangle \\ & \leq \|\phi\|_{\mathcal{F}(\cX,\cX)^*} \|x-y\|.
    \end{aligned}
\end{equation*}
Hence, $\Gamma \phi \in \Lip_0(\cX,\cX)$ and $\|\Gamma\|\leq 1$. Given $T\in \Lip_0(\cX,\cX)$, we have
\[\langle (\Gamma \Lambda T) (x), z\rangle =(\Lambda T)(\delta_{x,z})=\langle T(x), z \rangle \quad \text{for all } x, z \in \cX,\]
so that $\Gamma \Lambda T = T$ and $\Gamma\Lambda=\Id_{\Lip_0(\cX,\cX)}$. On the other hand, given $\phi \in \mathcal{F}(\cX,\cX)^*$, we have
\[\begin{aligned}
    (\Lambda \Gamma \phi) \left(\sum_{i=1}^na_i\delta_{x_i,z_i}\right) & = \sum_{i=1}^n a_i \langle \Gamma \phi (x_i), z_i\rangle = \sum_{i=1}^n a_i \phi(\delta_{x_i,z_i}) \\
    &= \phi \left(\sum_{i=1}^na_i\delta_{x_i,z_i}\right),
    \end{aligned}\]
for every $\sum_{i=1}^{n} a_i \delta_{x_i,z_i} \in M$. So that, by density of $M$, it must be $\Lambda \Gamma\phi=\phi$ and $\Lambda \Gamma = \Id_{\mathcal{F}(\cX,\cX)^*}$, which together with $\|\Lambda\| \leq 1$ and $\|\Gamma\| \leq 1$ concludes the proof.
\end{proof}

Combining \cref{thm:predual} with \cref{iota}, we get that 
$$
\left(\cX\times\mathcal{F}(\cX, \cX)\right)^*=\cX^*\times\Lip_0(\cX, \cX)=\cX\times\Lip_0(\cX, \cX)\cong \Lip(\cX, \cX),
$$
so $\cX\times\mathcal{F}(\cX, \cX)$ is indeed a predual space of $\Lip(\cX, \cX)$.
Next, we will show that, since $\cX$ is separable, $\mathcal{F}(\cX, \cX)$ is also separable. The case in which $\cX$ is finite-dimensional is well known (it follows for example from \cite[Theorem 3.14]{Weaver}). For completeness, we provide a proof for the infinite-dimensional case.
\begin{theorem}
    \label{thm:separability}
    The space $\mathcal{F}(\cX, \cX)$ is separable.
\end{theorem}
\begin{proof}
Let $\widetilde{\cX}$ be a dense countable set in $\cX$. Given an element $\delta_{x,z}$ with $x,z \in\cX$, for any $\bar {\varepsilon} > 0$ there are elements $\bar{x}, \bar{z} \in \widetilde{\cX}$ such that $\|\bar x - x\|\leq \bar \varepsilon$ and $\|\bar z - z\| \leq \bar \varepsilon$. Thus,
\begin{equation*}
\begin{aligned}
    \|\delta_{x,z}-\delta_{\bar x,\bar z}\|_{\mathcal{F}(\cX,\cX)} & = \sup_{T \in \N_0} |\langle T(x), z \rangle - \langle T(\bar x), \bar z \rangle| \\ 
    &\leq \sup_{T\in \N_0} |\langle T(x), z \rangle - \langle T( x), \bar z \rangle| + |\langle T(x), \bar z \rangle - \langle T(\bar x), \bar z \rangle| \\
    & \leq \sup_{T\in \N_0}  \| T(x) \| \|z-\bar z\| + \|\bar z\| \|T(x) - T(\bar x)\| \\
    & \leq \|x\| \|z-\bar z\| + (\| z\| + \|z-\bar z\|) \| x- \bar x \| \leq \bar \varepsilon \|x\| + \bar \varepsilon \|z\| + \bar{\varepsilon}^2.
    \end{aligned}
\end{equation*}
Let now $\xi \in \mathcal{F}(\cX, \cX)$. By definition of $\mathcal{F}(\cX, \cX)$, for every $\varepsilon>0 $, there exists $n \in \mathbb{N}$, points $x_i,z_i\in\cX$, and coefficients $a_i\in \R$, $i=1,\dots,n$, such that
$$
\left\|\sum_{i=1}^n a_i\delta_{x_i,z_i}-\xi\right\|_{\mathcal{F}(\cX,\cX)} \leq \frac\varepsilon 2,
$$
and, without loss of generality, we suppose that $a_i\neq 0$, $i=1,\dots,n$. Actually, we can find the same estimate restricting ourselves to $a_i\in \mathbb{Q}$, $i=1,\dots,n$. Setting $\varepsilon_i > 0$ such that $\varepsilon_i \|x_i\| + \varepsilon_i \|z_i\| + \varepsilon_i^2 \leq \frac{\varepsilon}{2n |a_i|}$, $i=1,\dots, n$, there exist $\bar x_i, \bar z_i \in \widetilde{\cX}$, $i=1,\dots,n$, such that 
$$
\|\delta_{x_i,z_i}-\delta_{\bar x_i,\bar z_i}\|_{\mathcal{F}(\cX,\cX)} \leq \varepsilon_i \|x_i\| + \varepsilon_i \|z_i\| + \varepsilon_i^2 \leq \frac{\varepsilon}{2n |a_i|}.
$$
Thus, \[\bigg\|\sum_{i=1}^n a_i\delta_{\bar x_i,\bar z_i}-\xi\bigg\|_{\mathcal{F}(\cX,\cX)}\leq \sum_{i=1}^n |a_i| \|\delta_{x_i,z_i}-\delta_{\bar x_i,\bar z_i}\|_{\mathcal{F}(\cX,\cX)} + \bigg\|\sum_{i=1}^n a_i\delta_{x_i,z_i}-\xi\bigg\|_{\mathcal{F}(\cX,\cX)}  \leq \varepsilon.\]
In this way, we have proven that the countable set $\text{span}_{\mathbb{Q}}\{\delta_{\bar x,\bar z} \colon \bar x \in \widetilde{X}, \bar z \in \widetilde{\cX}\}$ is dense in $\mathcal{F}(\cX, \cX)$.
\end{proof}
With the above results, we have shown that the space $\Lip(\cX, \cX)$ is indeed a dual space. Using this, we can consider on $\Lip(\cX, \cX)$ the associated weak$^*$ topology that we denote by $\tau_{\Lip}$. By \cref{iota}, this topology can be further identified with the product topology of $\tau_{X}$ and $\tau_{\Lip_0}$, where $\tau_{\cX}$ is the weak topology on $\cX$ and $\tau_{\Lip_0}$ is the weak$^*$ topology on $\Lip_0(\cX, \cX)$. In particular, $\Lip_0(\cX, \cX)$ is a weak$^*$-closed subspace of $\Lip(\cX, \cX)$. Further, since $\mathcal{F}(\cX, \cX)$ is separable, every bounded set of $(\Lip_0(\cX, \cX),\tau_{\Lip_0})$ is first countable. This implies that its topology can be completely characterized by its convergent sequences. It follows from the characterization stated in \cref{thm:predual} that, similarly to the scalar case \cite{Weaver}, $\tau_{\Lip_0}$ corresponds to the topology of pointwise weak convergence on bounded sets. In fact, the following result hods true.
\begin{lemma} A sequence $(T_k)_{k\in\mathbb{N}}$ in $\Lip_0(\cX, \cX)$ is weak$^*$ converging to $T\in\Lip_0(\cX, \cX)$ if and only if $(T_k)_{k\in\mathbb{N}}$ is bounded (i.e., there exists $C>0$ such that $\mathcal{L}(T_k)\leq C$ for every $k\in\mathbb{N}$) and $T_k(x)\rightharpoonup T(x)$ as $k\to\infty$ for every $x\in\cX$.
\end{lemma}
\begin{proof}
On the one hand, if $T_k\weakk T$ as $k\to\infty$, then it is bounded and for all $x$, $z\in \cX$ we have $\langle T_k(x), z\rangle = \langle\delta_{x, z}, T_k\rangle\to \langle \delta_{x, z}, T\rangle = \langle T(x), z\rangle$ as $k\to\infty$. On the other hand, $T_k(x) \rightharpoonup T(x)$ for each $x\in \cX$ implies that $\langle \xi', T_k\rangle \to \langle \xi', T\rangle$ for all $\xi'\in M$ as $k\to\infty$ (recall that $M$ was defined in \cref{eq:setM}). Finally, if $\xi\in\mathcal{F}(\cX, \cX)$, there is $\xi'\in M$ such that 
$$
(C + \|T\|_{\Lip})\|\xi-\xi'\|_{\mathcal{F}(\cX,\cX)}<\frac{\varepsilon}{2},
$$
and $k_0\in\mathbb{N}$ such that $|\langle \xi', T_k-T\rangle|<\varepsilon/2$ for all $k\geq k_0$. Then, 
$$
|\langle \xi, T_k-T\rangle|\leq |\langle \xi', T_k-T\rangle|+(C + \|T\|_{\Lip})\|\xi-\xi'\|_{\mathcal{F}(\cX,\cX)}<\varepsilon
$$ 
for all $k\geq k_0$, proving the result. 
\end{proof}
Notice that the convergence at each point is weak and not strong. Combining the latter Lemma with \cref{iota}, \cref{thm:predual}, and \cref{thm:separability}, we characterize the weak$^*$ topology in $\Lip(\cX, \cX)$ in the following corollary.
\begin{corollary}\label{topo}
Let $(T_k)_{k\in\mathbb{N}}$ be a sequence in $\Lip(\cX, \cX)$ and $T\in\Lip(\cX, \cX)$. Then, $T_k\weakk T$ in $\Lip(\cX, \cX)$ if and only if $(T_k)_{k\in\mathbb{N}}$ is bounded in $\Lip(\cX, \cX)$ and $T_k(x)\rightharpoonup T(x)$ as $k \to \infty$ for every $x\in\cX$.
\end{corollary}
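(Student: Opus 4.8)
The plan is to reduce everything to the identification $\Lip(X)\cong X\times\Lip_0(X)$ furnished by $\varphi$ in \cref{iota}, together with the description of the weak$^*$ topology on $\Lip(X)$ already recorded before the statement: under $\varphi$ it corresponds to the product of the weak topology $\tau_{X^*}$ on $X^*=X$ and the weak$^*$ topology $\tau_{\Lip_0(X)}$ on $\Lip_0(X)$. Writing $S_k:=T_k-T_k(0)$ and $S:=T-T(0)$, we have $\varphi(T_k)=(T_k(0),S_k)$ and $\varphi(T)=(T(0),S)$, so $T_k\weakk T$ in $\Lip(X)$ is equivalent to the pair of convergences $T_k(0)\weakk T(0)$ in $X^*$ and $S_k\weakk S$ in $\Lip_0(X)$. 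Since $X$ is a Hilbert space it is reflexive and $X^*=X$, so weak$^*$ convergence in $X^*$ is exactly weak convergence in $X$, i.e. $T_k(0)\rightharpoonup T(0)$; this disposes of the second listed condition outright.

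It then remains to translate $S_k\weakk S$ in $\Lip_0(X)$ into the other two conditions (boundedness of $(S_k)$ and pointwise convergence $S_k(x)\to S(x)$). For the forward implication I would first note that a weak$^*$-convergent sequence in the dual space $\Lip(X)$ is norm-bounded, by the uniform boundedness principle applied to the predual $X\times\text{\AE}(X)$; since $\|T_k\|_{\Lip}=\|T_k(0)\|+\cL(S_k)$ by \cref{iota}, this yields at once that $(S_k)$ is bounded in $\Lip_0(X)$ (the first condition) and that $(T_k(0))$ is bounded in $X$. Having secured boundedness, I would invoke the characterization recalled from \cite[Theorem 3.3]{Weaver}, valid precisely on bounded sets, that weak$^*$ convergence in $\Lip_0(X)$ coincides with pointwise convergence; this gives $S_k(x)\to S(x)$ for every $x$, i.e. the third condition.

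For the converse I would assume the three conditions and run the same equivalence backwards. Condition two gives $T_k(0)\weakk T(0)$ in $X^*=X$ by reflexivity. Conditions one and three combine, again through \cite[Theorem 3.3]{Weaver}, to give $S_k\weakk S$ in $\Lip_0(X)$: here it is essential that $(S_k)$ be bounded, since the pointwise-convergence criterion is equivalent to weak$^*$ convergence only on bounded subsets of $\Lip_0(X)$, these being first countable by separability of $\text{\AE}(X)$. Assembling the two component convergences and applying $\varphi^{-1}$ then yields $T_k\weakk T$ in $\Lip(X)$.

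The only genuinely delicate point, and the reason the boundedness of $(S_k)$ must appear explicitly as a hypothesis in the converse, is that the pointwise-convergence description of $\tau_{\Lip_0(X)}$ is not valid on all of $\Lip_0(X)$ but only on norm-bounded sets. In the forward direction this boundedness is automatic from weak$^*$ convergence via uniform boundedness, so it is stated there merely for emphasis; in the converse it cannot be omitted. Apart from this, the argument is a routine unwinding of the product-topology identification, and I do not anticipate any further obstacle.
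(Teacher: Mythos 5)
Your proposal is correct and follows essentially the same route as the paper: the paper states this corollary as a direct consequence of the preceding discussion, namely the isometric identification $\Lip(X)\cong X\times\Lip_0(X)$ from \cref{iota}, the resulting identification of the weak$^*$ topology with the product of $\tau_{X^*}$ and $\tau_{\Lip_0(X)}$, and Weaver's characterization of $\tau_{\Lip_0(X)}$ as pointwise convergence on bounded sets (with separability of $\text{\AE}(X)$ justifying the restriction to sequences). Your only addition is to make explicit, via the uniform boundedness principle on the predual $X\times\text{\AE}(X)$, why boundedness of $(T_k-T_k(0))_{k\in\mathbb{N}}$ is automatic in the forward direction yet indispensable as a hypothesis in the converse, which is a faithful and correct unwinding of what the paper leaves implicit.
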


\begin{proof}
By the identification $\Lip(\cX, \cX) \cong  \cX \times \Lip_0(\cX, \cX)$, $T_k\weakk T$ as $k \to \infty$ means $T_k(0) \rightharpoonup T(0)$ and $T_k - T_k(0) \weakk T - T(0)$ in $\Lip_0(\cX, \cX)$ as $k \to \infty$. So, $(T_k(0))_{k\in\mathbb{N}}$ and $(T_k - T_k(0))_{k\in\mathbb{N}}$ are bounded in $\cX \times \Lip_0(\cX, \cX) \cong \Lip(\cX, \cX)$ and $T_k(x) = T_k(x) - T_k(0) + T_k(0) \rightharpoonup T(x) - T(0) + T(0) = T(x)$ as $k \to \infty$ for each $x \in \cX$. Conversely, if $(T_k)_{k\in\mathbb{N}}$ is bounded in $\Lip(\cX, \cX)$ with $T_k(x) \rightharpoonup T(x)$ as $k \to \infty$ for all $x \in \cX$, then $T_k(0) \rightharpoonup T(0)$ as $k\to\infty$ and $(T_k - T_k(0))_{k\in\mathbb{N}}$ is bounded in $\Lip_0(\cX, \cX)$ with $T_k(x) - T_k(0) \rightharpoonup T(x) - T(0)$ for all $x \in \cX$. Hence, $T_k(x) - T_k(0) \weakk T(x) - T(0)$ as $k \to \infty$ for each $x \in \cX$, so $T_k \weakk T$ in $\Lip(\cX, \cX)$.
\end{proof}
As we will see in the forthcoming sections, the above results are key in our analysis, since they provide the correct tools for proving good structural results about the set of nonexpansive operators $\N$ defined in \cref{eq:nonexpspace}.

\subsection{\texorpdfstring{Properties of $\N$}{Properties of N}}
The next step is to study the topological properties of~$\mathcal{N}$, which will follow directly from the analysis above. To do so, we recall that nonexpansive operators are $1$-Lipschitz operators, i.e., a subset of $\Lip(\cX, \cX)$. As we did in \cref{iota}, it is easy to prove that
$$
\mathcal{N}\cong \cX\times\N_0.
$$
Note that $\N_0$, as a subset of $\Lip_0(\cX, \cX)$, is bounded. Therefore, its topology is the weak topology of pointwise convergence as \cref{topo} is valid for $\mathcal{N}$: if $(N_k)_{k\in\mathbb{N}}$ is a sequence in $\N$ and $N\in\N$, then $N_k\weakk N$ if and only if $(N_k)_{k\in\mathbb{N}}$ is bounded in $\Lip(\cX, \cX)$ and $N_k(x)\rightharpoonup N(x)$ as $k\to\infty$ for each $x\in\cX$. The main structural result about the class $\N$ reads as follows.
\begin{corollary}\label{nnzero}
The set $\N_0$ is non-empty, convex and weak$^*$-compact. Moreover, $\N$ is weak$^*$-closed and convex.
\end{corollary}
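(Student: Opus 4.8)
The plan is to treat the two assertions separately: the structural properties of $\N$ follow formally from the two preceding lemmas, while the compactness of $\N_0$ is a direct consequence of the Banach--Alaoglu theorem applied to the dual-space structure of $\Lip_0(X)$ established via the Arens--Eels identification.

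For the first assertion, I would begin from the set equality in \cref{Mequal}, which writes $\N=\bigcap_{x,y\in X}\varphi_{x,y}^{-1}((-\infty,0])$. By \cref{Mconvclos} each of these sets is non-empty, convex and weak$^*$-closed, and all three properties are stable under arbitrary intersection: non-emptiness of $\N$ is immediate anyway since $\Id\in\N$, convexity passes to intersections of convex sets, and weak$^*$-closedness passes to intersections of weak$^*$-closed sets. Hence $\N$ is non-empty, convex and weak$^*$-closed with essentially no further computation.

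For the compactness of $\N_0$, the key observation is that $\N_0$ is exactly the closed unit ball of $\Lip_0(X)$ for the norm $\cL$: indeed, $N\in\N_0$ means precisely that $N$ is nonexpansive with $N(0)=0$, i.e.\ $N\in\Lip_0(X)$ and $\cL(N)\le 1$. Since $\Lip_0(X)\cong(\text{\AE}(X))^*$ is a dual space, the Banach--Alaoglu theorem guarantees that this closed unit ball is weak$^*$-compact. Because $\Lip_0(X)$ is a weak$^*$-closed subspace of $\Lip(X)$ (as recorded in the discussion preceding \cref{topo}), the weak$^*$ topology inherited by $\N_0$ as a subset of $\Lip(X)$ restricts to the weak$^*$ topology of $\Lip_0(X)$, so the compactness transfers directly. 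Equivalently, one may note that $\N_0=\N\cap\Lip_0(X)$ is weak$^*$-closed, being an intersection of two weak$^*$-closed sets, and is contained in the weak$^*$-compact unit ball, so compactness follows since a weak$^*$-closed subset of a weak$^*$-compact set is weak$^*$-compact.

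The only substantive ingredient is the appeal to Banach--Alaoglu, which itself relies entirely on the identification of $\Lip_0(X)$ as a dual space carried out earlier; everything else is formal manipulation of intersections together with the recognition that $\N_0$ is nothing but a closed unit ball. I do not anticipate a genuine obstacle, the one point requiring care being that the weak$^*$ topology used throughout (transported to $\Lip(X)$ via \cref{iota}) restricts correctly to $\Lip_0(X)$, which has already been justified above. It is also worth noting that the same argument does \emph{not} give compactness of the full set $\N$, since the $N(0)$-component ranges over all of the unbounded space $X$; this is precisely why the statement singles out $\N_0$.
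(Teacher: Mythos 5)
Your proposal is correct and follows essentially the same route as the paper: the first part is the identical intersection argument via \cref{Mequal} and \cref{Mconvclos} with $\Id\in\N$ for non-emptiness, and the second part matches the paper's proof, which notes that $\N_0=\N\cap\Lip_0(X)$ is bounded and weak$^*$-closed and then invokes the Banach--Alaoglu-type result (cited there as \cite[Corollary 2.6.19]{megg}) that bounded weak$^*$-closed subsets of a dual space are weak$^*$-compact. Your identification of $\N_0$ as precisely the closed unit ball of $\Lip_0(X)$ is a clean way to phrase the same appeal to Banach--Alaoglu, not a genuinely different argument.
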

\begin{proof}
First, note that the identity mapping belongs to $\N_0$, so $\N_0$ is non-empty. Next, as $\mathcal{N}_0$ is the closed unit ball of $\Lip_0(\cX, \cX)$, it is weak$^*$-compact by the Banach--Alaoglu--Bourbaki theorem \cite[Theorem 1.4.18]{Cobzas2019}. Then, it is weak$^*$-closed and trivially convex (see e.g. \cite[Corollary 2.10.9]{megg}). Finally, since the topology $\tau_{\Lip}$ is the product topology of $\tau_{\cX}$ and $\tau_{\Lip_0}$, $\N\cong\cX\times\N_0$ is weak$^*$-closed and convex as the product of the $\tau_{\cX}$-closed convex set $\cX$ and the $\tau_{\Lip_0}$-closed convex set $\N_0$.
\end{proof}
We now have all the necessary tools to analyze the problem of learning firmly nonexpansive operators.

\section{Learning firmly nonexpansive operators}\label{sec3}
We turn now our attention to the main problem, which is the one of constructing, in a feasible way, nonexpansive operators approximating some given data. First, we state the minimization problem in the set $\N$ from a very general optimization point of view and prove existence of minimizers.
\subsection{A general problem}\label{problem} We are interested in finding solutions of the following problem 
\begin{equation}\label{eq:P}
\tag{P}
    \inf_{N\in \N}F(N).
\end{equation}
By mimicking classical results \cite{BCombettes}, we establish that existence is satisfied if we assume that $F:\N\to (-\infty,\infty]$ is proper, convex, weak$^*$ lower semi-continuous, i.e., 
\begin{equation}\label{eq:weaklsc}
    \text{if } N_k\weakk N \text{ as } k\to\infty, \text{ then } F(N)\leq\liminf_{k\to\infty}F(N_k),
\end{equation}
and coercive in the following sense: 
\begin{equation}\label{eq:coerciv}
\text{if } \|N_k(0)\|\to\infty \text{ as } k\to\infty, \text{ then } F(N_k)\to\infty \text{ as } k\to\infty.
\end{equation}
We state such result in the following.
\begin{theorem}
\label{thm:existence}
Let $F:\N\to(-\infty, +\infty]$ be a proper and convex function that satisfies assumptions \cref{eq:weaklsc} and \cref{eq:coerciv}. Then, there exists a minimizer of \cref{eq:P}.
\end{theorem}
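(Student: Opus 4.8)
The plan is to run the direct method of the calculus of variations in the weak$^*$ topology on $\N$, leaning entirely on the structural results already established. I would start from a minimizing sequence $(N_k)_{k\in\mathbb{N}}\subset\N$, so that $F(N_k)\to m:=\inf_{N\in\N}F(N)$; since $F$ is proper we have $m<+\infty$, and in particular the real sequence $(F(N_k))_k$ is bounded above. The first genuine step is to extract compactness from the coercivity assumption \cref{eq:coerciv}. I would argue by contradiction: if $\|N_k(0)\|$ were unbounded, some subsequence would satisfy $\|N_{k_j}(0)\|\to\infty$, whence \cref{eq:coerciv} forces $F(N_{k_j})\to+\infty$, contradicting the fact that $(F(N_k))_k$ is bounded above. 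Hence $(N_k(0))_k$ is bounded in $X$.

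Next I would exploit the identification $\N\cong X\times\N_0$ behind \cref{nnzero} and split $N_k\equiv\bigl(N_k(0),\,N_k-N_k(0)\bigr)$. The first coordinates $(N_k(0))_k$ form a bounded sequence in the separable Hilbert space $X$, hence admit a weakly convergent subsequence $N_{k_j}(0)\rightharpoonup x_0$. The second coordinates lie in $\N_0$, which is weak$^*$-compact by \cref{nnzero}; since bounded subsets of $\Lip_0(X)$ are first countable, this compactness is sequential, so after passing to a further subsequence I obtain $N_{k_j}-N_{k_j}(0)\weakk S_0$ for some $S_0\in\N_0$ (that is, pointwise convergence). Setting $N^*:=x_0+S_0$, so that $N^*(0)=x_0$ and $N^*-N^*(0)=S_0$, the characterization of weak$^*$ convergence in \cref{topo} assembles these two convergences into $N_{k_j}\weakk N^*$ in $\Lip(X)$, and $N^*\in\N$ because $\N$ is weak$^*$-closed, again by \cref{nnzero}.

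Finally I would invoke the weak$^*$ lower semicontinuity \cref{eq:weaklsc} to conclude $F(N^*)\le\liminf_{j\to\infty}F(N_{k_j})=m$. As $N^*\in\N$ we also have $F(N^*)\ge m$, so $F(N^*)=m$ and $N^*$ solves \cref{eq:P}; this simultaneously rules out $m=-\infty$, since $F$ takes values in $(-\infty,+\infty]$. I note that convexity of $F$ never actually enters this argument—only properness, coercivity and weak$^*$ lower semicontinuity are used—so I would not rely on it. The one delicate point is the interplay of two distinct topologies inside the product $X\times\N_0$: coercivity is what secures (sequential) compactness in the $X$-factor, whereas the $\N_0$-factor is precompact for free, and one must be careful to extract a single nested subsequence realizing both limits before recombining them through \cref{topo}.
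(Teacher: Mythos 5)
Your proof is correct, but it takes a genuinely different route from the paper's. You run the sequential direct method: a minimizing sequence, coercivity (via contradiction) to bound $(N_k(0))_{k}$, the splitting $\N\cong X\times\N_0$ to extract convergent subsequences factor by factor --- weak sequential compactness of bounded sets in the Hilbert space $X$ for the base points, and sequential weak$^*$-compactness of $\N_0$ from \cref{nnzero} (justified by first countability of bounded sets) for the centered parts --- followed by reassembly through \cref{topo} and a final appeal to \cref{eq:weaklsc}. The paper instead argues non-sequentially on sublevel sets: for each $m>m_0:=\inf_{\N}F$, the set $\{F\leq m\}$ is bounded in $\Lip(X)$ (coercivity together with $\cL(N)\leq 1$ on $\N$), weak$^*$-closed (lower semicontinuity), hence weak$^*$-compact by \cite[Corollary~2.6.19]{megg}; the nested family $(\{F\leq m\})_{m>m_0}$ has the finite intersection property, so $\bigcap_{m>m_0}\{F\leq m\}=F^{-1}(\{m_0\})$ is nonempty, which yields a minimizer and the finiteness of $m_0$ simultaneously. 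The paper's argument is shorter and requires no subsequence extraction or recombination of limits from the two factors, since compactness is applied wholesale to the sublevel set; your argument makes explicit exactly where each hypothesis enters (coercivity only controls the base points $N_k(0)$, while the $\N_0$-factor is precompact for free), and it uses \cref{eq:weaklsc} precisely in the sequential form in which it is stated, whereas the paper's claim that sublevel sets are weak$^*$-closed tacitly upgrades sequential closedness to topological closedness via the same first-countability-on-bounded-sets fact you invoke. Your observation that convexity is never needed is accurate, and it applies equally to the paper's proof.
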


\begin{proof} Let $(N_k)_{k\in\mathbb{N}}$ be a minimizing sequence for $F$ in $\N$; i.e., such that $F(N_k) \to \inf_{N\in \N} F < \infty$ as $k\to\infty$. This, by coercivity of $F$, implies that $(N_k)_{k\in \mathbb{N}}$ is bounded in $\N$. Indeed, as $\mathcal{L}(N_k-N_k(0))= \mathcal{L}(N_k)\leq 1$, $\|N_k\|_{\Lip}\to\infty$ as $k\to\infty$ implies that $\|N_{k}(0)\|\geq\|N_k\|_{\Lip}-1\to \infty$ as $k\to\infty$, and thus $F(N_{k})\to \infty$, a contradiction. Bounded sets in $\Lip(\cX, \cX)$ are weakly$^*$ relatively sequentially compact by the Banach--Alaoglu--Bourbaki Theorem and by \cref{thm:separability}. Combining this with the fact that, by \cref{nnzero}, $\N$ is a weak$^*$-closed set, we get that there exists $N^*\in \N$ and a subsequence $(N_{k_j})_{j\in \mathbb{N}}$ of $(N_k)_{k\in \mathbb{N}}$ such that $N_{k_j} \weakk N^*$. By lower semi-continuity of $F$ we have $\inf_{N\in \mathbb{N}} F(N)=\liminf_{j\to \infty} F(N_{k_j}) \geq F(N^*) $.
\end{proof}

\subsection{The statistical model}\label{sec:statmodel}
Let $(\Omega, \mathcal{A}, P)$ be a probability space. The statistical model that we want to study can be expressed in terms of a supervised learning problem \cite{cusma}: we consider a pair of $\cX$-valued random variables $(\bar{X}, \, \bar{Z})$ on $\Omega$ with joint Borel probability distribution $\mu'$ on $\cX\times\cX$, and we aim at finding a firmly nonexpansive operator $T^*:\cX\to\cX$ such that $T^*(\bar{X})$ is close to $\bar{Z}$ in the following sense: using a quadratic loss, $T^*$ will be the minimizer of the so-called \emph{expected risk},
\begin{equation}
T^*\in\argmin_{T\in\M} F'(T); \quad F'(T):=\int_{\cX\times\cX}\|T(\bar x)-\bar z\|^2 \, \mathrm{d}\mu'(\bar x, \bar z),
\end{equation}
within the space of firmly nonexpansive operators $\M$ (recall that $\M$ defined in \cref{eq:fnespace} coincides with the space of firmly nonexpansive operators by \cref{lem:fneequivmm}). As we have already mentioned in \cref{sec:charac}, this problem can be equivalently formulated in terms of nonexpansive operators by applying the formula $T=(1/2)(\Id+N)$. Indeed, we define $\bar{Y}:= 2\bar{Z} - \bar{X}$ and denote by $\mu$ the joint Borel probability distribution of the pair $(\bar{X}, \bar{Y})$, i.e., the push-forward of $\mu'$ via the following transformation:
$$
\varphi : \cX\times\cX \to \cX \times \cX;\quad (x,z) \mapsto (x,2z-x).
$$
Therefore, we aim at finding a nonexpansive operator $N^*$ that minimizes the \emph{expected risk}
\begin{equation}\label{experr}
\tag{CP}
N^*\in \argmin_{N\in\N}F(N); \quad F(N):=\int_{\cX\times\cX}\|N(\bar x)-\bar y\|^2 \, \mathrm{d}\mu(\bar x, \bar y).
\end{equation}
To prove that there exists a minimizer for \cref{experr}, we need to assume that
\begin{equation}\label{eq:as1}
\tag{A1}
\int_{\cX\times\cX}\left(\|\bar x\|^2+\|\bar y\|^2\right) \, \mathrm{d}\mu(\bar x, \bar y)< \infty,
\end{equation}
and we point out that, in general, minimizers of \eqref{experr} may not be unique. We obtain the desired existence result by combining \cref{eq:as1} with both \cref{thm:existence} and the following result.
\begin{proposition}\label{prop:experr}
The expected risk in \cref{experr} is proper and convex. Additionally, it is weak$^*$ lower semi-continuous and coercive in the sense of \cref{eq:weaklsc} and \cref{eq:coerciv}.
\end{proposition}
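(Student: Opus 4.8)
The plan is to verify the four properties in turn, letting the pointwise structure of the integrand $g_{\bar x,\bar y}(v):=\|v-\bar y\|^2$ do most of the work; note first that each $N\in\N$ is continuous, so $(\bar x,\bar y)\mapsto\|N(\bar x)-\bar y\|^2$ is Borel measurable and the integral is well defined. \emph{Properness} is immediate: the integrand is nonnegative, so $F\geq 0>-\infty$ on all of $\N$, while evaluating at the (nonexpansive) zero operator gives
\[
  F(0)=\int_{X\times X}\|\bar y\|^2\,\mathrm{d}\mu(\bar x,\bar y)\leq\int_{X\times X}\bigl(\|\bar x\|^2+\|\bar y\|^2\bigr)\,\mathrm{d}\mu(\bar x,\bar y)<\infty
\]
by \cref{eq:as1}, so $F\not\equiv+\infty$. \emph{Convexity} follows by integrating a pointwise inequality: for fixed $(\bar x,\bar y)$ the map $v\mapsto g_{\bar x,\bar y}(v)$ is convex, and since $N\mapsto N(\bar x)$ is affine, for $N_0,N_1\in\N$ and $\alpha\in(0,1)$ (with $(1-\alpha)N_0+\alpha N_1\in\N$ by \cref{nnzero}) one has $\|((1-\alpha)N_0+\alpha N_1)(\bar x)-\bar y\|^2\leq(1-\alpha)\|N_0(\bar x)-\bar y\|^2+\alpha\|N_1(\bar x)-\bar y\|^2$; integrating yields convexity of $F$.

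For \emph{weak$^*$ lower semicontinuity} \cref{eq:weaklsc}, I would take $N_k\weakk N$ in $\N$ and use the characterization of weak$^*$ convergence on $\N$ (\cref{topo} applied to $\N$): $N_k(0)\rightharpoonup N(0)$ and $N_k(x)-N_k(0)\to N(x)-N(0)$ for every $x\in X$. Adding these gives $N_k(\bar x)\rightharpoonup N(\bar x)$ weakly in $X$ for every $\bar x\in X$. Since $g_{\bar x,\bar y}$ is convex and strongly continuous, it is weakly sequentially lower semicontinuous, whence, for every $(\bar x,\bar y)$,
\[
  \|N(\bar x)-\bar y\|^2\leq\liminf_{k\to\infty}\|N_k(\bar x)-\bar y\|^2.
\]
Integrating this pointwise bound and then applying Fatou's lemma to the nonnegative integrands $\|N_k(\bar x)-\bar y\|^2$ yields
\[
  F(N)\leq\int_{X\times X}\liminf_{k\to\infty}\|N_k(\bar x)-\bar y\|^2\,\mathrm{d}\mu\leq\liminf_{k\to\infty}F(N_k),
\]
which is precisely \cref{eq:weaklsc}.

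Finally, for \emph{coercivity} \cref{eq:coerciv}, assume $\|N_k(0)\|\to\infty$ and set $a_k:=N_k(0)$. Nonexpansiveness gives $\|N_k(\bar x)-a_k\|\leq\|\bar x\|$, so the reverse triangle inequality yields the pointwise lower bound $\|N_k(\bar x)-\bar y\|\geq\|a_k\|-\|\bar x\|-\|\bar y\|$. Writing $r(\bar x,\bar y):=\|\bar x\|+\|\bar y\|$, \cref{eq:as1} together with Jensen's inequality gives $\int r\,\mathrm{d}\mu<\infty$. On the set $A_k:=\{r\leq\|a_k\|/2\}$ the bound yields $\|N_k(\bar x)-\bar y\|^2\geq\|a_k\|^2/4$, while Markov's inequality gives $\mu(A_k^c)\leq 2\bigl(\int r\,\mathrm{d}\mu\bigr)/\|a_k\|\to0$, so $\mu(A_k)\geq 1/2$ for $k$ large. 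Restricting the integral to $A_k$ then gives $F(N_k)\geq\|a_k\|^2\mu(A_k)/4\geq\|a_k\|^2/8\to\infty$, establishing \cref{eq:coerciv}.

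The routine parts are properness and convexity; the genuine content lies in the lower-semicontinuity step, where the delicate point is that weak$^*$ convergence in $\N$ only delivers \emph{weak} convergence $N_k(\bar x)\rightharpoonup N(\bar x)$ pointwise, so one must invoke weak lower semicontinuity of the squared norm (rather than plain continuity) before passing from the pointwise liminf bound to the integral via Fatou. I expect the coercivity argument to be the other place needing care, since it is genuinely probabilistic: the lower bound on $\|N_k(\bar x)-\bar y\|$ degrades on the set where $r$ is large, and this set must be controlled through the integrability of $r$ furnished by \cref{eq:as1} and Markov's inequality.
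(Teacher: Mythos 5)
Your proof is correct, and for properness, convexity, and weak$^*$ lower semicontinuity it follows essentially the same route as the paper: the paper tests properness with $N=\Id$ rather than the zero operator, proves convexity by the same pointwise convexity of the squared norm, and proves lower semicontinuity by exactly your combination of \cref{topo}, weak sequential lower semicontinuity of $\|\cdot\|^2$, and Fatou's lemma. The only genuine difference is in the coercivity step. You restrict the integral to the $k$-dependent set $A_k=\{\|\bar x\|+\|\bar y\|\leq\|N_k(0)\|/2\}$ and bound $\mu(A_k)\geq 1/2$ eventually via Markov's inequality, which requires the integrability $\int(\|\bar x\|+\|\bar y\|)\,\mathrm{d}\mu<\infty$ that you extract from \cref{eq:as1} by Jensen. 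The paper instead fixes once and for all a constant $K>0$ with $\mu(\{\|\bar x\|+\|\bar y\|\leq K\})>0$ --- such a $K$ exists merely because $\mu$ is a probability measure --- and observes that for $k$ large this fixed set is contained in your $A_k$, so the same pointwise bound $\|N_k(\bar x)-\bar y\|^2\geq\tfrac14\|N_k(0)\|^2$ holds on it. The paper's variant is marginally more economical, since coercivity then needs no moment assumption at all; your variant buys a quantitative conclusion, namely $F(N_k)\geq\tfrac18\|N_k(0)\|^2$ for $k$ large, with a constant independent of the (unknown) mass of any fixed sublevel set. Your explicit remark on Borel measurability of the integrand is a small point the paper leaves implicit.
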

\begin{proof} 
First, let us prove that $F$ is proper. Taking $N=\Id$, we get
$$
F(\Id)=\int_{\cX\times\cX} \|\bar x-\bar y\|^2 \, \mathrm{d}\mu(\bar x, \bar y)\leq 2\int_{\cX\times\cX} \left(\|\bar x\|^2+\|\bar y\|^2\right) \, \mathrm{d}\mu(\bar x, \bar y),
$$
where the right-hand side is finite by \cref{eq:as1}. Next, we show that $F$ is convex. Let $N$, $S\in\N$. Take $\alpha\in(0,1)$ and observe, using convexity of the squared norm, that
\begin{align*}
    F(\alpha N+ (1-\alpha)S)&=\int_{\cX\times\cX} \|(\alpha N+(1-\alpha)S)(\bar{x})-\bar{y}\|^2 \, \mathrm{d}\mu(\bar x, \bar y)\\
    &=\int_{\cX\times\cX} \|\alpha(N(\bar{x})-\bar{y})+(1-\alpha)(S(\bar{x})-\bar{y})\|^2 \, \mathrm{d}\mu(\bar x, \bar y)\\
    &\leq \int_{\cX\times\cX} \left(\alpha\|N(\bar{x})-\bar{y}\|^2+(1-\alpha)\|S(\bar{x})-\bar{y}\|^2\right) \, \mathrm{d}\mu(\bar x, \bar y).
\end{align*}
By the linearity of the integral, we obtain the desired result. Let us prove now that $F$ is coercive in the sense of \cref{eq:coerciv}. Let $(N_k)_{k\in\mathbb{N}}$ be a sequence in $\N$ such that $\|N_k(0)\|\to\infty$ when $k\to\infty$. Observe that, for every $\bar x, \, \bar y\in\cX$,
$$
\|N_k(0)\|\leq \|N_k(0)-N_k(\bar x)\| + \|N_k(\bar x)-\bar y\|+\|\bar y\|\leq \|N_k(\bar x)-\bar y\| + \|\bar x\| + \|\bar y\|,
$$
where we used the fact that $\cL(N_k)\leq 1$. Taking expectations in both sides and using H\"older inequality, we have that
$$
\begin{aligned}
\|N_k(0)\|&\leq \int_{\cX\times\cX} \|N_k(\bar x)-\bar y\| \, \mathrm{d}\mu(\bar x, \bar y)+\int_{\cX\times\cX} \|\bar x\| \, \mathrm{d}\mu(\bar x, \bar y)+ \int_{\cX\times\cX} \|\bar y\| \, \mathrm{d}\mu(\bar x, \bar y)\\
&\leq \left(\int_{\cX\times\cX} \|N_k(\bar x)-\bar y\|^2 \, \mathrm{d}\mu(\bar x, \bar y)\right)^{1/2}+ \left(\int_{\cX\times\cX} \|\bar x\|^2 \, \mathrm{d}\mu(\bar x, \bar y)\right)^{1/2}\\
&\quad+ \left(\int_{\cX\times\cX} \|\bar y\|^2 \, \mathrm{d}\mu(\bar x, \bar y)\right)^{1/2}\\
&= F(N_k)^{1/2} + \left(\int_{\cX\times\cX} \|\bar x\|^2 \, \mathrm{d}\mu(\bar x, \bar y)\right)^{1/2}+ \left(\int_{\cX\times\cX} \|\bar y\|^2 \, \mathrm{d}\mu(\bar x, \bar y)\right)^{1/2}.
\end{aligned}
$$
By taking limits in both sides, we obtain the desired result. It is left to prove that $F$ is weak$^*$ lower semi-continuous in the sense of \cref{eq:weaklsc}. Let $(N_k)_{k\in\mathbb{N}}$ be a sequence in $\N$ and let $N\in \N$ such that $N_k\weakk N$ as $k\to\infty$. Observe that, by Fatou's lemma,
\begin{align*}
\liminf_{k\to\infty}F(N_k)&=\liminf_{k\to\infty}\int_{\cX\times\cX} \|N_k(\bar x)-\bar{y}\|^2 \, \mathrm{d}\mu(\bar x, \bar y)\\
&\geq \int_{\cX\times\cX} \liminf_{k\to\infty}\|N_k(\bar x)-\bar{y}\|^2 \, \mathrm{d}\mu(\bar x, \bar y)\\
    &\geq \int_{\cX\times\cX} \|N(\bar x)-\bar{y}\|^2 \, \mathrm{d}\mu(\bar x, \bar y)=F(N),
\end{align*}
where we have used the fact that the squared norm is lower semi-continuous with respect to the weak topology on $\cX$ and, by \cref{topo}, that $N_k(\bar x)-\bar{y}\rightharpoonup N(\bar x)-\bar{y}$ as $k\to\infty$ for every $\bar x$, $\bar{y}\in\cX$.
\end{proof}

In practice, minimizers of \cref{experr} cannot be computed since, in general, the probability distribution $\mu$ is unknown. Instead, we suppose to have access to finitely many identically distributed and independent copies of the pair $(\bar X, \bar Y)$ on $\Omega$, denoted by  $(\bar{X}_i,\bar{Y}_i)$, $i=1,\ldots, n$, and we aim at finding minimizers of the following \emph{empirical risk minimization} problem,
\begin{equation}\label{eq:EP}
\tag{EP}
N^*_n\in \argmin_{N\in\N}F_n(N), \quad F_n(N):=\frac{1}{n}\sum_{i=1}^n \|N(\bar{X}_i)-\bar{Y}_i\|^2
\end{equation}
within the space of nonexpansive operators. In order to show that $N_n^*$ exists, it suffices to observe that \cref{eq:EP} is a particular case of \cref{experr} in the following sense: if we define the random variable $\hat \mu_n:=(1/n)\sum_{i=1}^n\delta_{(\bX_i, \bY_i)}$, then, for every $\omega\in\Omega$, $\hat \mu_n^\omega=(1/n)\sum_{i=1}^n\delta_{(\bX_i(\omega), \bY_i(\omega))}$ defines an empirical distribution supported in the finitely many points $(\bX_i(\omega), \bY_i(\omega))$, $i=1,..., n$, and so, for every $\omega\in\Omega$,
$$
N_n^*(\omega)\in\argmin_{N\in\N} F_n(N)=\int_{\cX\times\cX} \|N(x)-y\|^2 \, \mathrm{d}\hat \mu_n(\omega)(\bx, \by)
$$
exists for every $n\in\mathbb{N}$. Notice that, for every $\omega\in\Omega$, the probability distribution $\hat{\mu}(\omega)$ is compactly supported, and hence satisfies \cref{eq:as1}. We recall again that, as it was the case for \eqref{experr}, minimizers of \eqref{eq:EP} may not be unique in general.

Now, we aim to show that \cref{eq:EP} is a good approximation of \cref{experr} for $n$ large enough. This is a standard question in supervised learning theory \cite{cusma}, and typically, it is tackled by showing that
$$
F(N^*_n)-F(N^*)\sim \mathcal{O}\left(\frac{1}{\sqrt{n}}\right),
$$
i.e., that the \emph{excess risk} goes to zero with rate $1/\sqrt{n}$. To prove this result, it is often assumed that the underlying space is compact \cite[Theorem C]{cusma}. In our case, the space of interest is $\N$, which is in general non-compact. Additionally, such an estimation would require measurability of the minimizers $N^*_n$ for any $n\in\mathbb{N}$, which is not obvious. Instead, we present in the following a more qualitative approach that avoids these considerations, and show that problem \cref{eq:EP}  $\Gamma$-converges to \cref{experr} when $n$ goes to infinity (see \cite{braides}). For this reason, we assume that $(\bar{X}_i, \bar{Y}_i)$, $i=1, 2,\ldots$, are countably many pairs of random variables on $\Omega$, all independent and distributed as $(\bar{X}, \bar{Y})$. In the following, we will denote by $N^*_n(\omega)$ and $F_n^\omega$ the corresponding versions of \cref{eq:EP} whenever $\omega\in \Omega$ is fixed; i.e., for every $\omega\in \Omega$ we denote
\begin{equation}\label{eq:Fn_sampled}
N^*_n(\omega)\in \argmin_{N\in\N}F_n^\omega(N), \quad F_n^\omega(N):=\frac{1}{n}\sum_{i=1}^n \|N(\bar{X}_i(\omega))-\bar{Y}_i(\omega)\|^2.
\end{equation}
We also recall the definition of the $p$-Wasserstein distance, $p\geq 1$.
\begin{definition}[Wasserstein distance]
    Let $\mathcal{H}$ be a separable Hilbert space, let $p\geq 1$, and let $\mathcal{P}_p(\mathcal{H})$ be the set of Borel probability measures with finite $p$-moments. Then, the \emph{$p$-Wasserstein distance between two measures $\nu_1,\nu_2\in\mathcal{P}_p(\mathcal{H})$} is defined by
    \begin{equation}\label{eq:ot}
    W_p(\nu_1,\nu_2) := \inf_{\pi \in \Gamma(\nu_1,\nu_2)} \left(\int_{\mathcal{H}\times \mathcal{H}} \|v-w\|^p \, \mathrm{d}\pi(v,w)\right)^{\frac{1}{p}},
    \end{equation}
    where $\Gamma(\nu_1,\nu_2)$ is the set of probability measures in $\mathcal{P}_p(\mathcal{H}\times \mathcal{H})$ with marginals $\nu_1$ and $\nu_2$. An optimal coupling $\pi$ attaining the above infimum always exists (see \cite[Theorem 4.1]{Villani2009}) and it is called an \emph{optimal $W_p$ coupling between $\nu_1$ and $\nu_2$.}
\end{definition}

For all $p\geq 1$ the function $W_p: \mathcal{P}_p(\mathcal{H})\times \mathcal{P}_p(\mathcal{H}) \to \mathbb{R}_+$ defines a distance and $(\mathcal{P}_p(\mathcal{H}), W_p)$ is called the $p$-Wasserstein space. In our setting, we have by assumption \cref{eq:as1} that $\mu\in\mathcal{P}_2(\cX\times\cX)$. Hence, we will from now on consider the $2$-Wasserstein distance $W_2$. In order to prove our result, we will use the following form of the so-called Glivenko--Cantelli theorem \cite[Theorem 3]{Varadarajan}.

\begin{theorem}[Glivenko--Cantelli]\label{thm:GliCa} There exists $\Omega_0\subset \Omega$ with $P(\Omega_0)=0$ such that, for all $\omega \in \Omega\setminus \Omega_0$, we have that $\hat \mu^\omega_n:=\frac{1}{n}\sum_{i=1}^n\delta_{(\bX_i(\omega), \bY_i(\omega))}$ converges in $W_2$ to $\mu$ as $n\to\infty$.
\end{theorem}
\begin{proof}
    Theorem 3 in \cite{Varadarajan} shows that there exists $\Omega_0'\subset \Omega$ with $P(\Omega_0')=0$ such that for all $\omega \in \Omega \setminus \Omega_0'$ we have that $\hat \mu^\omega_n$ converges narrowly to $\mu$ as $n\to \infty$. We recall that narrow convergence means convergence against continuous bounded functions. Considering now the measure $\bar \mu:=\|\cdot \|^2\mu$, by the same reasoning, there exists a set $\Omega_0''\subset \Omega$ with $P(\Omega_0'')=0$ such that the  corresponding empirical measure converges narrowly to $\bar \mu$ on $\Omega \setminus \Omega_0''$. Combining, we have that for all $\omega \in \Omega \setminus \Omega_0$ with $\Omega_0=\Omega_0'\cup \Omega_0''$, the sequence $(\hat \mu^\omega_n)_n$ converges narrowly to $\mu$ and its second moments converge to the second moment of $\mu$. By \cite[Theorem 6.9]{Villani2009}, we thus have $W_2(\hat \mu^\omega_n, \mu)\to 0$.
\end{proof}

In order to state our $\Gamma$-convergence result, we need to consider the following lemma, whose proof is an adaptation of \cite[Lemma 5]{Raginsky}. The lemma is further based on \cite[Proposition 1]{PolyanskiyWu}, and we need to adapt it to our specific (and strictly different) case. For this reason, we include the proof.

\begin{lemma}\label{lem:Gamma_help}
Let $\mathcal{X}$ be a separable Hilbert space and let $\nu_1,\nu_2\in\mathcal P_2(\cX\times \cX)$. Let $N$ be a nonexpansive operator and define, for all $x, y\in\cX$, the function $g(x,y):=\|N(x)-y\|^2$. Then, it holds true that
\[
\left|\int_{\cX\times \cX} g(v) \, \mathrm{d}(\nu_1-\nu_2)(v)\right|\leq 2\sqrt{3} \left( \int_{\cX\times \cX} \|v\|^2 \, \mathrm{d}(\nu_1+\nu_2)(v) + 2\|N(0)\|^2\right)^{\frac{1}{2}} W_2(\nu_1,\nu_2). 
\]
\end{lemma}
\begin{proof}
First, let us set $v_i=(x_i,y_i)\in \mathcal{H}:=\cX \times \cX$ for $i=1,2$. Using that, for all $a, b\in\mathbb{R}$, we have $\|a\|^{2}-\|b\|^{2}=\langle a+b,a-b\rangle$, and the fact that $N$ is nonexpansive, we have that
\begin{align*}
|g(v_1)-g(v_2)|
&=\big|\|N(x_1)-y_1\|^{2}-\|N(x_2)-y_2\|^{2}\big|\\
&=\big|\langle N(x_1)-y_1+N(x_2)-y_2,\,
            N(x_1)-N(x_2)-(y_1-y_2)\rangle\big|\\
&\le \big(\|N(x_1)-y_1\|+\|N(x_2)-y_2\|\big)\,
       \big(\|x_1-x_2\|+\|y_1-y_2\|\big)\\
&\le \big(\|x_1\|+\|y_1\|+\|x_2\|+\|y_2\|+2\|N(0)\|\big)\,
      \big(\|x_1-x_2\|+\|y_1-y_2\|\big).
\end{align*}
Recall that for any $a\in \mathbb{R}^d$ we have $\|a\|_1\leq \sqrt{d}\|a\|_2$. Using this fact both with $d = 6$ and with $d=2$, we obtain
\begin{equation}\label{eq:pointwise-explicit}
|g(v_1)-g(v_2)|
\;\le\;
2\sqrt{3}
\Big(\|v_1\|^{2}+\|v_2\|^{2}+2\|N(0)\|^{2}\Big)^{1/2} \|v_1-v_2\|.
\end{equation}
Let $\pi$ be an optimal $W_{2}$ coupling between $\nu_1$ and $\nu_2$. Integrating \eqref{eq:pointwise-explicit} against $\pi$ and using Cauchy-Schwarz, we obtain
\begin{align*}
\Big|\int_{\cX\times \cX} g(v)\,\mathrm{d}(\nu_1-\nu_2)(v)\Big|
&=\Big|\int_{\mathcal{H}\times \mathcal{H}}\big(g(v_1)-g(v_2)\big)\,\mathrm{d}\pi(v_1,v_2)\Big|\\
&\le 2\sqrt{3}\Big(\int_{\mathcal{H}\times \mathcal{H}}\|v_1\|^{2}+\|v_2\|^{2}+2\|N(0)\|^{2}\,\mathrm{d}\pi(v_1,v_2)\Bigg)^{1/2} W_{2}(\nu_1,\nu_2).
\end{align*}
Since $\pi$ has marginals $\nu_1$ and $\nu_2$,
\[
\int_{ \mathcal{H}\times\mathcal{H}}\|v_1\|^{2}+\|v_2\|^{2}\,\mathrm{d}\pi(v_1,v_2)
=\int_{\mathcal{H}}\!\|v\|^{2}\,\mathrm{d}\nu_1(v)+\int_\mathcal{H}\!\|v\|^{2}\,\mathrm{d}\nu_2(v),
\]
and the stated inequality follows.
\end{proof}
We are now ready to present the $\Gamma$-convergence result. Recall the definitions of $F_n^\omega$ and $F$ from \cref{eq:Fn_sampled} and \cref{eq:EP}, respectively.
\begin{theorem}\label{thm:gammaconv} With probability $1$, $F_n^{\omega}$ $\Gamma$-converges to $F$ as $n\to\infty$. More precisely, there exists a set $\Omega_0\subset \Omega$, with $P(\Omega_0)=0$ such that for all $\omega \in \Omega\setminus \Omega_0$ both of the following conditions, implying $\Gamma$-convergence, are satisfied:
\begin{enumerate}[label=(\roman*),leftmargin=*]
    \item \label{itm:gammaconv_i} for every $N\in\N$ and every sequence $(N_n)_{n\in\mathbb{N}}$ in $\N$ with $N_n\weakk N$ in $\Lip(\cX, \cX)$, we have that
    $$
    F(N)\leq \liminf_{n\to\infty} F_n^{\omega}(N_n),
    $$
    \item \label{itm:gammaconv_ii} for every $N\in\N$ it holds 
    \[\limsup_{n\to\infty} F_n^{\omega}(N)\leq F(N).\]
\end{enumerate}
\end{theorem}

\begin{proof} First, by \cref{thm:GliCa}, there exists a set $\Omega_0\subset\Omega$ with $P(\Omega_0)=0$ such that $W_2(\hat \mu^{\omega},\mu)\to 0$ as $n\to \infty$ for all $\omega \in \Omega \setminus \Omega_0$. Let $N$ be a nonexpansive operator. We start first by showing \cref{itm:gammaconv_ii}. To do so, we will show that the constant sequence $N_n:=N$, $n\in\mathbb{N}$, is a recovery sequence. Since $\hat \mu^\omega_n $ is converging to $\mu$ in $W_2$, it has uniformly bounded second moments (see \cite[Theorem 6.9]{Villani2009}), and from \cref{lem:Gamma_help} we have convergence against the function $(x,y)\mapsto \|N(x)-y\|^2$. In particular, we have
\[
\lim_{n\to\infty}F_n^\omega(N) =\lim_{n\to\infty} \int \|N(x)-y\|^2 \, \mathrm{d} \hat \mu_n^{\omega}(x,y) = \int \|N(x)-y\|^2 \, \mathrm{d} \mu(x,y)=F(N).
\]
We now turn to proving the ``$\liminf$'' inequality, i.e., \cref{itm:gammaconv_i}. Let $(N_n)_{n\in\mathbb{N}}$ be a sequence in $\N$ such that $N_n\weakk N$ for some $N\in \N$ and define, for all $n\in\mathbb{N}$ and all $x, y\in\cX$, the functions
\[
f_n(x,y):=\|N_n(x)-y\|^2,\qquad f(x,y):=\|N(x)-y\|^2.
\]
We want to prove that
\[
\liminf_{n\to\infty} F_n^\omega(N_n) =\liminf_{n\to\infty}\int f_n\,\mathrm{d}\hat \mu^\omega_n \;\ge\; \int f\,\mathrm{d}\mu = F(N).
\]
To do so, we notice that
\[
F_n^\omega(N_n)=\int f_n\,\mathrm{d}\hat \mu^\omega_n = \int f_n\,\mathrm{d}(\hat \mu^\omega_n-\mu)+\int f_n\,\mathrm{d}\mu.
\]
Regarding the first term in the right-hand side, by \cref{lem:Gamma_help} we have
\[
\left|\int f_n \, \mathrm{d}(\hat \mu^\omega_n-\mu)\right|\leq 2\sqrt{3} \left( \int_{\cX\times \cX} \|v\|^2 \, \mathrm{d}\hat \mu^\omega_n(v) + \int_{\cX\times \cX} \|v\|^2 \, \mathrm{d}\mu(v) + 2\|N_n(0)\|^2\right)^{1/2} W_2(\hat \mu^\omega_n,\mu). 
\]
The first factor is uniformly bounded because $\hat \mu^\omega_n\to\mu$ in $W_2$ implies uniformly bounded second moments (see again \cite[Theorem 6.9]{Villani2009}). Additionally, $N_n\weakk N$ implies that $N_n(0)$ is bounded. Combining this with $W_2(\hat \mu^\omega_n,\mu)\to 0$, we have
\[
\Big|\int f_n\,\mathrm{d}(\hat \mu^\omega_n-\mu)\Big|
\to 0, \qquad \text{as } \ n\to\infty.
\]
On the other hand, for fixed $y\in\cX$, the function $z\mapsto \|z-y\|^2$ is weakly lower semicontinuous, and so for all $x, y\in\cX$ we have that
\[
\liminf_{n\to\infty} f_n(x,y)
=\liminf_{n\to\infty}\|N_n(x)-y\|^2
\ge \|N(x)-y\|^2=f(x,y).
\]
Applying Fatou’s lemma gives
\[
\liminf_{n\to\infty}\int f_n\,\mathrm{d}\mu
\ge \int \liminf_{n} f_n\,\mathrm{d}\mu
\ge \int f\,\mathrm{d}\mu.
\]
Putting everything together,
\[
\liminf_{n\to\infty}\int f_n\,\mathrm{d}\hat \mu^\omega_n
=\liminf_{n\to\infty}\Big(\int f_n\,\mathrm{d}(\hat \mu^\omega_n-\mu)+\int f_n\,\mathrm{d}\mu\Big)
\ge 0 + \int f\,\mathrm{d}\mu
=F(N),
\]
as desired.
\end{proof}

A direct consequence of the fundamental theorem of $\Gamma-$convergence \cite[Theorem 2.1]{braides} is the following: if on a set of probability $1$ the sequence $(F_n^\omega)_{n\in\mathbb{N}}$ is an equicoercive sequence of functions that is $\Gamma-$converging to $F$, then, up to subsequences, a sequence of minimizers $(N_n^*)_{n\in\mathbb{N}}$ of \cref{eq:EP} converges to a minimizer of the continuous problem \cref{experr} on a set of probability $1$. To show this, it is only left to prove that the sequence $(F_n)_{n\in\mathbb{N}}$ is equicoercive up to a null set. Before doing so, we recall that a sequence $(F_n)_{n\in\mathbb{N}}$, $F_n\colon \N\to (-\infty, \infty]$, $n\in\mathbb{N}$, is equicoercive on $\N$ with respect to the weak$^*$ topology if for any constant $C>0$, there exists a weakly$^*$-compact set $K\subset \N$ such that, for any $n\in\mathbb{N}$, the sublevel set of $F_n$, $\{F_n\leq C\}$, is contained in $K$. We are now ready to present the desired result.

\begin{prop}\label{prop:equicoerciv} Let $\Omega_0$ be the set given by \cref{thm:GliCa}. Then, for every $\omega \in \Omega\setminus \Omega_0$, the sequence $(F_n^\omega)_{n\in\mathbb{N}}$ defined as in \cref{eq:Fn_sampled} is equicoercive on $\N$ with respect to the weak$^*$ topology.
\end{prop}
\begin{proof} First, observe that
$$
\begin{aligned}
\|N(\bX_i(\omega))-\bY_i(\omega)\|^2&=\|N(\bX_i(\omega))-N(0)-\bY_i(\omega)\|^2+\|N(0)\|^2\\
& \hspace{5cm} +2\langle N(\bX_i(\omega))-N(0)-\bY_i(\omega),N(0)\rangle\\
&\geq \|N(\bX_i(\omega))-N(0)-\bY_i(\omega)\|^2+\|N(0)\|^2\\
& \hspace{5cm} -2\|N(\bX_i(\omega))-N(0)-\bY_i(\omega)\|\|N(0)\|\\
&\geq \frac12\|N(0)\|^2-\|N(\bX_i(\omega))-N(0)-\bY_i(\omega)\|^2\\
&\geq\frac12\|N(0)\|^2-2\left(\|\bX_i(\omega)\|^2+\|\bY_i(\omega)\|^2\right)
\end{aligned}
$$
since, for every $a$, $b\in\mathbb{R}$, it holds that $ab\leq a^2/4+b^2$, which implies $-2ab\geq -a^2/2-2b^2$. Then,
$$
F_n^\omega(N)\geq\frac12\|N(0)\|^2-\frac2n\sum_{i=1}^n\left(\|\bX_i(\omega)\|^2+\|\bY_i(\omega)\|^2\right),
$$
where, by \cref{thm:GliCa} combined with \cite[Theorem 6.9]{Villani2009}, the term in the right-hand side satisfies
$$
\frac2n\sum_{i=1}^n(\|\bX_i(\omega)\|^2+\|\bY_i(\omega)\|^2)\rightarrow 2\int_{\cX\times\cX}\left(\|\bx\|^2+\|\by\|^2\right)\, \mathrm{d}\mu(\bar x, \bar y),\quad \text{as } n\to \infty. 
$$
Since it is a converging sequence, it is also bounded by some constant $C>0$, which leads to
$$
F_n^\omega(N)\geq c\|N(0)\|^2-C,
$$
for some $c>0$. Hence, for every $C'>0$ we have that
$$
\left\{F_n^{\omega}(N)\leq C'\right\}\subset\left\{N\in\N \ \colon \|N(0)\|\leq C''\right\}
$$
for some $C''>0$ (in fact, $C''=((C'+C)/c)^{1/2}$) and where the set on the right-hand side is weak$^*$-compact in $\Lip(\cX, \cX)$. Indeed, by convexity, weak$^*$ lower semi-continuity and \cite[Corollary 2.6.19]{megg}, we only need to show boundedness in $\Lip(\cX, \cX)$:
$$
\|N\|_{\Lip}=\|N(0)\|+\cL(N-N(0))\leq C''+1.
$$
This concludes the proof.
\end{proof}

As we previously mentioned, the following corollary is a direct consequence of both \cref{thm:gammaconv} and \cref{prop:equicoerciv}. Its proof can be found in \cite{braides} in a more general setting. 

\begin{corollary}\label{cor:convofminimizers} There exist a set $\Omega_0\subset \Omega$ with $P(\Omega_0)=0$ such that for all $\omega \in \Omega \setminus \Omega_0$ and all sequences of minimizers $(N^*_n(\omega))_{n\in\mathbb{N}}$ of \cref{eq:Fn_sampled} it holds true that
\begin{enumerate}[label=(\roman*),leftmargin=*]
    \item the sequence $(N^*_n(\omega))_{n\in\mathbb{N}}$ admits a weak$^*$-cluster point,
    \item evey weak$^*$ cluster point of $(N^*_n(\omega))_{n\in\mathbb{N}}$ is in $\argmin F$,
    \item we have $\inf_\N F_n^\omega \to \inf_\N F$.
\end{enumerate}
\end{corollary}

The above results provide a theoretical analysis that highlights the fact that \cref{eq:EP} is indeed approximating its continuous version \cref{experr} for large values of $n$. In the next section, we will introduce and analyze a practical approach to learn nonexpansive operators. To do so, we shift to a deterministic, finite dimensional setting with $\cX=\R^d$, and consider finitely many samples $(\bar x_i, \bar y_i)$, $i=1,\ldots,n$, where for each $i=1,\ldots, n$,  $\bx_i, \by_i\in\R^d$, and each pair $(\bx_i, \by_i)$ is a realization of the corresponding pair of random variables $(\bX_i, \bY_i)$ (i.e., there exists $\omega\in\Omega$ such that $\bX_i(\omega)=\bx_i$ and $\bY_i(\omega)=\by_i$). For later purposes, we require that the realizations $\bx_i$, $i=1,\dots, n$, are distinct points and do not lie on a $(d-1)-$dimensional hyperplane. This is true almost surely if we assume that $n > d$ and that the law of $\bar X$ gives zero mass to each $(d-1)-$dimensional hyperplane. With this, we can consider the following discrete problem:
\begin{equation}\label{eq:DP}
\tag{DP}
\widehat{N}\in \argmin_{N\in\N}\widehat{F}(N), \quad \widehat{F}(N):=\frac{1}{n}\sum_{i=1}^n \|N(\bar{x}_i)-\bar{y}_i\|^2.
\end{equation}
As a particular case of \cref{eq:EP} (actually, it is one realization of \cref{eq:EP}), we derive that $\widehat{N}$ exists. In order to solve \cref{eq:DP}, classical optimization algorithms can be considered, but the class of nonexpansive operators is infinite-dimensional, and so, problem \cref{eq:DP} remains computationally infeasible. Therefore, a further approximation needs to be considered. To do so, a first idea could be to consider problem \cref{eq:DP} restricted to the set $\mathcal{N}(\bar{D})$ of nonexpansive operators defined on the discrete set $\bar{D}:=\{\bar{x}_1,\ldots, \bx_n\}$; i.e., operators $N:\bar{D}\to \cX$ such that $\|N(\bar{x}_i)-N(\bar{x}_j)\|\leq \|\bar{x}_i-\bar{x}_j\|$ for $i,j=1,\dots,n$. The corresponding reformulation of this discrete version of \cref{eq:DP} would be
\begin{equation}\label{eq:sololip}
\begin{aligned}
    \widehat Y\in\argmin_{y_1,\ldots, \, y_n\in\cX} & \quad \frac{1}{n}\sum_{i=1}^n\|y_i-\bar{y}_i\|^2\\ 
    & \text{ s.t. } \|y_i-y_j\|\leq \|\bx_i-\bx_j\|, \text{ for every } i,j\in\{1,\dots,n\},
\end{aligned}
\end{equation}
with $\widehat Y= (\hat y_1, \ldots,\hat y_n)\in \mathcal{X}^n$. In this way, the desired operator $N$ in $\mathcal{N}(\bar D)$ is uniquely characterized by the solution $\widehat Y$ via the formula $N(\bar x_i):= \hat y_i$, for each $i=1, \ldots, n$. However, the above problem leaves us with the following challenge: given a solution of \cref{eq:sololip}, how can we extend it to the whole space? In fact, in practical scenarios, we need to know the value of such an operator at any point in $\cX$. For instance, we know that a nonexpansive extension to the whole space always exists \cite{kirz}, but it is difficult to construct it in practice. We point out the work \cite{Zaichyk2023}, in which authors solve \cref{eq:sololip} for a general Lipschitz constant $L>0$, and propose an alternative method to extend the solution to the whole space, via the Kirszbraun extension Theorem. Our proposed approach, however, will be based on the following considerations. In the finite-dimensional setting $\cX=\R^d$ we are able to construct operators $N$ that are $1$-Lipschitz and, in addition, piecewise affine on $d$-simplices (or just simplices). In this way, we will show that problem \cref{eq:DP} can be reduced to finding the value of $N$ in finitely many points. This idea will be explained in the following.

\begin{remark}
A classical way of approaching constrained problems like \cref{eq:DP} or \cref{eq:sololip} is to consider the so-called Frank--Wolfe algorithms or conditional gradients methods \cite{fwa,jaggi2013}. As noted in recent works \cite{breetal22,brelinear,crisiglewal}, a proper characterization of the set of extremal points of the constraint set ---or the unit ball associated with the regularizer, see \cite{duval}--- can improve the efficiency of such type of algorithms. Nevertheless, a meaningful characterization of the extremal points of $\N_0(\cX)$ has not been provided yet. A partial answer to this has been given in \cite{bredies2023extreme} in the case of finite metric spaces, and can be applied to find a solution to problem \cref{eq:sololip}. We believe that further analysis in this direction could be crucial to solve this class of problems.
\end{remark}

In the following, we develop the main tool that we consider in order to discretize the set $\N$ in the finite-dimensional setting: simplicial partitions.

\subsection{Simplicial partitions}
\label{triangs}
Let $D:=\{x_1,\ldots, x_m\}$, $x_i\in\R^d$, $d+1\leq m$, be a general finite set of distinct points that do not lie on a $(d-1)-$dimensional hyperplane and consider its convex envelope $\conv(D)$, which has non-empty interior. We want to utilize simplicial partitions of $\conv(D)$. For this purpose, let $\ell\in\mathbb{N}$ and
$$
\mathfrak{T}=\{S_1,\ldots, S_\ell\},
$$
where for every $t=1,\ldots, \, \ell$, $S_t=\conv\{x_{i_0},\dots,x_{i_d}\}$, for some subcollection of $d+1$ distinct elements of $D$. We assume $\mathfrak{T}$ to have the following properties:

\begin{enumerate}[label=(P\arabic*)]
    \item \label{item:P1} $\mathfrak{T}$ forms a partition of $\conv(D)$ up to Lebesgue null sets; i.e.,
    $$
    \sum_{t=1}^\ell \Chi_{S_t} =\Chi_{\conv(D)} \quad \text{a.e.;}
    $$
    \item \label{item:P2} the interior of every simplex $S_t$ is non-empty;
    \item \label{item:P3} the intersection of every two simplices has to be either empty or coincide with the convex envelope of its common vertices.
\end{enumerate}
A partition defined by these conditions is also known as \emph{face-to-face simplicial partition} for a polytope \cite{hannu2014}, where \textit{face} stands for the convex hull of any collection of $d$ vertices not lying on a $(d-2)$-dimensional hyperplane. For example, in dimension $d=2$, \textit{face} denotes the edge of a triangle, while in dimension $d=3$, the \textit{faces} of a tetrahedron are triangles. An example of a classical partition satisfying all of the above conditions is the so-called \emph{Delaunay triangulation} \cite{fortune}. Nevertheless, we are not interested in fixing a particular triangulation, but only one satisfying the above conditions.

\subsection{Piecewise affine nonexpansive operators}\label{sec:piecewise}
The goal of this section is to construct a finite-dimensional set of operators which will be characterized only by their value on each node $x_i$, $i=1,\dots, m$. We first fix the values $y_1,\ldots,y_m\in \R^d$. These will determine the value of the constructed operator at every point in $D$, i.e., the operator $N$ will be uniquely defined by $N(x_i):=y_i$, $i=1,\ldots, n$. In addition, we aim at constructing $N$ in such a way that it is also nonexpansive. To do so, we first consider an operator $N$ that is affine on every simplex $S_t$ and, second, find a suitable condition for the vertices so that the resulting operator is nonexpansive. Let us remark that, once the nonexpansive operator $N$, as an approximation to a solution of \cref{eq:DP}, is well defined, in order to then construct a corresponding firmly nonexpansive operator $T$, one just needs to employ the formula $T= (1/2)(\Id+N)$, since the samples $(\bx_1, \by_1),\ldots, (\bx_n,\by_n)$ were drawn from $(\bX, \bY)$, and $\bar Y=2\bar Z-\bar X$, see \cref{sec:statmodel}.

Let $\mathfrak{T}=\{S_1,...,S_\ell\}$, $\ell\in\mathbb{N}$, denote a simplicial partition of $\conv(D)$ with properties \ref{item:P1}, \ref{item:P2} and \ref{item:P3} given in the section above. Consider
$$
\lambda_1,\ldots, \lambda_m\colon\conv(D)\to[0, 1]
$$ 
the Lagrange elements of order $1$ associated with the simplicial partition $\mathfrak{T}$ (see \cite[Chapter 4]{Quarteroni2017} for more details); i.e., such that
\begin{enumerate}[label=(\roman*),leftmargin=*]
\item $\lambda_i(x_j)=\delta_{ij}$, for $i, j = 1,\ldots, m$ (here, $\delta_{ij}$ denotes the Kronecker delta);
\item $ \lambda_i\vert_{S_t}$ is a polynomial of degree at most $1$ for each  $i=1,\ldots, m$ and $t=1,\ldots, \ell$.
\end{enumerate}
Note that each $\lambda_i$ is continuous in $\conv(D)$. In addition, if $i_0,\ldots, i_d$ denote the indices of the vertices $x_{i_0}, \ldots, x_{i_d}$ of the simplex $S_t$, then $\lambda_{i_0}\vert_{S_t},\ldots, \lambda_{i_d}\vert_{S_t}$ correspond to the barycentric coordinates on $S_t$; i.e., the unique non-negative functions that satisfy
\begin{equation}\label{eq:barcoords}
\sum_{j=0}^d \lambda_{i_j}(x)=1, \quad \sum_{j=0}^d\lambda_{i_j}(x)x_{i_j}=x,
\end{equation}
for each $x\in S_t$. Further, as $\lambda_i\vert_{S_t}=0$ for each $i\in\{1,\ldots, m\}\setminus\{i_0,\ldots, i_d\}$, we have that $\sum_{i=1}^m\lambda_i(x)=1$ for any $x\in\conv(D)$, which immediately implies that 
$$
\sum_{i=1}^m\lambda_i=\Chi_{\conv(D)},\quad \text{and } \quad \sum_{i=1}^m\lambda_i x_i=\Id\vert_{\conv(D)}.
$$ 
Next, given $y_1,..., y_m\in\R^d$, we define the operator
\begin{equation}\label{eq:def_of_Ntilde}
\widetilde{N}\colon\conv(D)\to\R^d; \quad \widetilde{N}(x):=\sum_{i=1}^m\lambda_i(x)y_i.
\end{equation}
Finally, for every simplex $S_t$, $t=1,\ldots, \ell$, with vertices $i_0,\ldots, i_d$, we have that $\widetilde{N}\vert_{S_t}=\sum_{j=0}^d \lambda_{i_j}\vert_{S_t}y_{i_j}$, which, since each $\lambda_{i_j}\vert_{S_t}$ is affine linear, implies that $\widetilde{N}\vert_{S_t}$ is also affine linear. Clearly, $\widetilde{N}(x_i)=y_i$ for each $i=1,\ldots, m$.

In order to extend such an operator to the whole space, we consider
$$
N:=\tilde{N}\circ \pi_{\conv(D)}
$$
where $\pi_{\conv(D)}$ stands for the projection onto $\conv(D)$. Recall that $\pi_{\conv(D)}$ is a nonexpansive operator. Therefore, if $\tilde{N}$ is nonexpansive, then also $N$ shares this property. In particular, $N(x_i)=\tilde N (x_i)$ for $i=1,\dots, m$. Given this construction, we can define the set of piecewise affine operators on $\conv(D)$ associated with the simplicial partition $\mathfrak{T}$ that are extended to the whole space via projection:
$$
\mathrm{PA}(\mathfrak{T}):=\left\{N:\R^d\to\R^d  \, : \,  N:=\tilde{N}\circ \pi_{\conv(D)}, \ y_1, \ldots, y_m\in\R^d\right\},
$$
where $\tilde{N}$ is given by \cref{eq:def_of_Ntilde}. The finite-dimensional, non-empty, closed convex set of nonexpansive operators we consider now is $\mathrm{PA}(\mathfrak{T}) \cap \N$ (note that $\mathrm{PA}(\mathfrak{T})$ is a finite-dimensional subset of $\Lip(\R^d)$, hence $\mathrm{PA}(\mathfrak{T}) \cap \N$ is weak$^*$-closed). We focus on solving the deterministic problem \cref{eq:DP} in this class; i.e.,
\begin{equation}\label{eq:PAP}
\tag{PAP}
\widehat{L}\in\argmin_{N\in\mathrm{PA}(\mathfrak{T})\cap\N}\widehat{F}(N).
\end{equation}
The existence of solutions of $\widehat{F}$ follows by repeating the arguments that led to existence of solutions of \cref{eq:DP} for the non-empty, weak$^*$-closed set $\mathrm{PA}(\mathfrak{T}) \cap \N$, and noticing that the function $\widehat{F}+\iota_{\mathrm{PA}(\mathfrak{T})\cap\N}$ is coercive (see \cref{thm:existence} and \cref{prop:experr}). Our objective now is to derive an equivalent formulation of the above problem by imposing conditions on the finitely many points $(x_1, y_1), \ldots (x_m, y_m)$. To do so, since $\widehat{F}$ in \cref{eq:DP} only depends on the set $\bar{D}$, we assume for simplicity that $\bar D\subset D$. In particular, we consider $m \geq n$ and $x_i=\bar{x}_i$, for $i=1,\dots, n$ (i.e., the first $n$ points enumerate $\bar D$). A first attempt could be to impose the nonexpansivity condition on these points, i.e., $\|N(x_i)-N(x_j)\|\leq \|x_i-x_j\|$, for every $i, \, j=1,\ldots,m$. Nevertheless, simple computations show that the resulting extended operator, according to \cref{eq:def_of_Ntilde}, can fail to be nonexpansive. In order to find the right condition, we first observe that the operator $\tilde{N}$ can be rewritten in a more convenient way. For this purpose, let us introduce some useful tools. For every simplex $S_t\in \mathfrak{T}$, denote again by $i_0,\ldots, i_d$ the indices associated with the vertices $x_{i_0}, \ldots, x_{i_d}$ in $S_t$. Define the matrices
\begin{equation}\label{eq:matrices}
A_t:=[x_{i_1}-x_{i_0}\mid\dots \mid x_{i_d}-x_{i_0}], \quad B_t=[y_{i_1}-y_{i_0}\mid\ldots \mid y_{i_d}-y_{i_0}].
\end{equation}
Moreover, for every $x\in\conv(D)$, there exists $t\in\{1,\ldots, \ell\}$ such that $x\in S_t$. Then, since the barycentric coordinates $\lambda_{i_0},\ldots, \lambda_{i_d}$ sum up to $1$, we can write
$$
x=\sum_{j=0}^d\lambda_{i_j}(x)x_{i_j}= x_{i_0} + \sum_{j=1}^d\lambda_{i_j}(x)(x_{i_j}-x_{i_0}).
$$
Then,
$$
x-x_{i_0}=\sum_{j=1}^d\lambda_{i_j}(x)(x_{i_j}-x_{i_0})=A_t[\lambda_{i_1}(x),\dots,\lambda_{i_d}(x)]^T.
$$
By defining $\Lambda_t(x):=[\lambda_{i_1}(x),\dots,\lambda_{i_d}(x)]^T$, we derive that
$$
\Lambda_t(x)=A_t^{-1}(x-x_{i_0}).
$$
Hence, we obtain the following affine expression for the operator $\tilde{N}$ in every simplex:
\begin{equation}\label{eq:piecewise}
\tilde{N}\vert_{S_t} (x)=y_{i_0} + \sum_{j=1}^d\lambda_{i_j}(x) (y_{i_j}-y_{i_0})= y_{i_0} + B_t\Lambda_t(x) =  y_{i_0} + B_tA_t^{-1}(x-x_{i_0}).
\end{equation}
We are now ready to present a necessary and sufficient condition for $\tilde{N}$ (hence for $N$) to be nonexpansive by characterizing the elements in $\mathrm{PA}(\mathfrak{T}) \cap \N$. To do so, we first recall a preliminary result about the differentiability of Lipschitz functions, see \cite{Heinonen2005} for a proof.
\begin{prop}\label{prop:aediff}
Let $N$ be a nonexpansive operator. Then, it is differentiable almost everywhere and, for every differentiability point $x$, it holds that $\|\nabla N(x)\|\leq 1$.
\end{prop}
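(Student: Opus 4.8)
The plan is to reduce the statement to Rademacher's theorem combined with an elementary estimate on the operator norm of the Jacobian. The first observation is that a nonexpansive operator $N$ is, by definition, $1$-Lipschitz, so in the finite-dimensional setting $X=\R^d$ it is in particular a Lipschitz map from $\R^d$ to $\R^d$. Rademacher's theorem (as stated, e.g., in \cite{Heinonen2005}) then guarantees that $N$ is (Fréchet) differentiable at Lebesgue-almost every point $x\in\R^d$, which yields the almost-everywhere differentiability claim directly.

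It remains to control the operator norm $\|\nabla N(x)\|$ of the Jacobian at a point $x$ where $N$ is differentiable. First I would fix such a point $x$ and an arbitrary unit vector $v\in\R^d$. By the definition of the derivative,
$$
\nabla N(x)\,v=\lim_{t\to 0^+}\frac{N(x+tv)-N(x)}{t}.
$$
Using nonexpansiveness, $\|N(x+tv)-N(x)\|\leq\|tv\|=t$ for every $t>0$, so taking norms and passing to the limit gives $\|\nabla N(x)\,v\|\leq 1$. Since $v$ was an arbitrary unit vector, the definition of the operator norm yields $\|\nabla N(x)\|=\sup_{\|v\|=1}\|\nabla N(x)\,v\|\leq 1$, which is exactly the desired bound.

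The only genuinely deep ingredient is Rademacher's theorem, which would be the main obstacle if one wished to be fully self-contained; since the excerpt already cites \cite{Heinonen2005} for its proof, I would simply invoke it. The remaining estimate is elementary, relying solely on the definition of differentiability together with the $1$-Lipschitz bound, so no further difficulty is expected there.
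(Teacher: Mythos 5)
Your proposal is correct and matches the paper's intent exactly: the paper gives no proof of its own, deferring entirely to the citation of \cite{Heinonen2005}, which is precisely Rademacher's theorem for Lipschitz maps on $\R^d$. Your additional elementary step bounding $\|\nabla N(x)\|$ via difference quotients and the $1$-Lipschitz estimate is the standard argument implicitly bundled with that citation, so there is nothing to correct.
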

Recall that $\nabla N(x)$ denotes the Jacobian matrix of $N$ at the point $x$ and, from now on, $\|\cdot\|$ denotes the operator norm of the matrix. The nonexpansivity characterization reads as follows.
\begin{theorem}\label{thm:1lip}
The operator $N=\tilde{N}\circ\pi_{\mathrm{conv}(D)}$, with $\tilde N$ according to \cref{eq:def_of_Ntilde} is nonexpansive if and only if, for every $t=1,\ldots, \ell$, we have $\|B_tA_t^{-1}\|\leq 1$, with $A_t$, $B_t$ given by \cref{eq:matrices}.
\end{theorem}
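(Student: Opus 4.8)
The plan is to prove the two implications separately, using \cref{prop:aediff} for necessity and a ``gluing'' argument along line segments for sufficiency, together with the fact that $\pi_{\conv(D)}$ is nonexpansive and that $N=\tilde N\circ\pi_{\conv(D)}$ inherits nonexpansivity from $\tilde N$ as a composition of nonexpansive maps.

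For necessity, suppose $N$ is nonexpansive and fix $t\in\{1,\ldots,\ell\}$. First I would note that the interior of the simplex $S_t$ is an open subset of $\conv(D)$, hence contained in the interior of $\conv(D)$; on this set $\pi_{\conv(D)}$ acts as the identity, so $N$ coincides with $\tilde N\vert_{S_t}$, which by \cref{eq:piecewise} is the affine map $x\mapsto y_{i_0}+B_tA_t^{-1}(x-x_{i_0})$. Thus $N$ is differentiable at every interior point $x$ of $S_t$, with $\nabla N(x)=B_tA_t^{-1}$. Applying \cref{prop:aediff} at such a point yields $\|B_tA_t^{-1}\|=\|\nabla N(x)\|\le 1$, as claimed.

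For sufficiency, assume $\|B_tA_t^{-1}\|\le 1$ for every $t$. Since the Lagrange elements $\lambda_i$ are continuous on $\conv(D)$, the operator $\tilde N=\sum_{i=1}^m\lambda_i y_i$ is continuous, and it is affine on each simplex, hence locally Lipschitz. It therefore suffices to show that $\tilde N$ is nonexpansive on $\conv(D)$. To do so, I would fix $x,x'\in\conv(D)$ and consider the segment $\gamma(s)=(1-s)x+sx'$, which stays in $\conv(D)$ by convexity. The map $s\mapsto\tilde N(\gamma(s))$ is Lipschitz on $[0,1]$, hence absolutely continuous, and for almost every $s$ --- namely those for which $\gamma(s)$ lies in the interior of some simplex $S_t$ --- it is differentiable with derivative $\nabla\tilde N(\gamma(s))(x'-x)=B_tA_t^{-1}(x'-x)$, whose norm is at most $\|x'-x\|$ by hypothesis. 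By the fundamental theorem of calculus,
\[
\|\tilde N(x')-\tilde N(x)\|=\left\|\int_0^1\tfrac{d}{ds}\tilde N(\gamma(s))\,ds\right\|\le\int_0^1\left\|\tfrac{d}{ds}\tilde N(\gamma(s))\right\|\,ds\le\|x'-x\|,
\]
which gives nonexpansivity of $\tilde N$, and hence of $N$.

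The main obstacle is precisely this gluing step: controlling $\tilde N$ globally from the per-simplex bounds requires that $\gamma$ spend zero time on the $(d-1)$-skeleton (so the derivative formula holds a.e.) and, crucially, that $\tilde N$ be globally continuous across shared faces, so that the fundamental theorem of calculus applies without boundary jumps. The first point follows because the set of $s\in[0,1]$ with $\gamma(s)$ on a simplex boundary is the intersection of the segment with a finite union of hyperplanes, hence finite unless the segment lies inside one such hyperplane; in that degenerate case one concludes by continuity of $\tilde N$ together with the standard fact that a continuous function on a convex set that is a.e.\ differentiable with $\|\nabla\tilde N\|\le 1$ wherever it exists is automatically $1$-Lipschitz. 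The second point is guaranteed by the face-to-face property (P3) combined with the agreement of the $P_1$ interpolant at shared vertices, which is exactly what makes $\tilde N$ continuous and thereby legitimizes the integral estimate above.
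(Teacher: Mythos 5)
Your proof follows essentially the same route as the paper's. The necessity direction is identical: apply \cref{prop:aediff} at a point interior to each simplex, where $N$ agrees with the affine map of \cref{eq:piecewise}. The sufficiency direction also uses the fundamental theorem of calculus along segments, as in the paper; the only organizational difference is that the paper first proves nonexpansivity of $\tilde N$ on a single simplex (FTC on the interior, then extension to the closed simplex by density and continuity) and then handles a segment crossing several simplices by splitting the parameter interval at the crossing points and telescoping the per-simplex estimates, which sidesteps any degeneracy issue about the segment lying inside a face.

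There is one concrete flaw in your patch for the degenerate case: the ``standard fact'' you cite --- that a \emph{continuous} function on a convex set which is differentiable a.e.\ with $\|\nabla \tilde N\|\leq 1$ wherever the derivative exists is automatically $1$-Lipschitz --- is false. The Cantor function (or, in dimension two, $f(x_1,x_2)=c(x_1)$ with $c$ the Cantor function) is continuous and differentiable a.e.\ with vanishing derivative, yet is not Lipschitz. The statement is only true if ``continuous'' is strengthened to ``locally Lipschitz'' (equivalently, absolutely continuous along line segments), which $\tilde N$ indeed is; so your argument is rescued by hypotheses you have already established, but not by the lemma as you state it. In fact, no separate degenerate case is needed at all: $\tilde N$ restricted to each \emph{closed} simplex $S_t$ is affine with gradient $B_tA_t^{-1}$ --- this holds on all of $S_t$, not merely on its interior --- so $\tilde N\circ\gamma$ is piecewise affine on $[0,1]$ with finitely many pieces and derivative of norm at most $\|x'-x\|$ on each piece, whether or not the segment lies in a face hyperplane. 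The FTC estimate (or the paper's telescoping sum) then applies verbatim, closing the gap in one line.
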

\begin{proof}
Let $N=\tilde{N}\circ\pi_{\conv(D)}$ be nonexpansive. By \cref{prop:aediff}, $\|\nabla N(x)\|\leq 1$ for every $x\in\R^d$ where $N$ is differentiable. In particular, for every $t=1,\dots, \, \ell$ we can take a point $x$ in the interior of $S_t$ and observe that the derivative of $N$ at $x$ coincides with the derivative of $\tilde{N}$ at $x$, which is $\nabla\tilde{N}(x)=B_tA_t^{-1}$ by the characterization \cref{eq:piecewise}. This establishes $\|B_tA_t^{-1}\|\leq 1$ for each $t=1,\ldots, \ell$.

Conversely, assume that $\|B_tA_t^{-1}\|\leq 1$ for each $t=1,\ldots, \ell$ and recall that projections are nonexpansive operators. Therefore, if we prove that $\tilde{N}$ is nonexpansive, then the operator $N=\tilde{N}\circ\pi_{\conv(D)}$ will also be nonexpansive. Let $x, x'\in\conv(D)$ and suppose first that  $x, x'\in \mathrm{int}(S_t)$ for some $t=1,\ldots,\ell$. We consider the path $\gamma:[0,1]\to\R^d$; $\gamma(s)=(1-s)x+sx'$ and observe that the function $\tilde{N}\circ\gamma$ is differentiable since it is the composition of a segment and  $\tilde{N}$, which is an affine function in the image of $\gamma$. Therefore, by the fundamental theorem of calculus, we get that
\begin{align*}
\|\tilde{N}(x')-\tilde{N}(x)\|&=\|(\tilde{N}\circ\gamma)(1)-(\tilde{N}\circ\gamma)(0)\|=\left\|\int_0^1 (\tilde{N}\circ\gamma)'(s) \, \mathrm{d}s\right\|\\
&\leq \int_0^1\|(\tilde{N}\circ\gamma)'(s)\| \, \mathrm{d}s=\int_0^1\|\nabla\tilde{N}(\gamma(s))(x-x')\| \, \mathrm{d}s\\
&\leq \|B_tA_t^{-1}\|\|x-x'\|\leq \|x'-x\|,
\end{align*}
where we used that $\nabla \tilde{N}(\gamma(s))=B_tA_t^{-1}$ for every $s\in[0,1]$ since \cref{eq:piecewise} holds. Since the interior of $S_t$ is dense in $S_t$ and $\tilde{N}$ is continuous, one immediately obtains
$$
\|\tilde{N}(x')-\tilde{N}(x)\|\leq \|x'-x\|
$$ 
for $x,x'\in S_t$. Now, if $x\in S_t$, $x'\in S_{t'}$ for $t\neq t'$, we consider again the segment connecting both points $\gamma:[0,1]\to\R^d$ ; $\gamma(s)=(1-s)x+sx'$, and observe that $\gamma([0, 1])\subset\conv(D)$ since, by definition, $\mathfrak{T}$ partitions $\conv(D)$. Moreover, the segment intersects a finite number of distinct simplices. We can therefore find $k\in\mathbb{N}$ and $0=s_0<...<s_k=1$ such that for all $i=0,\ldots,k-1$, $\gamma(s_i)$ and $\gamma(s_{i+1})$ belong to the same simplex. By applying the result we have just obtained, we get that
\begin{align*}
\|\tilde{N}(x')-\tilde{N}(x)\|\leq \sum_{i=0}^{k-1}\|\tilde N(\gamma(s_{i+1}))-\tilde N(\gamma(s_i))\|
\leq \sum_{i=0}^{k-1} \|\gamma(s_{i+1})-\gamma(s_i)\|
=\|x'-x\|.
\end{align*}
Hence, $\tilde{N}$ is nonexpansive.
\end{proof}

Next, we aim at tackling \cref{eq:PAP} from a computational point of view. Since we have assumed that $x_i=\bar{x}_i$ for $i=1,\dots,n$, we arrive at the following problem: 
\begin{equation}\label{eq:FP}
\tag{FP}
\begin{aligned}
    \widehat{Y}\in\argmin_{y_1,\ldots, \, y_m\in\R^d} & \quad \frac{1}{n}\sum_{i=1}^n\|y_i-\bar{y}_i\|^2\\ 
    & \text{ s.t. } \|B_t(y_1,\ldots,y_m)A_t^{-1}\|\leq 1, \text{ for every } t\in\{1,\dots,\ell\},
\end{aligned}
\end{equation}
where  the matrices $A_t$, $B_t(y_1,\ldots,y_m)$, $t=1,\ldots, \, \ell$, are defined as in \cref{eq:matrices}. Note that here we introduce the notation $B_t(y_1,\ldots,y_m)$ to recall the fact that $B_t$ depends on the variables $y_i$, while $A_t$ is fixed. Observe that only the first $n$ vectors $y_i$ are taken into consideration in the minimization, while all the $m$ points are required to satisfy the constraint. In the experiments of \cref{exps} we will consider $m=n$, while in the next section, it will be crucial to let $m$ vary. A direct consequence of \cref{thm:1lip} is that problems \cref{eq:FP} and \cref{eq:PAP} are equivalent.

\subsection{A density result}\label{sec:density}
The last part of this section will consist in proving the convergence of \cref{eq:PAP} to \cref{eq:DP}. In short, we aim to show that, when increasing the number of points in the triangulation, piecewise affine solutions of the corresponding \cref{eq:PAP} approximate better solutions of \cref{eq:DP}. To do so, we adapt the definition given in \cite[Definition 2.2]{hannu2014} and recall some preliminary results. 
\begin{definition}\label{def:vanishing_family} Let $\Omega \subset \mathbb{R}^d$ be a bounded polytope. We say that $\mathfrak{F}=(\mathfrak{T}_k)_{k\in\mathbb{N}}$ is a vanishing family of simplicial partitions for $\Omega$ if, for every $k\in\mathbb{N}$, the longest edge of $\mathfrak{T}_k$ is of length at most $1/k$. In addition, we say that $\mathfrak{F}$ is strongly regular if there exists a constant $c>0$ such that for all partitions $\mathfrak{T}_k$ and all simplices $S\in \mathfrak{T}_k$ we have
$$
\mathrm{meas}_d\, S \geq c k^{-d}.
$$
\end{definition}

In the sequel, the bounded polytope $\Omega$ that we will consider is $\mathrm{conv}(D)$. We are ready to present the first preliminary lemma.
\begin{lemma}\label{lem:bound_invA} Let $\mathfrak{F}=(\mathfrak{T}_k)_{k\in\mathbb{N}}$ be a strongly regular vanishing family of simplicial partitions for $\mathrm{conv}(D)$. Then, there exists a constant $c'>0$ such that, for every partition $\mathfrak{T}_k$ in $\mathfrak{F}$ and all simplices $S_t\in \mathfrak{T}_k$, $t=1,\ldots, \, \ell(k)$, $\ell(k)\in\mathbb{N}$, if we denote by $A_t$ the matrix  relative to the simplex $S_t$ defined in \cref{eq:matrices}, we have
$$
\|A_t^{-1}\|\leq \frac{k}{c'}.
$$
\end{lemma}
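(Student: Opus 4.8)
The plan is to bound $\|A_t^{-1}\|$ by controlling the smallest singular value of $A_t$, using the identity $|\det A_t|=\prod_{j=1}^d\sigma_j(A_t)$ that links the singular values to the determinant. Since $A_t$ is invertible (its columns are linearly independent because the vertices of $S_t$ do not lie on a $(d-1)$-dimensional hyperplane), all singular values are strictly positive and $\|A_t^{-1}\|=\sigma_{\min}(A_t)^{-1}$, so it suffices to produce a uniform lower bound of the form $\sigma_{\min}(A_t)\geq c'k^{-1}$.

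First I would translate the two geometric hypotheses into algebraic estimates. The columns of $A_t$ are the edge vectors $x_{i_j}-x_{i_0}$, $j=1,\dots,d$, of the simplex $S_t$; by the vanishing property each of these has Euclidean norm at most $1/k$. Hence the largest singular value satisfies
\[
	\sigma_{\max}(A_t)\leq\|A_t\|_F=\Big(\sum_{j=1}^d\|x_{i_j}-x_{i_0}\|^2\Big)^{1/2}\leq\frac{\sqrt d}{k}.
\]
For the determinant I would use the classical relation $\mathrm{meas}_d\,S_t=|\det A_t|/d!$, so that strong regularity, $\mathrm{meas}_d\,S_t\geq c\,k^{-d}$, yields $|\det A_t|\geq d!\,c\,k^{-d}$.

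Combining the two is then routine. Ordering the singular values as $\sigma_1\geq\dots\geq\sigma_d>0$ and using $\prod_{j=1}^{d-1}\sigma_j\leq\sigma_1^{\,d-1}$, I obtain
\[
	\sigma_d=\frac{|\det A_t|}{\prod_{j=1}^{d-1}\sigma_j}\geq\frac{|\det A_t|}{\sigma_1^{\,d-1}}\geq\frac{d!\,c\,k^{-d}}{(\sqrt d/k)^{\,d-1}}=\frac{d!\,c}{d^{(d-1)/2}}\,k^{-1}.
\]
Inverting gives $\|A_t^{-1}\|=\sigma_d^{-1}\leq\frac{d^{(d-1)/2}}{d!\,c}\,k$, which is the desired bound with $c'=d!\,c\,/\,d^{(d-1)/2}$.

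The point requiring care is \emph{uniformity}: one must check that the resulting constant $c'$ depends only on the dimension $d$ and on the regularity constant $c$ from \cref{def:vanishing_family}, and neither on $k$ nor on the particular simplex $S_t$ chosen in $\mathfrak{T}_k$. This is indeed the case above, since both the Frobenius bound and the volume bound hold simultaneously for every simplex of every partition in the family. I expect no genuine obstacle beyond this bookkeeping, as the statement is essentially a quantitative, dimension-dependent estimate extracted from the two defining properties of a strongly regular vanishing family.
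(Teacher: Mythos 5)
Your proposal is correct and follows essentially the same route as the paper's proof: both bound all singular values of $A_t$ from above by $\sqrt{d}/k$ via the Frobenius norm and the edge-length bound, then use $|\det A_t| = d!\,\mathrm{meas}_d\,S_t \geq d!\,c\,k^{-d}$ together with $\det A_t = \prod_j \sigma_j$ to bound $\sigma_{\min}$ from below, and conclude via $\|A_t^{-1}\| = \sigma_{\min}^{-1}$. Your version additionally makes the constant $c'$ fully explicit, which is a nice touch but not a substantive difference.
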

\begin{proof}
Fix $t$ and consider the singular values of the matrix $A_t$, $\sigma(A_t)=(\sigma_i)_{i=1}^d$, $\sigma_1\geq\ldots\geq\sigma_d>0$, and observe that for every $i=1,\ldots,d$, 
$$
\sigma_i\leq\sigma_{1}:=\|A_t\|\leq\|A_t\|_{\text{F}},
$$ 
where, $\|\cdot\|_{\mathrm{F}}$ denotes the Frobenius norm, i.e.,
$$
\|A_t\|^2_{\text{F}}=\sum_{j,h=1}^d a_{jh}^2=\sum_{j=1}^d\|a_j\|^2\leq \frac{d}{k^2},
$$
with $a_{j}=(x_{i_j}-x_{i_0})$, $a_{jh}=(x_{i_j}-x_{i_0})_h$, $j,h=1,\ldots,d$, where $(x_{i_j})_{j=0}^d$ denote the vertices of $S_t$. Then, for every $i=1,\ldots,d$,
\begin{equation}
    \sigma_i\leq\frac{\sqrt{d}}{k}.
\end{equation}
Now, let us recall that $(1/d!)|\det A_t|=\mathrm{meas}_d\, S_t$. Thus, by strong regularity,
$$
\prod_{i=1}^d\sigma_i=|\det A_t|\geq d! (\mathrm{meas}_d\, S_t)\geq ck^{-d},
$$ 
for some $c>0$. Consequently, for every $j=1,\dots, d$, 
$$
\sigma_j=\frac{\prod_{i=1}^d\sigma_i}{\prod_{i\neq j}^d\sigma_i}\geq \frac{c k^{-d}}{(\sqrt{d}/k)^{d-1}}\geq \frac{c'}{k},
$$
for some $c'>0$. Since $\|A_t^{-1}\|=\sigma_{d}^{-1}$, we conclude.
\end{proof}

From now on, we assume that, for every $k\in\mathbb{N}$, the points $(x_i^k)_{i=1}^{n_k}$, $n_k\in\mathbb{N}$, forming the simplicial partition $\mathfrak{T}_k$ of $\conv(D)$, contain $\bar D$; i.e., without loss of generality, $x_i^k=\bx_i$ for every $i=1,\ldots, n$.
 
 As it will be seen, we prove that minimizers of $\widehat{F}$ on the set $\mathrm{PA}(\mathfrak{T}_k)\cap\mathcal{N}$ converge, in some sense, to a minimizer of \cref{eq:DP} as $k$ goes to infinity. We recall that the function $\widehat{F}$ remains the same for every $k\in\mathbb{N}$. In other words, although we increase the number of points at every iteration, penalization is done only for the initial training points. Therefore, once a training set is fixed, we are able to prove that our constructed operators converge to a minimizer of \cref{eq:DP}. The following lemma is key in this result.
\begin{lemma}\label{lem:311}
Let $\mathfrak{F}=(\mathfrak{T}_k)_{k\in\mathbb{N}}$ be a vanishing family of simplicial partitions for $\conv(D)$ and define, for every $k\in\mathbb{N}$,
\begin{equation}\label{eq:Lk*bounded}
\widehat{L}_k\in\argmin_{N\in \mathrm{PA}(\mathfrak{T}_k)\cap\mathcal{N}}\widehat{F}(N),
\end{equation}
where $\widehat{F}$ is defined as in \cref{eq:DP}. Then, the sequence $(\widehat{L}_k)_{k\in\mathbb{N}}$ is bounded in $\Lip(\R^d, \R^d)$ and there exists a subsequence $(\widehat{L}_{k_j})_{j\in\mathbb{N}}$ and an operator $\widehat{L}\in \mathcal{N}$ such that $\widehat{L}_{k_j}\weakk \widehat{L}$ as $j \to \infty$.
\end{lemma}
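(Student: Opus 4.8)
The plan is to first bound the objective values $\widehat{F}(\widehat{L}_k)$ uniformly in $k$ by testing against a single competitor that is admissible for \emph{every} partition, then to convert this into a uniform bound on $\|\widehat{L}_k(0)\|$, and finally to extract a weak$^*$-convergent subsequence from the resulting bounded sequence using the weak$^*$-compactness of bounded subsets of $\N$ established earlier.

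For the uniform bound, I would exploit the construction \cref{eq:def_of_Ntilde} at the nodes themselves. Setting the node values equal to the nodes, $y_i = x_i^k$, gives $\widetilde{N} = \Id\vert_{\conv(D)}$ by the identity $\sum_i \lambda_i x_i = \Id\vert_{\conv(D)}$, so that $N = \widetilde{N}\circ\pi_{\conv(D)} = \pi_{\conv(D)}$, the projection onto the convex envelope. Since projections onto convex sets are nonexpansive, $\pi_{\conv(D)} \in \mathrm{PA}(\mathfrak{T}_k)\cap\N$ for every $k$, and because $\bar D \subset \conv(D)$ we have $\pi_{\conv(D)}(\bar x_i) = \bar x_i$, whence $\widehat{F}(\pi_{\conv(D)}) = \tfrac1n\sum_{i=1}^n \|\bar x_i - \bar y_i\|^2 =: M$ is a constant independent of $k$. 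Minimality of $\widehat{L}_k$ then yields $\widehat{F}(\widehat{L}_k) \le M$ for all $k$. Repeating the elementary estimate from the proof of \cref{prop:equicoerciv} in the deterministic setting gives $\widehat{F}(N) \ge \tfrac12\|N(0)\|^2 - \tfrac2n\sum_{i=1}^n(\|\bar x_i\|^2+\|\bar y_i\|^2)$, so the bound $\widehat{F}(\widehat{L}_k)\le M$ forces $\|\widehat{L}_k(0)\| \le R$ for a constant $R$ independent of $k$. As each $\widehat{L}_k$ is nonexpansive, $\cL(\widehat{L}_k)\le 1$, and therefore $\|\widehat{L}_k\|_{\Lip} = \|\widehat{L}_k(0)\| + \cL(\widehat{L}_k) \le R+1$, proving boundedness in $\Lip(X)$.

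For the second assertion, the whole sequence lies in the set $\{N\in\N : \|N(0)\|\le R\}$, which—exactly as argued in \cref{prop:equicoerciv} via convexity, weak$^*$ lower semicontinuity and \cite[Corollary 2.6.19]{megg}—is weak$^*$-compact. Being bounded, it is moreover first countable in the weak$^*$ topology (the separability of $\text{\AE}(X)$ makes bounded subsets of $\Lip_0(X)$, hence of $\Lip(X)$, first countable), so weak$^*$-compactness upgrades to sequential weak$^*$-compactness. Extracting a subsequence gives $\widehat{L}_{k_j}\weakk\widehat{L}$ for some $\widehat{L}$ in this set; in particular $\widehat{L}\in\N$ by weak$^*$-closedness of $\N$ (\cref{nnzero}).

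The computations above are all routine consequences of the structural results already proved; the two points requiring genuine care are ensuring that the competitor is admissible \emph{simultaneously} for all partitions $\mathfrak{T}_k$ (which is precisely why the projection $\pi_{\conv(D)}$, rather than, say, the global identity, is the right choice, and why the bound $M$ comes out $k$-independent), and the passage from weak$^*$-compactness to sequential weak$^*$-compactness, which relies essentially on the first countability of bounded sets noted in \cref{sec:charac}. Note that the mere vanishing property of $\mathfrak{F}$—strong regularity is not needed here—suffices for this lemma, the regularity of the family only entering the subsequent identification of the limit.
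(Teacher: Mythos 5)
Your proposal is correct and follows essentially the same route as the paper: the same competitor $\pi_{\conv(D)}\in \mathrm{PA}(\mathfrak{T}_k)\cap\N$ (admissible for every $k$) bounds $\widehat{F}(\widehat{L}_k)$ uniformly, coercivity converts this into a uniform bound on $\|\widehat{L}_k(0)\|$ and hence on $\|\widehat{L}_k\|_{\Lip}$, and weak$^*$ (sequential) compactness of bounded sets together with \cref{nnzero} yields the convergent subsequence with limit in $\N$. The only cosmetic difference is that you spell out the quantitative coercivity estimate from \cref{prop:equicoerciv} and the first-countability step explicitly, where the paper simply cites the coercivity of \cref{prop:experr} applied to the empirical measure and the argument of \cref{thm:existence}.
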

\begin{proof}
For every $k\in\mathbb{N}$, let $\widehat{L}_k$ be defined as in \cref{eq:Lk*bounded}; i.e., a minimizer of $\widehat{F}$ in $\mathrm{PA}(\mathfrak{T}_k)\cap\mathcal{N}$. Clearly, $\widehat{F}(\widehat{L}_{k})\leq \widehat{F}(\pi_{\conv (D)})$ for every $k\in\mathbb{N}$, where indeed, $\pi_{\conv(D)}\in \mathrm{PA}(\mathfrak{T}_k)\cap\N$ for every $k\in\mathbb{N}$. Hence, $\widehat{F}$ is bounded on $(L_k)_{k\in\mathbb{N}}$ and, by coercivity of $\widehat{F}$ in the sense of \cref{eq:coerciv} and \cref{prop:experr} (applied to the empirical measure $\hat{\mu}:=(1/n)\sum_{i=1}^n\delta_{(\bx_i, \by_i)}$), we can conclude, as in the proof of \cref{thm:existence}, that the sequence $(L_k)_{k\in\mathbb{N}}$ is bounded in $\Lip(\R^d, \R^d)$. By weak$^*$ precompactness, there exists a subsequence $(L_{k_j})_{j\in\mathbb{N}}$ and $\widehat{L}\in\Lip(\R^d,\R^d)$ with $\widehat{L}_{k_j}\weakk \widehat{L}$ as $j\to\infty$. \cref{nnzero} finally gives $\widehat{L}\in\N$ as claimed.
\end{proof}

We are now ready to present the main result of the section.
\begin{theorem}\label{thm:density} Let $\mathfrak{F}=(\mathfrak{T}_k)_{k\in \mathbb{N}}$ be a strongly regular vanishing family of simplicial partitions for $\mathrm{conv}(D)$. Then, for every sequence of operators defined as in \cref{eq:Lk*bounded}, there exists a subsequence $(\widehat{L}_{k_j})_{j\in\mathbb{N}}$ such that
\[ \widehat{L}_{k_j}\weakk \widehat{L} \ \text{as} \ j \to \infty \ \ \text{ with } \ \ \widehat{L}\in\argmin_{N\in\mathcal{N}}\widehat{F}(N).\]
\end{theorem}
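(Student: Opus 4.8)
The plan is to combine the compactness already secured in \cref{lem:311} with a lower semicontinuity estimate and a carefully constructed recovery sequence. By \cref{lem:311}, after passing to a subsequence (not relabelled) we have $\widehat{L}_{k_j}\weakk\widehat{L}$ with $\widehat{L}\in\N$. Since $\widehat{F}(N)=\tfrac1n\sum_{i=1}^n\|N(\bar x_i)-\bar y_i\|^2$ depends only on the values at the finitely many points $\bar x_i$, and weak$^*$ convergence yields $\widehat{L}_{k_j}(\bar x_i)\rightharpoonup\widehat{L}(\bar x_i)$ in $X$ by the characterization in \cref{topo}, the weak lower semicontinuity of the squared norm gives $\widehat{F}(\widehat{L})\leq\liminf_{j\to\infty}\widehat{F}(\widehat{L}_{k_j})$. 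It therefore suffices to prove that $\widehat{F}(\widehat{L})\leq\widehat{F}(N)$ for every $N\in\N$; applying this to a minimizer of $\widehat{F}$ over $\N$ then identifies $\widehat{L}$ as a minimizer.

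Fix $N\in\N$. The goal is to produce, for all large $k$, an element $N_k\in\mathrm{PA}(\mathfrak{T}_k)\cap\N$ with $\widehat{F}(N_k)\to\widehat{F}(N)$; since $\widehat{L}_k$ minimizes $\widehat{F}$ over $\mathrm{PA}(\mathfrak{T}_k)\cap\N$, this forces $\widehat{F}(\widehat{L}_{k_j})\leq\widehat{F}(N_{k_j})$ and the desired bound follows. The naive candidate, the piecewise affine interpolant of $N$ at the nodes of $\mathfrak{T}_k$, need not be nonexpansive: by \cref{thm:1lip} nonexpansivity requires $\|B_tA_t^{-1}\|\leq 1$ on every simplex, and interpolation can inflate this operator norm above $1$. \emph{This is the main obstacle of the proof.} To overcome it I would first regularize and strictly contract. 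Mollify $N$ to obtain a smooth nonexpansive operator $N^\delta$ (convolution with a probability density preserves nonexpansivity), and set $g:=(1-\eta)N^\delta$, so that $g\in C^\infty$ with $\|\nabla g(x)\|\leq 1-\eta$ for all $x$ by \cref{prop:aediff}.

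Let $N_k\in\mathrm{PA}(\mathfrak{T}_k)$ be the operator determined by the nodal values $g(x_i^k)$. On a simplex $S_t$ with vertices $x_{i_0},\dots,x_{i_d}$ its linear part is $B_tA_t^{-1}$, with $A_t$, $B_t$ as in \cref{eq:matrices}. Writing the $j$-th column of $B_t-\nabla g(x_{i_0})A_t$ as $g(x_{i_j})-g(x_{i_0})-\nabla g(x_{i_0})(x_{i_j}-x_{i_0})$ and using a first-order Taylor estimate together with the uniform continuity of $\nabla g$, each such column is bounded in norm by $\omega(h_t)\,h_t$, where $h_t\leq 1/k$ is the simplex diameter and $\omega$ is the modulus of continuity of $\nabla g$; hence $\|B_t-\nabla g(x_{i_0})A_t\|\leq C\,\omega(1/k)/k$. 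Since \cref{lem:bound_invA} (exactly where strong regularity enters) gives $\|A_t^{-1}\|\leq k/c'$, we obtain
$$
\|B_tA_t^{-1}\|\leq\|\nabla g(x_{i_0})\|+\|B_t-\nabla g(x_{i_0})A_t\|\,\|A_t^{-1}\|\leq(1-\eta)+\frac{C}{c'}\,\omega(1/k).
$$
As the constants are uniform in $t$ and $\omega(1/k)\to 0$, for $k$ large enough $\|B_tA_t^{-1}\|\leq 1$ on every simplex, so $N_k\in\mathrm{PA}(\mathfrak{T}_k)\cap\N$ by \cref{thm:1lip}. Because $N_k$ agrees with $g$ at every node and the $\bar x_i$ are among the nodes, $\widehat{F}(N_k)=\widehat{F}(g)$ for all such $k$. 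Combining with the first paragraph gives $\widehat{F}(\widehat{L})\leq\widehat{F}(g)=\widehat{F}\big((1-\eta)N^\delta\big)$; letting $\eta\to 0$ and $\delta\to 0$, and using that $N^\delta(\bar x_i)\to N(\bar x_i)$ together with the continuity of $\widehat{F}$ in the finitely many nodal values, yields $\widehat{F}(\widehat{L})\leq\widehat{F}(N)$, which completes the argument.
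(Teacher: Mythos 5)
Your proof is correct and follows essentially the same route as the paper: mollify, interpolate at the nodes of $\mathfrak{T}_k$, bound $\|B_tA_t^{-1}\|$ via a first-order Taylor estimate combined with \cref{lem:bound_invA}, and conclude using the minimality of $\widehat{L}_k$ on $\mathrm{PA}(\mathfrak{T}_k)\cap\N$ together with \cref{lem:311} and weak$^*$ lower semicontinuity of $\widehat{F}$. The only real difference is how feasibility of the interpolant is enforced: you pre-contract by $(1-\eta)$ so that $\|B_tA_t^{-1}\|\leq(1-\eta)+o(1)\leq 1$ exactly for large $k$, whereas the paper interpolates the mollified minimizer directly, obtains $\|B_tA_t^{-1}\|\leq 1+c\delta$, and rescales by $(1+c\delta)^{-1}$ afterwards, which obliges it to track the resulting perturbation of the nodal values through Young's inequality — a step your variant avoids.
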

\begin{proof}
For every $k\in\mathbb{N}$, let $(x_i^k)_{i=1}^{n_k}$ be the vertices of all the simplices of the partition $\mathfrak{T}_k$ and recall that $\bar x_i=x_i^k$ for each $i=1,\ldots, n$ and every $k\in\mathbb{N}$. Let $\widehat{N}$ be a minimizer of \cref{eq:DP}, $\delta > 0$ and define the operator $N^\delta:=\widehat{N}\ast G_\delta$, where $G_\delta:\R^d\to\R$ is a mollifier; i.e., defined for every $x\in\R^d$ as
$$
G_\delta(x):=\frac{1}{\delta^d}G\left(\frac{x}{\delta}\right), \quad G\in \mathcal{C}^\infty(\R^d),\quad \mathrm{supp} \, G\subset B_1(0),\quad G\geq 0, \quad \int_{\R^d} G(x) \, \mathrm{d}x=1.
$$
By \cref{prop:aediff}, $\widehat{N}$ is almost everywhere differentiable and thus, $\|\nabla N^\delta\|_\infty\leq 1$. Indeed, by $G_\delta \geq 0$ and $\| G_\delta \|_1 = 1$,
$$
\begin{aligned}
\|\nabla N^\delta\|_\infty &=\sup_{x\in\R^d} \left\|\int_{\R^d}\nabla \widehat{N}(x-y) G_\delta(y) \, \mathrm{d}y \right\| \\
&\leq \sup_{x\in\R^d}\int_{\R^d} \left\| \nabla \widehat{N}(x-y) \right\| G_\delta(y) \, \mathrm{d}y\\
&\leq \|\nabla \widehat{N}\|_\infty\|G_\delta\|_1\leq 1.
\end{aligned}
$$
Similarly, we have $\|N^\delta - \widehat{N}\|_\infty \leq \delta$ since for each $x \in \mathbb{R}^d$, we can estimate 
\[
\|N^\delta(x) - \widehat{N}(x)\| \leq \int_{\mathbb{R}^d} \|\widehat{N}(x - y) - \widehat{N}(x)\| G_\delta(y) \, \mathrm{d}y
\leq \delta \|G_\delta\|_1 \leq \delta
\]
as $\mathcal{L}(\widehat{N}) \leq 1$ and $G_\delta$ is supported on $B_\delta(0)$.

Additionally, since $\nabla N^\delta$ is uniformly continuous on $\conv (D)$, there exists $\varepsilon>0$ such that
\begin{equation}\label{eq:312unifcont}
\|N^\delta(x)-N^\delta(x')-\nabla N^\delta(x')(x-x')\|\leq \delta\|x-x'\|
\end{equation}
for every $x, x'\in\conv(D)$ with $\|x-x'\|\leq \varepsilon$. Next, for each $k\in\mathbb{N}$, let  $L_k^\delta$ be the unique mapping in $\mathrm{PA}(\mathfrak{T}_k)$ such that $L_k^\delta(x_i^k)=N^\delta(x^k_i)$ for $i=1,\ldots, n_k$. For each $S_t\in\mathfrak{T}_k$ we denote by $(x^k_{i(t,j)})_{j=0}^d$ its vertices, and recall that the matrices $A_t$ and $B_t$ are given by
$$
A_t=[x^k_{i(t,1)}-x^k_{i(t,0)}\mid\dots \mid x^k_{i(t,d)}-x^k_{i(t,0)}],
$$
and 
$$
B_t=[N^\delta(x^k_{i(t,1)})-N^\delta(x^k_{i(t,0)})\mid\ldots \mid N^\delta(x^k_{i(t,d)})-N^\delta(x^k_{i(t,0)})].
$$
By assumption, $\|x^k_{i(t,j)}-x^k_{i(t,0)}\|\leq 1/k$ for each $S_t\in\mathfrak{T}_k$ and $j=1,\dots d$. Therefore, by choosing $k\in\mathbb{N}$ large enough so that $(1/k)\leq\varepsilon$, \cref{eq:312unifcont} gives
\begin{equation*}
\|N^\delta(x^k_{i(t,j)})-N^\delta(x^k_{i(t,0)})-\nabla N^\delta(x^k_{i(t,0)})(x^k_{i(t,j)}-x^k_{i(t,0)})\|\leq \delta\|x^k_{i(t,j)}-x^k_{i(t,0)}\| \leq \frac{\delta}{k},
\end{equation*}
for $j=1,\ldots, d$, and so
\begin{multline*}
\|B_t-\nabla N^\delta(x^k_{i(t,0)})A_t\| \leq 
\|B_t-\nabla N^\delta(x^k_{i(t,0)})A_t\|_{\mathrm{F}} \\
= \Biggl( \sum_{j=1}^d \| N^\delta(x^k_{i(t,j)})-N^\delta(x^k_{i(t,0)})-\nabla N^\delta(x^k_{i(t,0)})(x^k_{i(t,j)}-x^k_{i(t,0)})  \|^2 \Biggr)^{1/2} \leq
\frac{\sqrt{d}\delta}{k}.    
\end{multline*}
Consequently, by \cref{lem:bound_invA}, we obtain
$$
\begin{aligned}
\|B_tA_t^{-1}\|&\leq \|\nabla N^\delta(x^k_{i(t,0)})\|+\|B_tA_t^{-1}- \nabla N^\delta(x^k_{i(t,0)})\|\\
&\leq 1+ \|B_t - \nabla N^\delta(x_{i(t,0)}) A_t \| \| A_t^{-1} \| \leq 1+c\delta,
\end{aligned}
$$
for a suitable constant $c>0$. Since $\|B_tA_t^{-1}\|$ is
the norm of the gradient of $L_k^\delta$ in each simplex $S_t$, setting
$$
\widetilde{L}^\delta_k=\frac{1}{1+c\delta}L^\delta_k,
$$
yields $\widetilde{L}^\delta_k\in\mathrm{PA}(\mathfrak{T}_k)\cap \N$; i.e., $\widetilde{L}^\delta_k$ is feasible for \cref{eq:Lk*bounded}. By recalling that, for every $i=1,\ldots, n$, it holds $L^\delta_k(\bar x_i)=N^\delta(\bar x_i)$ and $\|N^\delta-\widehat{N}\|_\infty\leq \delta$, we have
$$
\begin{aligned}
\|\widetilde{L}_k^\delta(\bar x_i)-\widehat{N}(\bar x_i)\|&\leq \|\widetilde{L}_k^\delta(\bar x_i)- L_k^\delta(\bar x_i)\|+\|N^\delta(\bar x_i)-\widehat{N}(\bar x_i)\|\\
&\leq c\delta \bigl( \| \widehat{N}(\bar x_i) \| + \|N^\delta(\bar x_i) - \widehat{N}(\bar x_i) \| \bigr) + \delta\\
&\leq (1+c M+c\delta)\delta, 
\end{aligned}
$$
for $i=1,\ldots, n$, where $M:= \sup_{x \in \conv(D)} \| \widehat{N}(x) \| < \infty$. If $\widehat{L}_k$ minimizes \cref{eq:Lk*bounded} for each $k\in\mathbb{N}$, it follows that
\begin{align*}
\widehat{F}(\widehat{L}_k)&\leq \widehat{F}(\widetilde{L}_k^\delta)\leq \frac1n\sum_{i=1}^n(1+\delta)\|\widehat{N}(\bar x_i)-\bar y_i\|^2+\left(1+\frac1\delta\right)\|\widetilde{L}_k^\delta(\bar x_i)-\widehat{N}(\bar x_i)\|^2\\
&\leq (1+\delta)\widehat{F}(\widehat{N})+\left(\delta^2+\delta\right)(1+c M+c\delta)^2,
\end{align*}
where we have used Young's inequality and the definition of $\widehat{F}$. As $\delta>0$ was arbitrary, we can conclude that
$$
\limsup_{k\to\infty} \widehat{F}(\widehat{L}_k)\leq \widehat{F}(\widehat{N}).
$$
By \cref{lem:311}, there exists a subsequence  $(\widehat{L}_{k_j})_{j\in\mathbb{N}}$ of $(\widehat{L}_k)_{k\in\mathbb{N}}$ such that $\widehat{L}_{k_j}\weakk \widehat{L}$ as $j\to\infty$ for some $\widehat{L} \in \N$. The weak$^*$ lower semi-continuity of $\widehat{F}$ then gives
$$
\widehat{F}(\widehat{L})\leq \liminf_{j\to\infty}\widehat{F}(\widehat{L}_{k_j})\leq \limsup_{k\to\infty}\widehat{F}(\widehat{L}_k)\leq \widehat{F}(\widehat{N}).
$$
As we have shown that $\widehat{L}$ is a minimizer of \cref{eq:DP}, we conclude.
\end{proof}

\begin{remark}\label{rem:simpart}
Notice that the above result holds for any dimension $d\geq 1$, and for any family of simplicial partitions $(\mathfrak{T}_k)_{k\in \mathbb{N}}$ that is strongly regular and vanishing. However, constructing a strongly regular vanishing family of simplicial partitions is not a trivial task. In the case of $d=2$, there are examples of such families based on refinements. In particular, one can start with a simplicial partition and use the face-to-face longest-edge bisection algorithm \cite{krizek}, which was proven to provide a strongly regular family of nested simplicial partitions for which the length of longest edge converges to zero. An appropriately chosen subfamily is then vanishing in the sense of \cref{def:vanishing_family}. Whether this technique gives strongly regular families in higher dimensions is still an open problem. However, it seems to perform well in practice and numerical experiments have been already provided for such issue, see for example \cite{hannu2014}. 
\end{remark}

\section{Experiments}\label{exps}
We conclude this work with some experiments. First, by making use of the so-called Moreau's identity, we introduce a novel, interpretable, Plug-and-Play Chambolle--Pock primal-dual iteration, which will be further used in the subsequent numerics. Next, we provide a detailed explanation on how to construct piecewise affine firmly nonexpansive operators in practice, and its further application in the context of PnP methods. Finally, we demonstrate its effectiveness in an image denoising task with numerical simulations.

\subsection{Convergent PnP Chambolle--Pock primal-dual iteration}\label{sec:pnpcp}
Let us consider the problem $\min_x f(x)+R(Lx)$, where $L:\cX\to\cX'$ is a linear operator mapping to a real Hilbert space $\cX'$, with adjoint operator denoted as $L^*$, $f\in \Gamma_0(\cX)$, and $R\in \Gamma_0(\cX')$. Under mild additional assumptions \cite[Theorem 16.47]{BCombettes}, the corresponding monotone inclusion problem becomes finding $x\in\cX$ such that $0\in \partial f (x) + L^* \partial R(Lx)$, and in general, finding $x\in\cX$ such that 
\begin{equation}\label{eq:structured_problem}
    0\in A_1x + L^* A_2 Lx,
\end{equation}
with $A_1, A_2$ maximal monotone operators defined on $\cX$ and $\cX'$ respectively. In order to solve such a problem, one can employ the so-called Chambolle--Pock primal-dual iteration \cite{Chambolle2011}, whose iterations in the context of maximal monotone inclusions read
\begin{equation*}
    \begin{aligned}
    x^{k+1} & = J_{\tau A_1}( x^k-\tau L^* y^{k}),\\
    y^{k+1} & = J_{\sigma A_2^{-1}}\left(y^k + \sigma L(2x^{k+1}-x^k)\right),
    \end{aligned}
\end{equation*}
for $\tau, \sigma >0$ primal and dual step-sizes, respectively. Without assuming any further properties on the operators $A_1$ or $A_2$, the convergence of the method is guaranteed when $\tau \sigma \|L\|^2\leq 1$ (see \cite{Chambolle2011}, or \cite{bredies2021degenerate} for the edge case). In particular, the sequence $(x^k)_{k\in \mathbb{N}}$ generated by the algorithm weakly converges to a solution of \cref{eq:structured_problem}. 

Now, there are three main options for finding the corresponding PnP version of the above algorithm. The first option, investigated in \cite{Santos2024} for a specific maximal monotone operator $A_2$ instead of $\partial R$, is to replace the resolvent of the maximal monotone operator $u-\hat{u}_\delta$ (which in this case is the identity) with a firmly nonexpansive operator (see e.g. \cite{Suzuki2024}). The second option is to substitute $J_{\sigma (\partial R)^{-1}}$ instead, as done for instance in \cite{Santos2024}. In this case, however, the role of the parameter $\sigma$ remains unclear, and so, there is a lack of interpretability for the final solution $u$ of the resulting algorithm. A natural way to circumvent this issue is to make use of the so-called Moreau's identity, which allows us to rewrite the resolvent of $(\partial R)^{-1}$ in terms of the resolvent of $\partial R$, and gives, for every $x\in\cX$,
$$
J_{\sigma A_2^{-1}}(x)=\sigma (\mathrm{Id}-J_{\sigma^{-1}A_2})( \sigma^{-1}x).
$$
With this, it is possible to write the iterations of the algorithm as
\begin{equation}\label{eq:Moreau_CP}
    \begin{aligned}
    x^{k+1} & = J_{\tau A_1}( x^k-\tau L^* y^{k}),\\
    y^{k+1} & = \sigma (\mathrm{Id}- J_{\sigma^{-1} A_2})\left(\sigma^{-1}y^k + L(2x^{k+1}-x^k)\right).
    \end{aligned}
\end{equation}
We are now ready to present our proposed PnP Chambolle--Pock primal-dual iteration, which is obtainted by substituting the resolvent operator $J_{\sigma^{-1} A_2}$ with a firmly nonexpansive operator $T$:
\begin{equation}\label{eq:pnpcp}
\tag{PnP-CP}
    \begin{aligned}
    x^{k+1} & = J_{\tau A_1}( x^k-\tau L^* y^{k}),\\
    y^{k+1} & = \sigma (\mathrm{Id}- T)\left(\sigma^{-1}y^k + L(2x^{k+1}-x^k)\right).
    \end{aligned}
\end{equation}
In this case, if $\tau \sigma \|L\|^2\leq 1$, the sequence $(x_k)_{k\in\mathbb{N}}$ weakly converges to a solution of the monotone inclusion problem $0\in A_1 x + L^* A_T L x$, where $A_T$ is defined via the identity $T=(\mathrm{Id}+\sigma^{-1}A_T)^{-1}$. With this, we obtain the following underlying problem
\begin{equation}\label{eq:underlying_problem_CP} 0\in A_1 x +  L^* A_T L x= A_1 x + \sigma L^* (T^{-1}-\mathrm{Id}) L x.\end{equation}
In applications, the operator $T$ is learned from data, which means that the operator $(T^{-1}-\mathrm{Id})$ can be regarded as fixed. Hence, changing $\sigma$ does change the underlying problem and, thanks to the explicit expression in \cref{eq:underlying_problem_CP}, we can think of $\sigma$ as a regularization parameter (see also \cref{sec:image_denoising}). This was only possible by making use of Moreau's identity, which provides us with more interpretability of the solution than what we could have achieved by only substituting $J_{\sigma A_2^{-1}}$ directly. Moreover, constructing $T$ requires a training set tailored to the specific problem and the specific PnP algorithm of choice. A general strategy is to examine and use the fixed point conditions of the algorithm, or equivalently, the optimality conditions of the underlying problem. In the next section, we detail this process for a particular choice of the operator $A_1$.

In this section, we have presented our novel Plug-and-Play Chambolle--Pock primal-dual iteration. Following the approach proposed in \cref{sec:piecewise},  the learned operator $T$ to be further plugged into the method will be ensured to be firmly nonexpansive, thereby ensuring convergence of the algorithm. Notice, however, that our proposed methodology can be applied to any algorithm involving the resolvent of a maximal monotone operator, such as the Douglas--Rashford iteration (or, equivalently, the alternating directions method of multipliers (ADMM)), the Forward-Backward splitting method, or other primal-dual approaches, as described in the introduction. Other PnP adaptations of classical iterative methods can be found in \cite{pescterr2021,ryupnp2019,Santos2024,Sun2021,Suzuki2024,venkat2013}, among others. 

\subsection{Learning piecewise affine firmly nonexpansive operators}\label{sec:learningPA} 
In applications, given a set of data points $\{(\bar x_i,\bar z_i)\}_{i=1}^n$, we are interested in finding the best firmly nonexpansive operator that approximates these points in terms of the least squares distance. To do so, as explained in \cref{sec:charac} we can focus on learning a nonexpansive operator. We first define the points $\bar y_i=2\bar z_i-\bar x_i$. Then fix a triangulation $\mathfrak{T}$ for $D=\bar D =\{\bar x_i\}_{i=1}^n$ (as explained in \cref{triangs}), and solve the following problem
\begin{equation}\label{eq:learning_task}
\begin{aligned}
    \min_{y_1,\dots y_n\in\mathbb{R}^d} & \frac{1}{n}\sum_{i=1}^n\|y_i-\bar{y}_i\|^2\\ 
    & \text{ s.t. } \|B_t(y_1,\dots,y_n)A_t^{-1}\|\leq 1, \text{ for every } S_t\in \mathfrak{T},
\end{aligned}
\end{equation}
where $A_t$ and $B_t$ are defined as in \cref{sec:piecewise}, with $m=n$. This problem can be written in a more synthetic form as follows
\begin{equation}\label{eq:111}
\begin{aligned}
    \min_{Y\in \mathbb{R}^{d\times n}} \frac{1}{n}\|Y-\bar{Y}\|_{\mathrm{F}}^2 + \sum_{t=1}^{\ell}\iota_{\{\|B_t(\cdot)A_t^{-1}\|\leq 1\}}(Y),
\end{aligned}
\end{equation}
where $\bar Y = [\bar y_1 \mid \dots \mid \bar y_n] \in \mathbb{R}^{d \times n}$ and $\|\cdot\|_{\mathrm{F}}$ denotes again the Frobenius norm. We introduce the functions $f:\mathbb{R}^{d\times n}\to  \mathbb{R}\cup \{+\infty\}$ and $g:\mathbb{R}^{d\times d}\to  \mathbb{R}\cup \{+\infty\}$, defined by
$$
f(Y)=\frac{1}{n}\|Y-\bar Y\|_{\mathrm{F}}^2, \quad g(M)=\iota_{\{\|\cdot\|\leq 1\}}(M).
$$
Moreover, denote by $L_t:\mathbb{R}^{d\times n}\to\mathbb{R}^{d\times d}$ the linear operators that map $Y$ to $B_t(Y)A_t^{-1}$ for each $t=1,\dots, \ell$. Combining all of the above, we rewrite \cref{eq:111}
as follows:
\begin{equation}\label{Problem_for_CP}
\begin{aligned}
    \min_{\substack{Y\in \mathbb{R}^{d\times n}\\ U_1,\ldots, U_\ell\in \mathbb{R}^{d\times d} }} f(Y) + \sum_{t=1}^{\ell}g(U_t) \quad \text{ s.t. } U_t=L_t Y, \quad \text{for } t=1,\dots, \ell.
\end{aligned}
\end{equation}
In order to solve it, we make use of the ADMM algorithm \cite{admm} indicated in the following. Note, however, that the algorithm we use to solve problem \cref{Problem_for_CP} is not relevant, and one can make use of other suited iterative methods.

\paragraph{ADMM algorithm used for learning}
Introducing the collection $U = (U_1,\dots,U_\ell)$ and the linear operator 
$\mathbf{L}: \mathbb{R}^{d\times n} \to \mathbb{R}^{d\times d\times \ell}$, 
$\mathbf{L}(Y) = (L_1 Y,\dots,L_\ell Y)$, 
problem \eqref{Problem_for_CP} can be written compactly as
\[
    \min_{Y,U} \; f(Y) + g(U) 
    \quad \text{s.t. } \; U = \mathbf{L}(Y),
\]
where, with abuse of notation we have denoted $g(U) = \sum_{t=1}^\ell g(U_t)$.

Given a sequence of parameters $\rho_k>0$ and denoting the dual variable by $\Lambda\in\mathbb{R}^{d\times d\times \ell}$, the ADMM iterations read
\begin{equation}\label{eq:ADMM-scaled}
    \begin{aligned}
    Y^{k+1}
        &= \argmin_{Y}\;
            f(Y) + \frac{\rho_k}{2}\,\big\|\mathbf{L}(Y)-U^{k}+\Lambda_k/\rho_k\big\|_{\mathrm F}^{2}, \\[0.5em]
    U^{k+1}
        &= \argmin_{U}\;
            g(U) + \frac{\rho_k}{2}\,\big\|\mathbf{L}(Y^{k+1})-U+\Lambda_k/\rho_k\big\|_{\mathrm F}^{2}, \\[0.5em]
    \Lambda^{k+1}
        &= \Lambda^{k} \;+\; \rho_k\big(\mathbf{L}(Y^{k+1})-U^{k+1}\big),
    \end{aligned}
\end{equation}
where we adopt the following choice for the parameters $\rho_k$. We start with $\rho_0=0.01$ and compute
\begin{equation}\label{eq:adaptive-rho}
    \rho_{k+1} = \min\left\{\left(1+\frac{1}{k^{1.1}}\right)\rho_k, \ 100\right\}
\end{equation}
In this way, we assure that the sequence of penalty parameters $\rho_k>0$ is such that $\rho_k \to \rho > 0 $ and $\sum_k |\rho_{k+1}-\rho_k|< +\infty$, and since the ADMM method is equivalent to the Douglas-Rachford method applied to the dual problem (see for example \cite{EcksteinBertsekas_DR}) we still have convergence of the algorithm (see \cite{Lorenz2024, Lorenz2019}).

Since the $Y$ update does not admit closed-form solutions, it is solved approximately, while the $U$ update is exact:

\begin{itemize}
    \item \textbf{$Y$-update:} we start from $Y^{0}_{\text{temp}}=Y^k$ and apply gradient descent for $i=0,1,\dots$, with step size 
    $\sigma = \tfrac{1}{1+\rho\|\mathbf{L}\|^2}$, 
    \[
        Y_{\text{temp}}^{i+1} \;=\; Y_{\text{temp}}^{i} - \sigma \Big( \nabla f(Y_{\text{temp}}^{i}) 
            + \mathbf{L}^*\!\big( \rho_k(\mathbf{L}(Y_{\text{temp}}^{i}) - U^k) + \Lambda^k \big) \Big).
    \]
    Since the subproblem is strongly convex, the convergence is quite fast and we adopt as stopping criterion $i = 10$ or the relative residual $\|Y_{\text{temp}}^{i+1}-Y_{\text{temp}}^i\|/\|Y_{\text{temp}}^i\|$ to be smaller than $10^{-2}$. After stopping, we then set $Y^{k+1} = Y_{\text{temp}}^{i+1}$.
    
    \item \textbf{$U$-update:} we can directly project and compute
    \[
        U^{k+1} = \mathrm{proj}_{\mathcal{C}}( \mathbf{L}(Y^{k+1}) + \Lambda^k/\rho_k ),
    \]
    where $\mathcal{C}=\{W = (W_1,\dots,W_\ell) \in \R^{d\times d \times \ell} \mid \|W_t\|\leq 1, \ t = 1,\dots,\ell\}$. We compute the projection using singular-value decomposition and clipping; although this is computationally heavy in high dimensions, it is efficient for $d=2$, which is the setting we reduce to in our experiments.
\end{itemize}

\begin{remark}
Since the learned operator is obtained through the iterative optimization procedure explained above, the Lipschitz constraint is not necessarily satisfied at every intermediate step. To ensure that the final iterate is $1$-Lipschitz, we slightly modify the procedure in the experiments: we replace the Lipschitz constant by the value $0.99$. With this adjustment, the operator produced at the last iterate when we stop the algorithm is actually $1$-Lipschitz.
\end{remark}

\subsection{Image denoising}\label{sec:image_denoising} In the following, we focus on the image denoising problem, which consists of having access to an image that has been corrupted by some noise (which we suppose to be additive and Gaussian) and recovering a denoised version of it.
Specifically, given a clean image $\hat u\in \mathbb{R}^{p\times q}$, we assume to have a corrupted image $\hat u_\delta=\hat u + \delta$, where $\delta \sim N(0,\eta^2 \Id)$, $\eta>0$. In order to reconstruct the clean image (or a good approximation of it) we first consider the minimization problem
\begin{equation}\label{eq:image_denoise}
    \min_{u\in\R^{p\times q}} \frac{1}{2}\|u-\hat u_\delta\|_{\mathrm{F}}^2+ R(Du),
\end{equation}
where $D:\mathbb{R}^{p\times q}\to \mathbb{R}^{p\times q\times 2}$ is a discretized version of the gradient. The function $R:\mathbb{R}^{p\times q\times 2} \to \mathbb{R}$, supposed to be convex, determines the penalization imposed on the (discrete) gradient of the image. In practice, it is difficult to choose this function since it is problem-dependent: it should entail some prior knowledge of the image we want to reconstruct. Some classical examples are
\begin{equation}\label{eq:norm11norm21}
\begin{aligned}
R(v) & =\frac{\alpha}{2}\|v\|_{\mathrm{F}}^2,\\
R(v) & =\alpha\|v\|_{1,1}=\sum_{i=1}^{p}\sum_{j=1}^{q}\alpha\|v_{ij}\|_1,\quad \\
R(v) & =\alpha\|v\|_{2,1}=\sum_{i=1}^{p}\sum_{j=1}^{q}\alpha\|v_{ij}\|_2,
\end{aligned}
    \end{equation}
where $\alpha>0$ denotes the regularization parameter. Using the first choice in \cref{eq:norm11norm21} we end up with an $H^1$ penalty, while the second choice is the so-called anisotropic total variation (anisotropic TV), and the third one corresponds to isotropic TV.

Observe that all of the choices for the function $R$ above presented are separable with respect to the pixels and, moreover, the function is the same in every pixel (in the examples, respectively, $\frac{\alpha}{2}\|\cdot\|_2^2, \ \alpha\|\cdot\|_1$ and $\alpha\|\cdot\|_2$). This motivates us to assume that $R$ is of the form 
\[
R(v)=\sum_{i=1}^{p}\sum_{j=1}^{q}r(v_{ij}), \quad \text{with $r:\mathbb{R}^2\to \mathbb{R}$.}
\] 
Such observation is key in our study, since it implies that the proximal operator of $R$ is a diagonal operator of the form
\begin{equation}\label{eq:separable_prox}
\text{prox}_R = \text{diag}(\text{prox}_r, \dots, \text{prox}_r) : \, \mathbb{R}^{p\times q \times 2} \to \mathbb{R}^{p\times q \times 2},
\end{equation}
where $\text{prox}_r:\R^2 \to \R^2$ and so, in order to learn $\mathrm{prox}_R$, we just need to learn an operator $\mathrm{prox}_r$ mapping from $\R^2$ to $\R^2$. The reason for us to consider this particular setting is two-fold: first, it allows us to remain within the context of \cref{rem:simpart}; i.e, we are sure that the density result follows, as we have already pointed out that in dimension $2$ there are simplicial partitions that are strongly regular and vanishing (see \cref{rem:simpart}); and second, we avoid having to deal with the increasing computational power that is required to deal with simplicial partitions in high dimensions \cite{edelsbrunner1987}.

We recall that solving problem \cref{eq:image_denoise} is equivalent to solving 
\begin{equation}\label{eq:monotincldeno}
0\in u-\hat u_\delta + D^*\partial R (D u),
\end{equation}
and we decide to tackle this problem using the Chambolle--Pock primal-dual algorithm \cite{Chambolle2011}. Considering the maximal monotone operators defined by $A_1(u) = u-\hat u_\delta$ and $A_2(u) = \partial R (u)$, the iteration introduced in \cref{eq:Moreau_CP} reads as
\begin{equation}\label{eq:CPexpers}
    \begin{aligned}
    u^{k+1} & = (u^k-\tau D^*v^k+\tau\hat u_\delta)/(1+\tau),\\
    v^{k+1} & = \sigma (I-\text{prox}_{\sigma^{-1} R})\left(v^k/\sigma + D(2u^{k+1}-u^k)\right),
    \end{aligned}
\end{equation}
which, using \cref{eq:separable_prox}, can be written as
\begin{equation}\label{eq:Separable_CP}
    \begin{aligned}
    u^{k+1} & = (u^k-\tau D^*v^k+\tau\hat u_\delta)/(1+\tau),\\
    z^{k+1} & =  v^k+\sigma D(2u^{k+1}-u^k),\\
    \text{for } i & = 1,\dots,p, j=1,\dots,q, \text{ do }\\ 
    & v_{ij}^{k+1}=\sigma(I-\text{prox}_{\sigma^{-1}r})(z_{ij}^{k+1}/\sigma).
    \end{aligned}
\end{equation}
In this algorithm, the variable $u$ lives in $\mathbb{R}^{p\times q}$, and it is the one that converges to the denoised image. Additionally, the variables $v$ and $z$ both live in $\mathbb{R}^{p\times q\times 2}$, with components $v_{ij},z_{ij}\in\mathbb{R}^2$. We now want to make use of the PnP-CP method introduced in \cref{eq:pnpcp}. In order to do so, we assume that the firmly nonexpansive operator $T$ that has to substitute $\text{prox}_{\sigma^{-1} R}$ in \cref{eq:CPexpers} retains the same properties. In particular, we suppose that $T$ is a diagonal operator of the form
\begin{equation}\label{eq:separable_T}
T = \text{diag}(T_r, \dots, T_r) : \, \mathbb{R}^{p\times q \times 2} \to \mathbb{R}^{p\times q \times 2}.
\end{equation}
This means that we just need to learn an operator $T_r: \R^2\to \R^2$ that substitutes $\text{prox}_{\sigma^{-1} r}$ in \cref{eq:Separable_CP}, a task that is more suited for our method. The resulting PnP algorithm reads as follows
\begin{equation}\label{eq:PnP_CP}
    \begin{aligned}
    u^{k+1} & = (u^k-\tau D^*v^k+\tau\hat u_\delta)/(1+\tau),\\
    z^{k+1} & =  v^k+\sigma D(2u^{k+1}-u^k),\\
    \text{for } i & = 1,\dots,p, j=1,\dots,q, \text{ do }\\ 
    & v_{ij}^{k+1}=\sigma(\mathrm{Id}-T_r)(z_{ij}/\sigma),
    \end{aligned}
\end{equation}
where the step-sizes $\tau, \sigma>0$ are such that $\tau \sigma\leq \|D\|^2$. Notice that we used Moreau's identity to use directly $T_r$, as we explained in \cref{sec:pnpcp}.

Next, we describe how to construct the training set. We first observe that, given a noisy image $\hat u_\delta$, the algorithm \cref{eq:PnP_CP} should produce a solution that approximates the corresponding clean image $\hat u$. This motivates us to consider clean images as fixed points of the iteration \cref{eq:PnP_CP}. Hence, we assume the pair $(\hat u, v)$ to satisfy
\[
\begin{aligned}
(1+\tau)\hat u &=\hat u-\tau D^* v + \tau \hat u_\delta,\\
v & = \sigma (\mathrm{Id}-T)(v/\sigma + D \hat u),
\end{aligned}
\]
for some $v\in\R^{p\times q \times 2}$, and thus
\begin{equation}\label{eq:Du_T}
D \hat u = T(D \hat u + v/\sigma) \quad \text{ for some $v$ such that } D^* v  = \delta,
\end{equation}
where we used that $\hat u_\delta=\hat u + \delta$, where $\delta \sim N(0,\eta \mathrm{Id})$, $\eta>0$.
Thus, using \cref{eq:separable_T}, we can derive a pixelwise expression for the first equality in \cref{eq:Du_T}:
\[\begin{aligned}
(D \hat u)_{ij} = T_r((D \hat u)_{ij} + v_{ij}/\sigma) \quad i=1,\dots,p \, , \, j= 1,\dots , q.
\end{aligned}\]
This shows that the operator $T_r$ we want to learn should act as a denoiser on the discrete gradients of the original image $\hat u$. In fact, since $\delta$ is Gaussian and $D$ is a linear operator, $v/\sigma$ could also be chosen Gaussian. For simplicity, we suppose that $v/\sigma$ is distributed as $N(0,\tilde \eta ^2 \mathrm{Id})$ for some $\tilde \eta>0$. This motivates us to consider pixelwise noisy gradients of images as inputs (called $\bar x_i \in \R^2$ in \cref{sec3}) and their corresponding clean pixelwise gradients as outputs (called $\bar z_i\in \R^2$ at the beginning of \cref{sec:learningPA}). For constructing the training set, we use a noise level $\tilde \eta$ independent of $\eta$, since we want to be able to denoise images with various levels of noise. In fact, once the operator $T_r$ is learned and fixed, we use the step-size $\sigma$ as regularization parameter, as already discussed.

The training process of our proposed denoiser was done over samples from an image of a butterfly and images from the MNIST dataset, shown in \cref{figure:dataset}. This choice was made to learn an operator that allows the reconstruction of both edges and smoother parts. Notice that it is possible to choose any image sample. To achieve better performance, a good choice for training images could be using images similar to the one that has to be reconstructed, if available. Or, if the interest is more focused on the reconstruction of edges, a possibility is to choose (manually or automatically) pixels near edges or images with many edges. We show this in our experiments. All the images we consider in our experiments have pixel values between $0$ and $255$. After some analysis, we noticed that we arrived at good results also with a low number of data points (which made the learning process easy and relatively fast). We choose $1000$ pairs of data points $(\bar x_i,\bar z_i)$ and perform the learning process as described in the first part of \cref{exps}. To select these data points, we first consider the noisy gradients in every pixel from the data images \cref{figure:dataset} (the images we consider depend on the experiment we perform, see below). From these points, we then select, through some clustering process, $250$ points and then symmetrize them with respect to both axes to obtain $1000$ data points. The noisy data of gradients were generated using Gaussian distributions with fixed standard deviation $\tilde \eta =10$. As already discussed, we use the dual step-size as a regularization parameter. Tuning such parameter allows us to deal with different noise levels (in the experiments we set the standard deviation of the Gaussian noise of the test images to $10$, $20$, and $30$).

\paragraph{Code statement} All the simulations have been implemented in Matlab on a laptop with Intel Core i7 1165G7 CPU @ 2.80GHz and 8 Gb of RAM. The code is available at
\url{https://github.com/TraDE-OPT/Learning-firmly-nonexpansive-operators}.

\begin{figure}
    \centering
\begin{minipage}{0.29\textwidth}
    
\includegraphics[width=\linewidth]{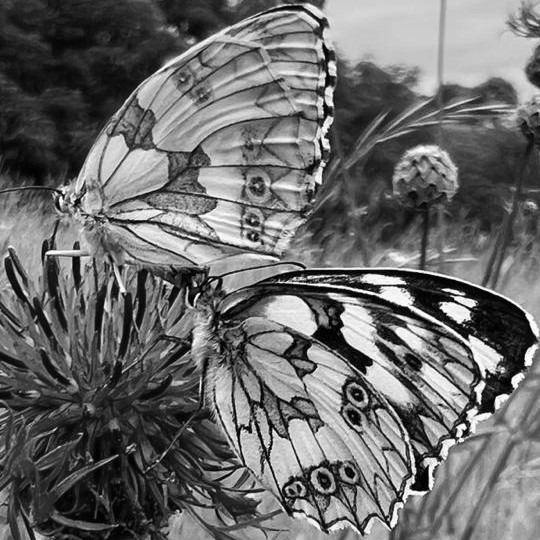}
        \end{minipage}
\begin{minipage}{0.7\textwidth}
\includegraphics[width=0.19\linewidth]{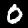}
        \includegraphics[width=0.19\linewidth]{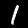}
        \includegraphics[width=0.19\linewidth]{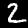}
        \includegraphics[width=0.19\linewidth]{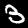}
        \includegraphics[width=0.19\linewidth]{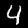}
        \includegraphics[width=0.19\linewidth]{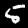}
        \includegraphics[width=0.19\linewidth]{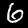}
        \includegraphics[width=0.19\linewidth]{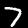}
\includegraphics[width=0.19\linewidth]{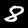}
\includegraphics[width=0.19\linewidth]{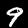}
            
\end{minipage}
        
    \caption{Clean data images. In order to construct the data set, we make use of Gaussian noise with noise level $\eta = 10$.}
    \label{figure:dataset}
\end{figure}

\begin{figure}
    \centering
    \includegraphics[width=0.3\linewidth]{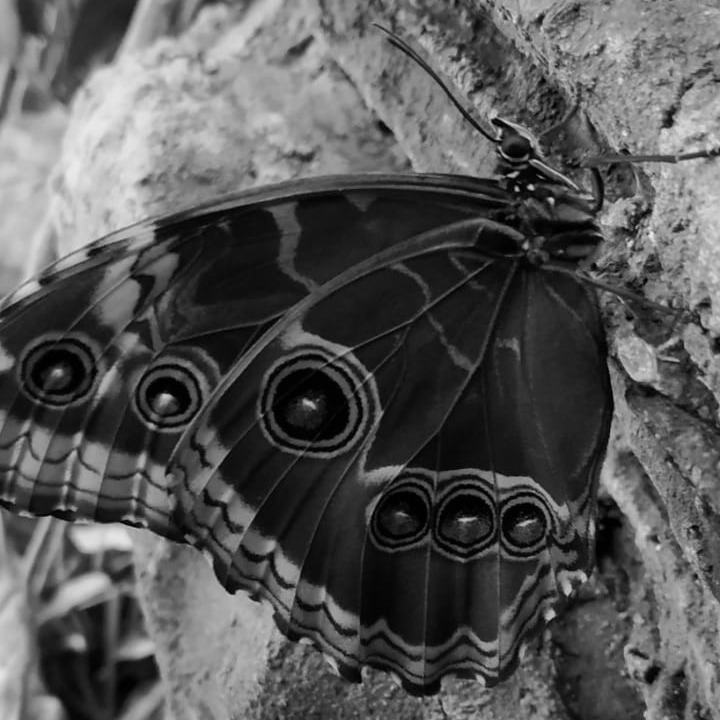}
    \includegraphics[width=0.3\linewidth]{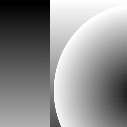}
    \caption{Clean version of the test images for experiments 1 and 2, respectively.}
    \label{figure:testset}
\end{figure}

\paragraph{First experiments: butterfly} 
In the first experiment, we test the learned operator using images similar to the ones utilized during the learning process. We use as training set clean and noisy data of gradients from the image on the left in \cref{figure:dataset} and as test image we consider a noisy version of the image on the left in \cref{figure:testset}. We perform experiments using three different levels of noise applied to the test image, considering Gaussian noise with standard deviation $\eta=10, \ 20$, and $30$, respectively. In \cref{figure:butterfly}, we report some results given by solving \cref{eq:image_denoise} with $R=\frac{\alpha}{2}\|\cdot\|_{\mathrm{F}}^2$ (indicated as H$1$ in the experiments), isotropic TV (TV), and the learned denoiser (Learned). Parameter and step-size choices were done manually, looking for best performance. In \cref{table:1}, we report the results in terms of PSNR and SSIM for reconstructions given by solving \cref{eq:image_denoise}. As it can be seen in \cref{table:1}, our method can perform well in practice and it is competitive with classical gradient-based regularization functions. We do not claim to have a method that improves state-of-the-art approaches. As we have already mentioned, the main contribution of our work is to propose a data-driven approach in the context of learning firmly nonexpansive resolvents, while still providing good theoretical guarantees. Further extending and exploiting this framework to more comprehensive settings is a topic of future work. 

\begin{figure}[ht!]
  \centering
  \includegraphics[width=\linewidth]{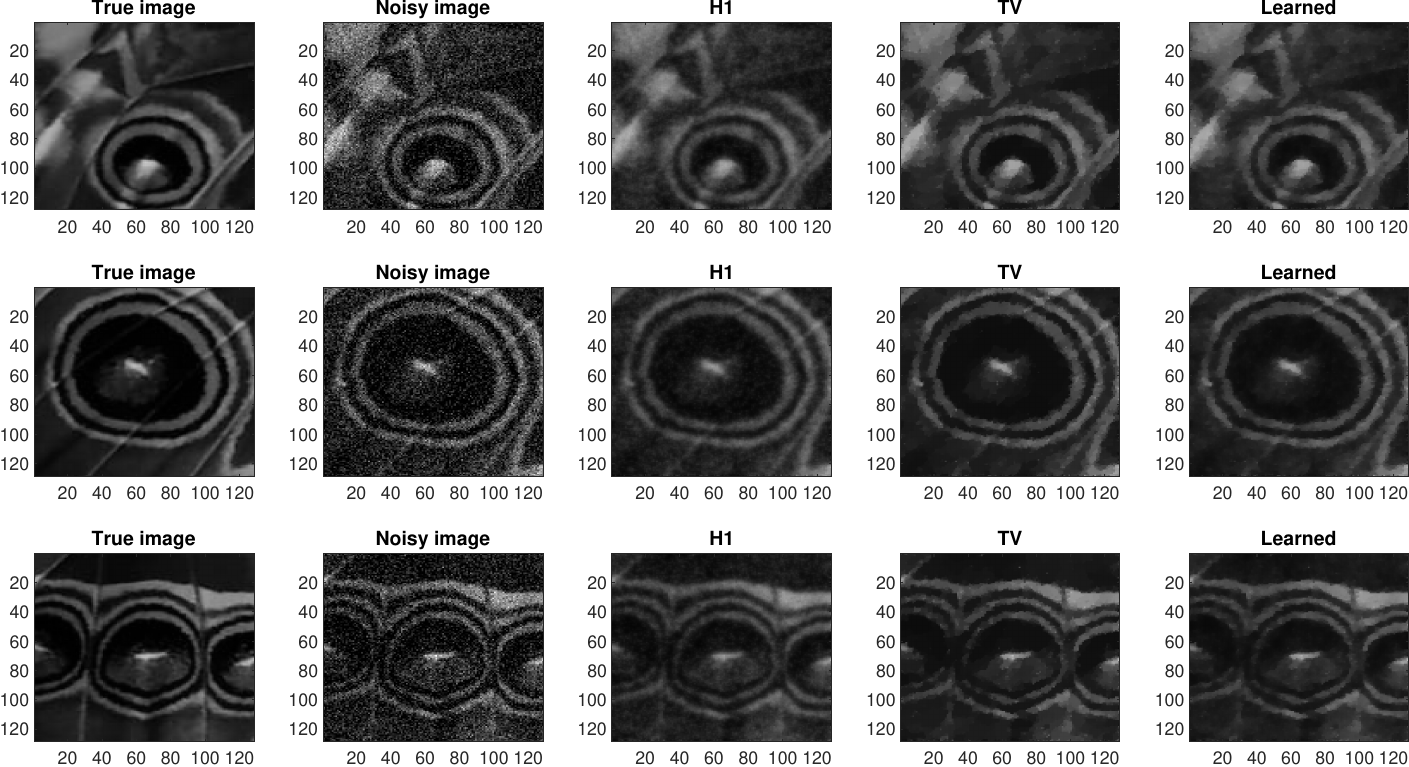}
  \caption{Butterfly images. Results of the experiment performed using a noisy image with Gaussian noise with noise level $\eta=30$.}
  \label{figure:butterfly}
\end{figure}

\begin{table}[ht!]\label{table:1}
\centering
\begin{tabular}{ |p{2.5cm}||p{2cm}|p{2cm}|p{2cm}|p{2cm}|  }
 \hline
 \multicolumn{5}{|c|}{Butterfly images} \\
 \hline
  & Noisy & H1 & TV & Learned \\
 \hline
 \hline
$\eta=10$ &&&& \\
PSNR (dB) & $ 28.2806 $ & $ 32.7332 $ & $ 33.7548 $ & $ \mathbf{33.8520} $ \\ 
SSIM & $ 0.67053 $ & $ 0.85164 $ & $ 0.90794 $ & $ \mathbf{0.90884} $ \\ 
 \hline
 \hline
$\eta=20$ & & & & \\
PSNR (dB) & $ 22.5353 $ & $ 29.3839 $ & $ 29.8716 $ & $ \mathbf{29.9487} $ \\ 
SSIM & $ 0.41033 $ & $ 0.77386 $ & $ \mathbf{0.83393} $ & $ 0.83243 $ \\ 
 \hline
 \hline
$\eta=30$ &&&& \\
PSNR (dB) & $ 19.3036 $ & $ 27.2925 $ & $ 27.5068 $ & $ \mathbf{27.5386} $ \\ 
SSIM & $ 0.27491 $ & $ 0.75030 $ & $ 0.76957 $ & $ \mathbf{0.77803} $ \\ 
 \hline
\end{tabular}
\caption{Comparison of performance for denoising using various regularization methods: $H^1$ penalty (indicated by $\mathrm{H1}$), Isotropic Total Variation (indicated by $\mathrm{TV}$), and ours (Learned). The comparison is given in terms of Peak Signal-to-Noise ratio (PSNR), and Structural Similarity Index Measure (SSIM).}
\end{table}

\paragraph{Second experiment: circles and edges} In the second experiment we performed tests on images of circles and shadows, using two different data sets and thus, two different learned operators. We use training data derived from the images shown in \cref{figure:dataset}. This choice was made to understand how Plug-and-Play methods were able to reconstruct both edges and smooth parts when provided with operators learned using completely different datasets. In \cref{figure:circles}, we report some results given by solving \cref{eq:image_denoise} with H$1$, TV, the denoiser learned using clean and noisy gradients from the image of butterflies on the left in \cref{figure:dataset} (Learned 1) and the one learned using clean and noisy gradients from the MNIST images on the right in \cref{figure:dataset} (Learned 2). Parameter and step-size choices were again done manually, looking for best performance. We show the values in terms of PSNR and SSIM for three different noise levels in \cref{table:2}. It is possible to see that (for both datasets) our data-driven method reconstructs well edges while not introducing too many artifacts in the image. It is interesting to notice that, while the operator learned using more natural images (the images of butterflies) reconstructs smoother solutions, the one learned using the MNIST dataset seems to reconstruct better edges while introducing some artifacts (similarly to what happens for TV regularization).

Since we have the full expression of the learned operators, we can analyze them further. In \cref{figure:convergence} (on the left), we plot the Lipschitz constants for the learned operator in every element $S_t$ of the triangulation; i.e., the norm of $B_tA_t^{-1}$. Recall that the Lipschitz constant for the learned operator is the maximum of the Lipschitz constant in every triangle. We can notice that the Lipschitz constant is indeed less than $1$, even if we used an approximative algorithm to find a solution of \cref{Problem_for_CP} that only satisfies the Lipschitz constraints in the limit. This is due to the fact that during training we replaced the constraints $\|B_t(\cdot)A_t^{-1}\|\leq 1$ in \cref{eq:111} with $\|B_t(\cdot)A_t^{-1}\|\leq 1-\varepsilon$, $\varepsilon \in (0,1)$, with $\epsilon = 0.01$, searching in this way $(1-\varepsilon)$-Lipschitz operators. We learned in this way actual nonexpansive operators for which we have strong convergence guarantees. In \cref{figure:convergence}, on the right, it is possible to see the convergence behavior of the PnP Chambolle-Pock method described in \cref{eq:PnP_CP}. In particular, we show the values of the ``primal residual'' $\|(u^{k}-u^{k+1})/\tau-D^*(v^k-v^{k+1})\|$ and of the ``dual residual'' $\|(v^{k}-v^{k+1})/\sigma-D(u^k-u^{k+1})\|$ as a function of iterations (see for example (12) in \cite{Goldstein2015} to see the relevance of these quantities). \cref{figure:action} shows how the action of the learned operator from $\R^2$ to $\R^2$ looks like. There, one can see that, while the operator learned on the butterfly image (Learned 1) looks similar to the proximal map of the $2$-norm, we can also notice the property of having larger displacements further away from zero, which is more typical of the proximity operator of the $2$-norm squared. The operator learned on the MNIST dataset (Learned 1) looks similar to the proximity operator of the $1$-norm, while presenting some differences.

\begin{figure}[ht!]
  \centering
\includegraphics[width=\linewidth]{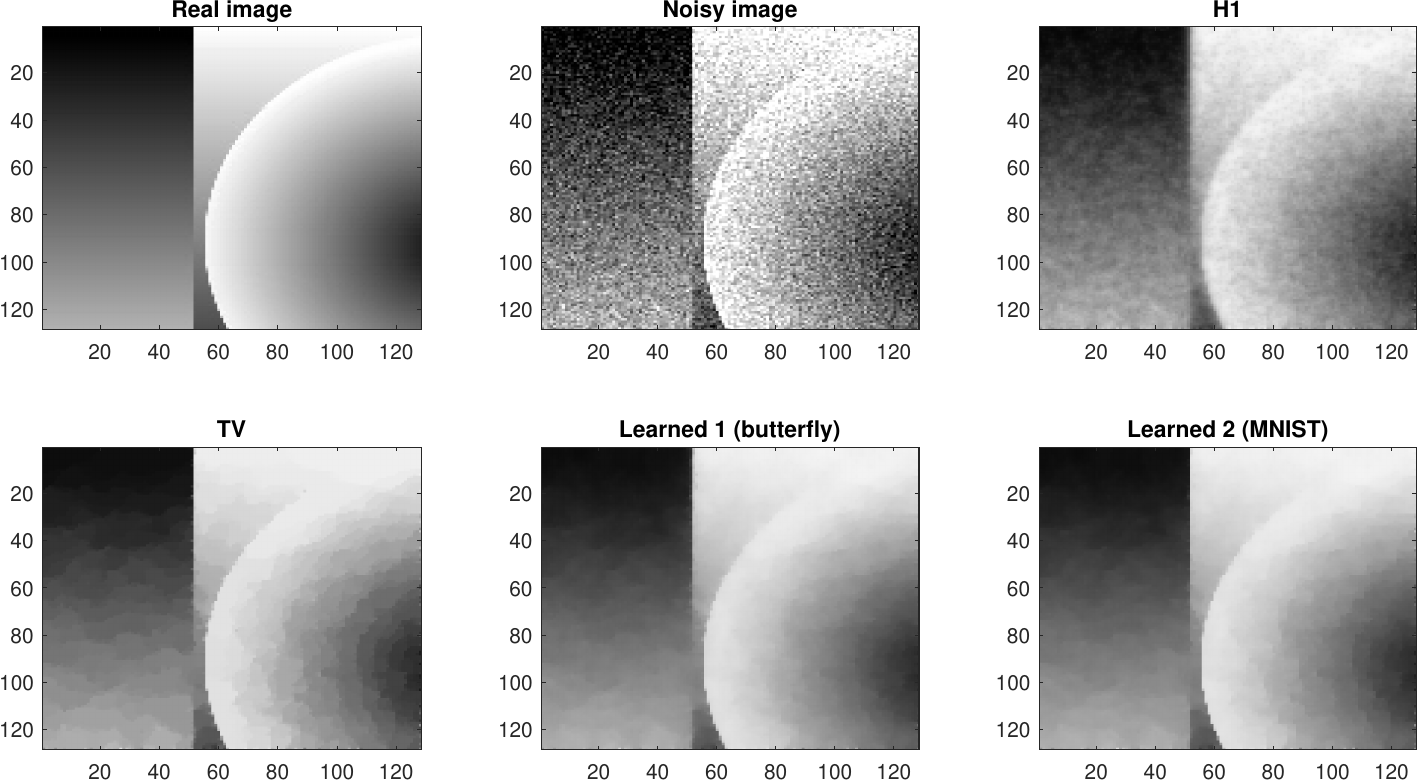}
  \caption{Circles. Obtained results when using a noisy image with Gaussian noise, with noise level $\eta=30$. We compare the performance of our method with classical regularization methods, and also when considering different training sets.}
  \label{figure:circles}
\end{figure}

\begin{table}[ht!]\label{table:2}
\centering
\begin{tabular}{ |p{2.2cm}||p{1.6cm}|p{1.6cm}|p{1.6cm}|p{1.65cm}|p{1.65cm}|  }
 \hline
 \multicolumn{6}{|c|}{Circle images} \\
 \hline
  & Noisy & H1 & TV & Learned 1 & Learned 2 \\
 \hline
 \hline
$\eta=10$ &&&&& \\
PSNR (dB) & $ 28.3301 $ & $ 32.3894 $ & $ 38.2521 $ & $ 37.8624$ & $ \mathbf{38.7341} $ \\ 
SSIM & $ 0.54501 $ & $ 0.83355 $ & $ 0.94874 $ & $ 0.95977 $ & $ \mathbf{0.96274} $ \\
 \hline
 \hline
$\eta=20$ & & & && \\
PSNR (dB) & $ 22.5004 $ & $ 29.1904 $ & $ 33.9466 $ & $ 33.4002$ & $ \mathbf{34.4220} $ \\ 
SSIM & $ 0.28188 $ & $ 0.76196 $ & $ 0.91487 $ & $ \mathbf{0.94063} $ & $ 0.93997 $ \\
 \hline
 \hline
$\eta=30$ &&&&& \\
PSNR (dB) & $ 19.1533 $ & $ 27.4474 $ & $ 31.0878 $ & $ 30.4950$ & $ \mathbf{31.4184} $ \\ 
SSIM & $ 0.17782 $ & $ 0.76036 $ & $ 0.89535 $ & $ \mathbf{0.92679} $ & $ 0.92502 $ \\
 \hline
\end{tabular}
\caption{Comparison of performance for denoising using various regularization methods: $H^1$ penalty (indicated by $\mathrm{H1}$), Isotropic Total Variation (indicated by $\mathrm{TV}$), the learned operator using the butterfly dataset (Learned 1), and the learned operator using the MNIST dataset (Learned 2). The comparison is given in terms of Peak Signal-to-Noise ratio (PSNR), and Structural Similarity Index Measure (SSIM).}
\end{table}

\begin{figure}
    \centering
    \includegraphics[width=0.5\linewidth]{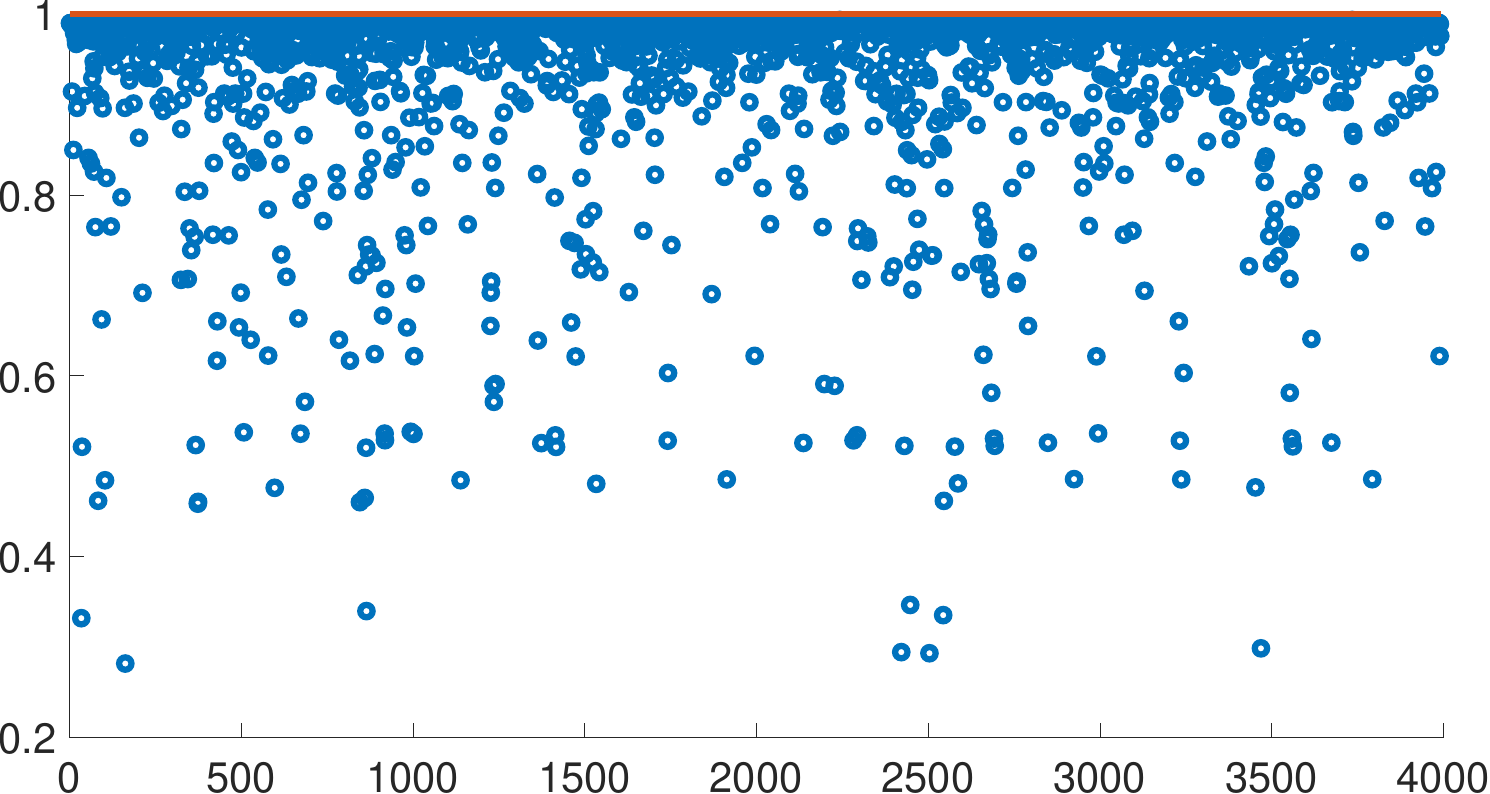}
    \includegraphics[width=0.49\linewidth]{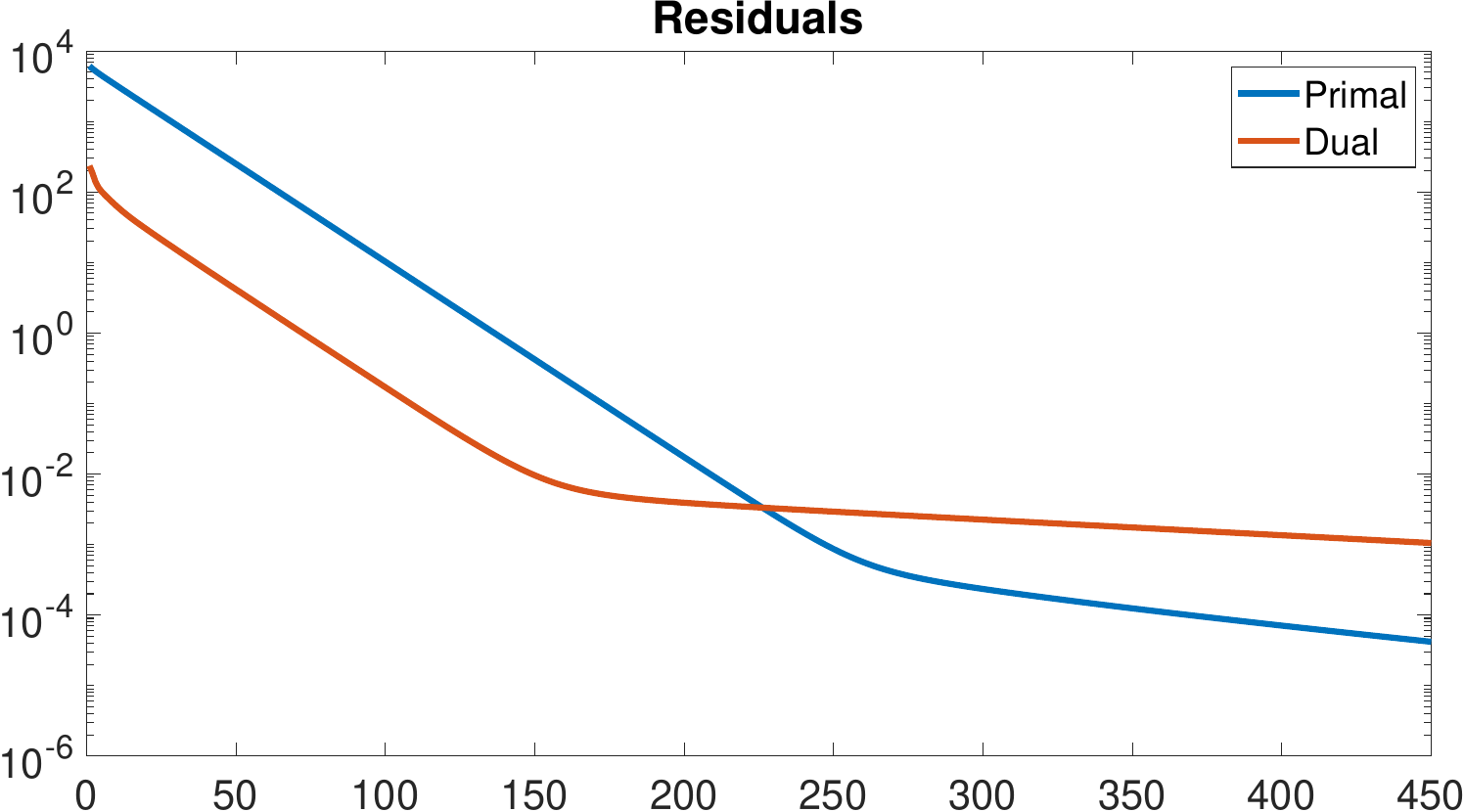}
    \caption{On the left: Lipschitz constants for the learned operator (Learned 1) in every element of the triangulation. On the right: convergence behavior of the primal and dual residuals for the PnP Chambolle-Pock method described in \cref{eq:PnP_CP}, employing the operator learned from butterfly images (Learned 1).}
    \label{figure:convergence}
\end{figure}

\begin{figure}
    \centering
    \includegraphics[width=0.49\linewidth]{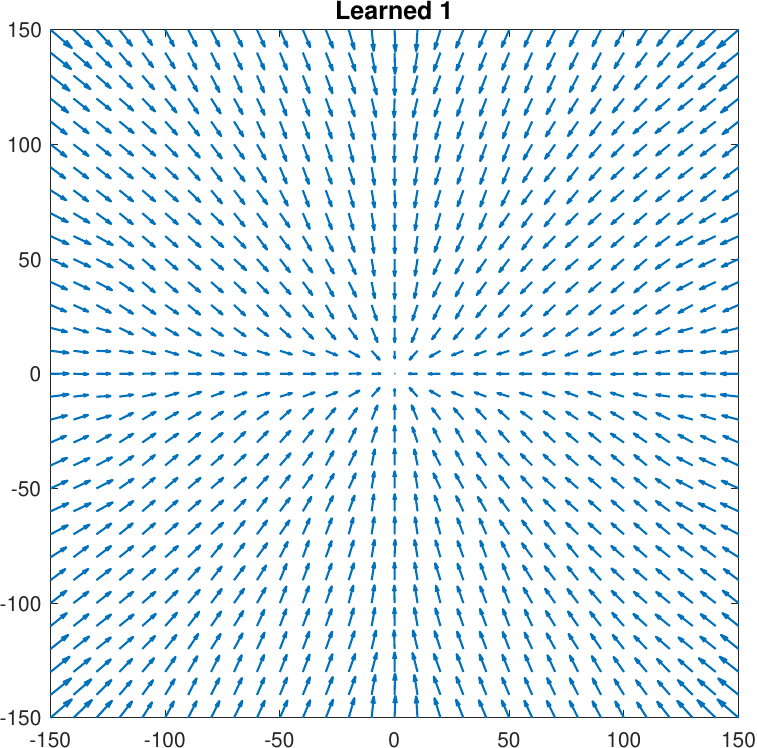}
    \includegraphics[width=0.49\linewidth]{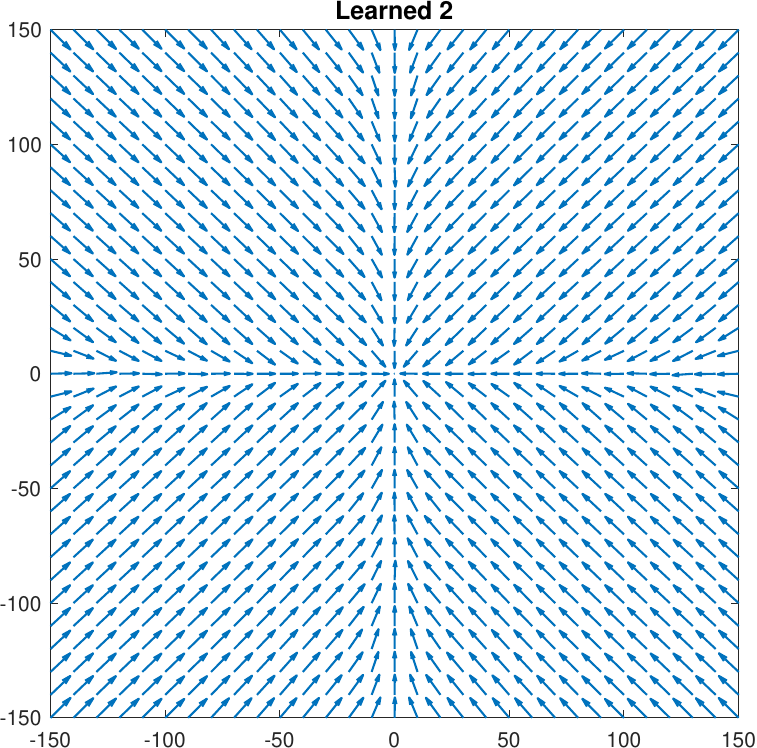}
    \includegraphics[width=0.32\linewidth]{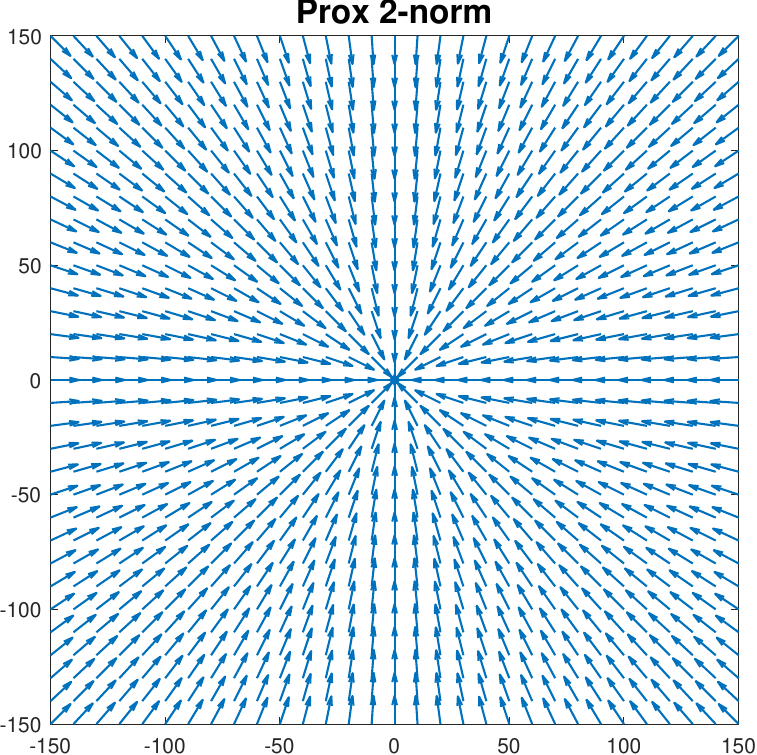}
    \includegraphics[width=0.32\linewidth]{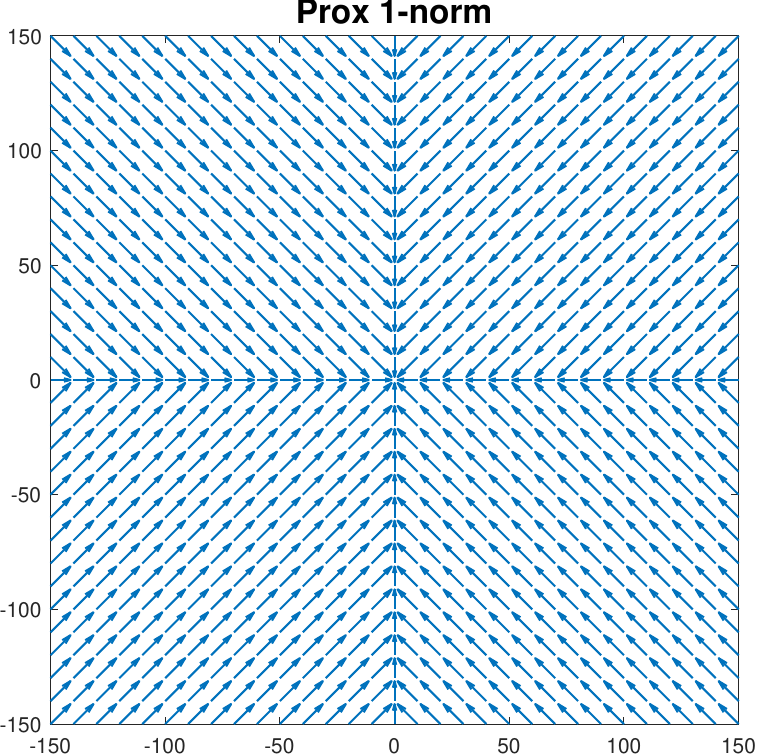}
    \includegraphics[width=0.32\linewidth]{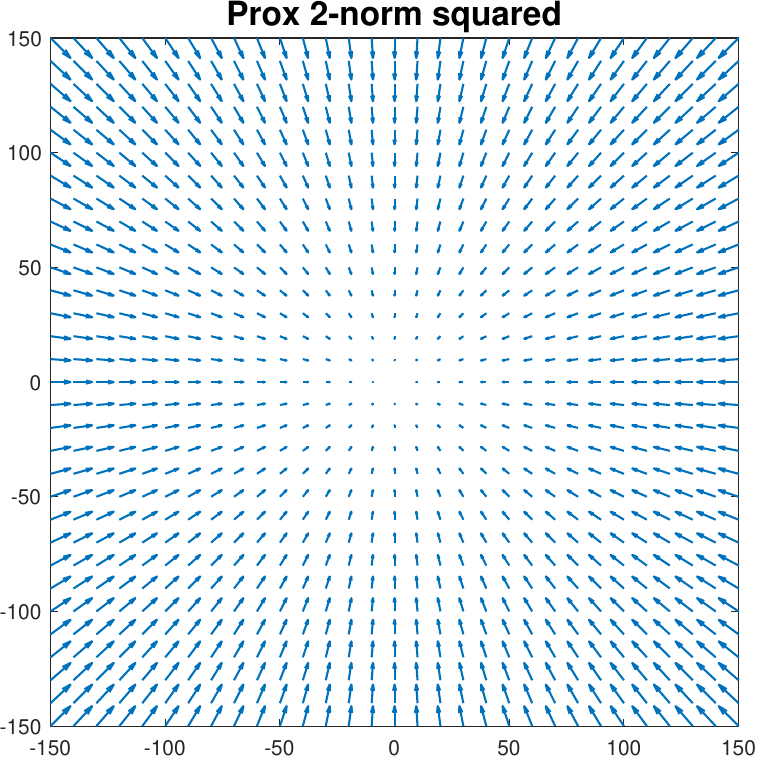}
    \caption{Action of proximal operators on a section of $\R^2$. Top-left: our operator $T_r$ learned on the butterfly image (Learned 1), top-right: our operator $T_r$ learned on the MNIST dataset (Learned 2), bottom-left: $\mathrm{prox}$ operator of the $2$-norm (used in the isotropic TV), bottom-center: $\mathrm{prox}$ map of the $1$-norm (used in the anisotropic TV), bottom-right: $\mathrm{prox}$ map of the $2$-norm squared.}
    \label{figure:action}
\end{figure}

\paragraph{Third experiment: proximity operators} A natural question that arises looking at these pictures is whether the learned operator is indeed a proximity operator of some underlying function or if it is just a firmly nonexpansive operator. Every proximal mapping of a proper, lower semicontinuous convex function $f$, is firmly nonexpansive and, moreover, is the gradient of a convex $C^{1}$ potential (the Moreau envelope of the conjugate). Hence, proximity operators are conservative vector fields (path integrals around closed loops are zero). Conversely, if a firmly nonexpansive mapping is conservative (i.e., admits a convex scalar potential), then it is the proximity operator of a convex function. Therefore, to test if a learned firmly nonexpansive operator is a proximity operator, we test if it is a gradient. A practical and informative necessary check is to test conservativity via closed–loop line integrals: small integrals indicate the field is close to being a proximity operator, while large integrals rule this out. To probe whether the learned operator is (approximately) a gradient, we performed a circulation test on many closed curves. For a vector field \(T:\mathbb{R}^2\to\mathbb{R}^2\), we evaluate the line integral \(\oint_\gamma T\cdot d\mathbf r\) over circles
\[
\gamma=\bigl\{\,c + R(\cos\theta,\sin\theta)\ :\ \theta\in[0,2\pi)\,\bigr\}
\]
 contained in the domain \([{-}150,150]^2\). Since a gradient field is conservative, \(\oint_\gamma \nabla\phi\cdot d\mathbf r=0\) for every closed curve; departures from zero reveal non-conservative content. To make results comparable across radii and locations, we report the \emph{normalized circulation}
\[
\mathrm{NormCirc}(T;\gamma)
=
\frac{\bigl|\oint_\gamma T\cdot d\mathbf r\bigr|}
     {\overline{\|T\|}_\gamma\,(2\pi R)},
\qquad
\overline{\|T\|}_\gamma:=\frac{1}{|\gamma|}\int_\gamma \|T\|\,ds,
\]
so that gradients yield \(\mathrm{NormCirc}\approx 0\), while purely rotational fields give constant values \(\mathrm{NormCirc}\approx 1\).
In our experiments, we sampled \(N=2048\) points per circle, tested radii \(R\in\{10,20,40,60,80,100\}\), and \(25\) random centers per radius (all circles fully inside the domain).

We compared with the family
\[
T_\alpha(x)=\alpha\,R_{90}x+(1-\alpha)\,x,
\qquad
R_{90}=\begin{bmatrix}0&-1\\[2pt]1&0\end{bmatrix},
\]
which linearly blends a pure rotation (non-conservative) with the identity (the proximity operator of the zero function, and clearly a gradient). 

We compute the mean of \(\mathrm{NormCirc}\) over all tested circles for both the learned operator and \(T_\alpha\), and define $\alpha^*$ to be the tested value of $\alpha$ such that $\mathrm{NormCirc}$ defined above for $T_{\alpha}$ and the learned operator $T$ are the most similar. In this way, we can think of \(\alpha^*\) as quantifying in some sense the amount of rotational content of the learned operator; smaller \(\alpha^*\) means “more gradient-like". The results in \cref{figure:isgradient_BFLY} and \cref{figure:isgradient_MNIST} show that the learned operator has a very small mean normalized circulation. We obtain approximately \(3.8\times 10^{-3}\) for the operator trained with the butterfly image (\cref{figure:isgradient_BFLY}) and \(4.7\times 10^{-4}\) for the operator trained with the MNIST dataset (\cref{figure:isgradient_MNIST}). This corresponds to a small \(\alpha^*\) on the calibration curve (around \(4.6 \times 10^{-3}\) and \(4.2 \times 10^{-4}\), respectively). The histogram of deviations around the mean further indicates that the circulation remains small across radii and centers, not just on average. Altogether, this provides strong evidence that the learned (firmly nonexpansive) operator is close to conservative, i.e., close to a gradient mapping (and thus a proximity operator) on the tested domain. This finding is noteworthy: despite not imposing circulation constraints or similar during training, the best data-fitting, firmly nonexpansive operators we obtained appear to be nearly gradient maps. This raises an interesting question for future work: do data-driven, firmly nonexpansive operators preferentially converge to proximity operators, or, even more profoundly, are proximity operators the only objects in the argmin of \cref{experr} for ``real" data distributions $\mu$?

\begin{figure}
    \centering
    \includegraphics[width=0.5\linewidth]{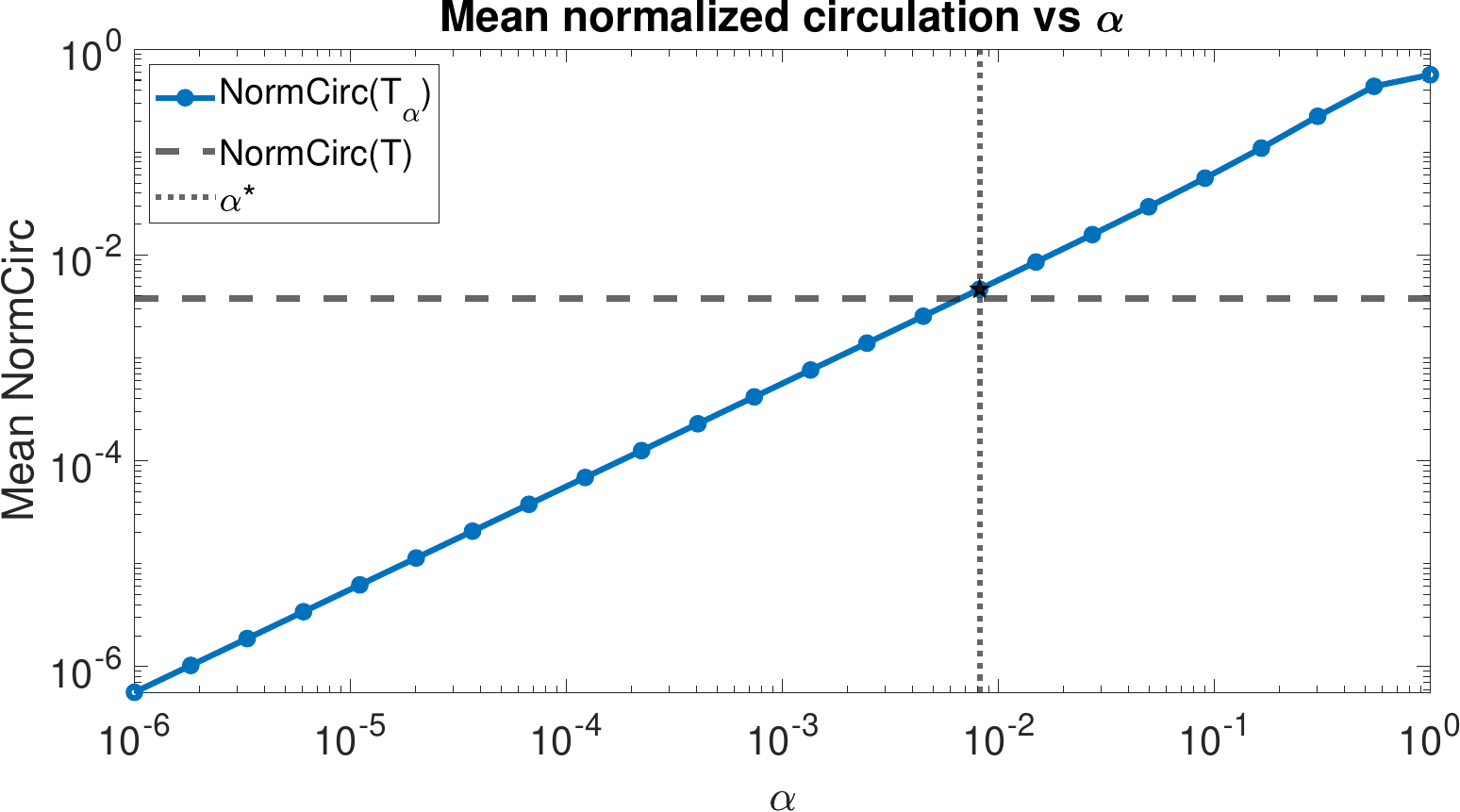}
    \includegraphics[width=0.49\linewidth]{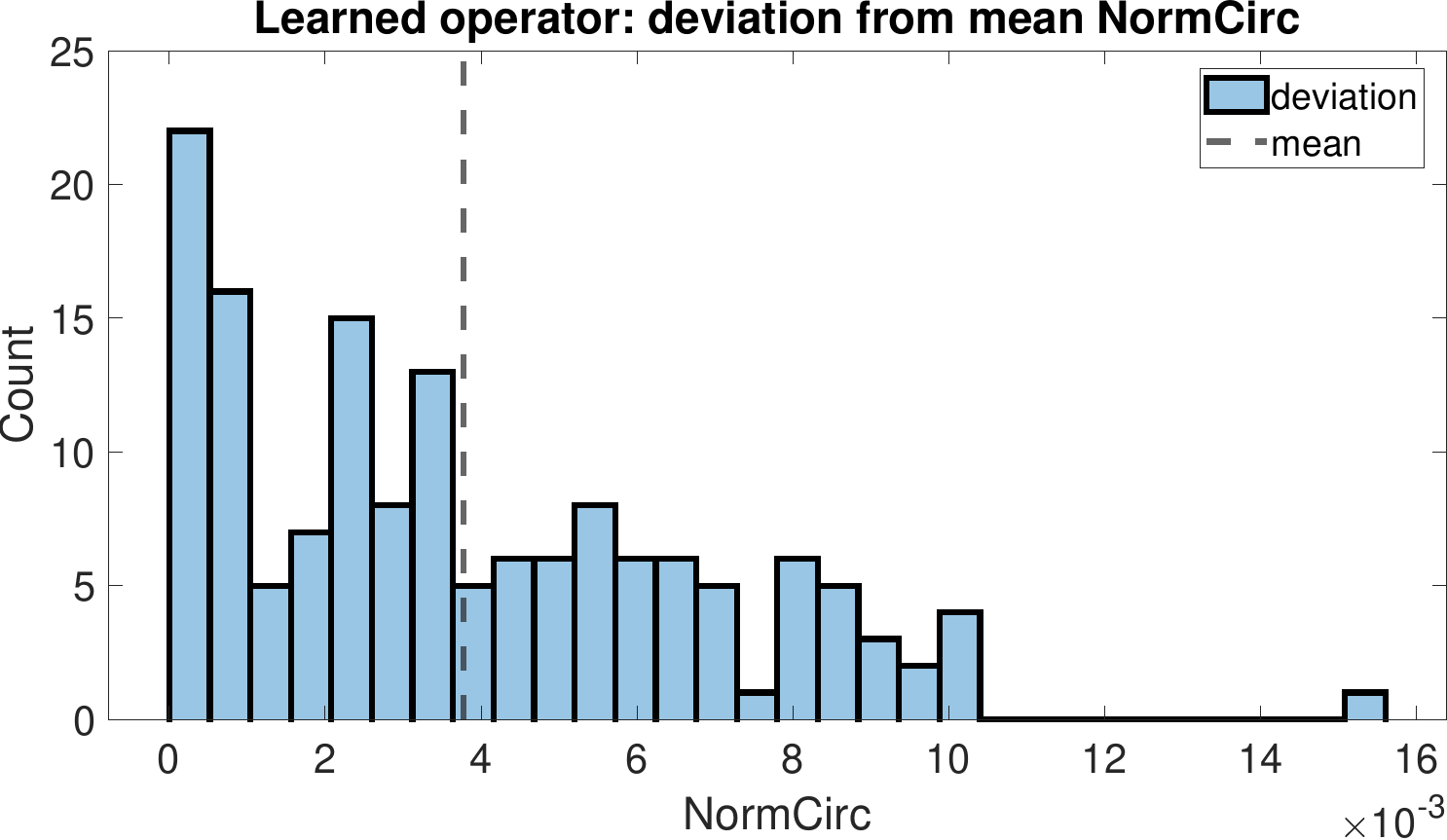}
    \caption{Results of the gradient test for the operator learned on the butterfly image (Learned 1). Left: average $\mathrm{NormCirc}(T_\alpha;\gamma)$ for the operator $T_\alpha(x)=\alpha\,R_{90}x+(1-\alpha)\,x$ and average $\mathrm{NormCirc}(T;\gamma)$ for the learned operator, over all the the $150$ tested paths $\gamma$. Right: histogram of values $\mathrm{NormCirc}(T;\gamma)$ for the $150$ tested paths.}
    \label{figure:isgradient_BFLY}
\end{figure}

\begin{figure}
    \centering
    \includegraphics[width=0.5\linewidth]{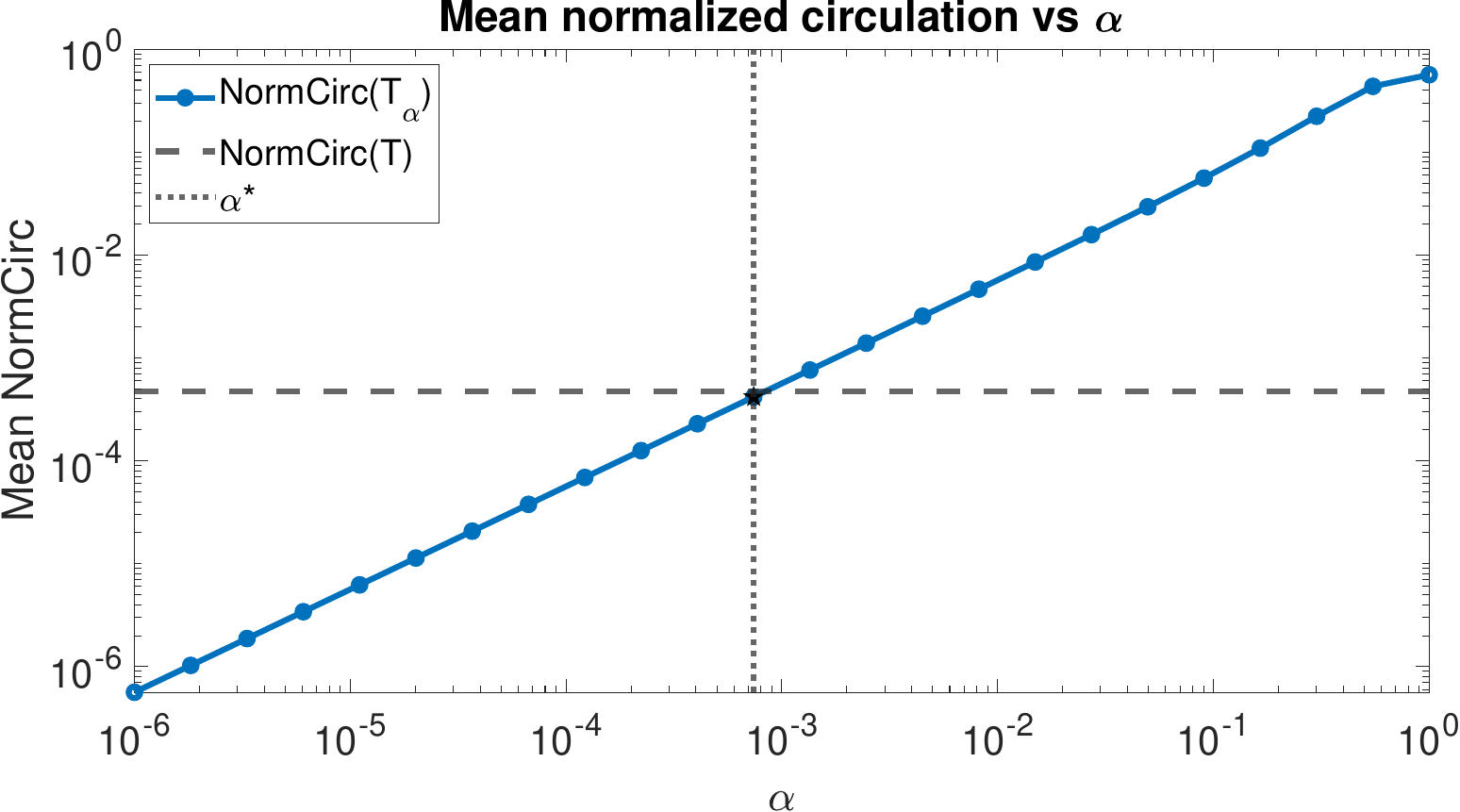}
    \includegraphics[width=0.49\linewidth]{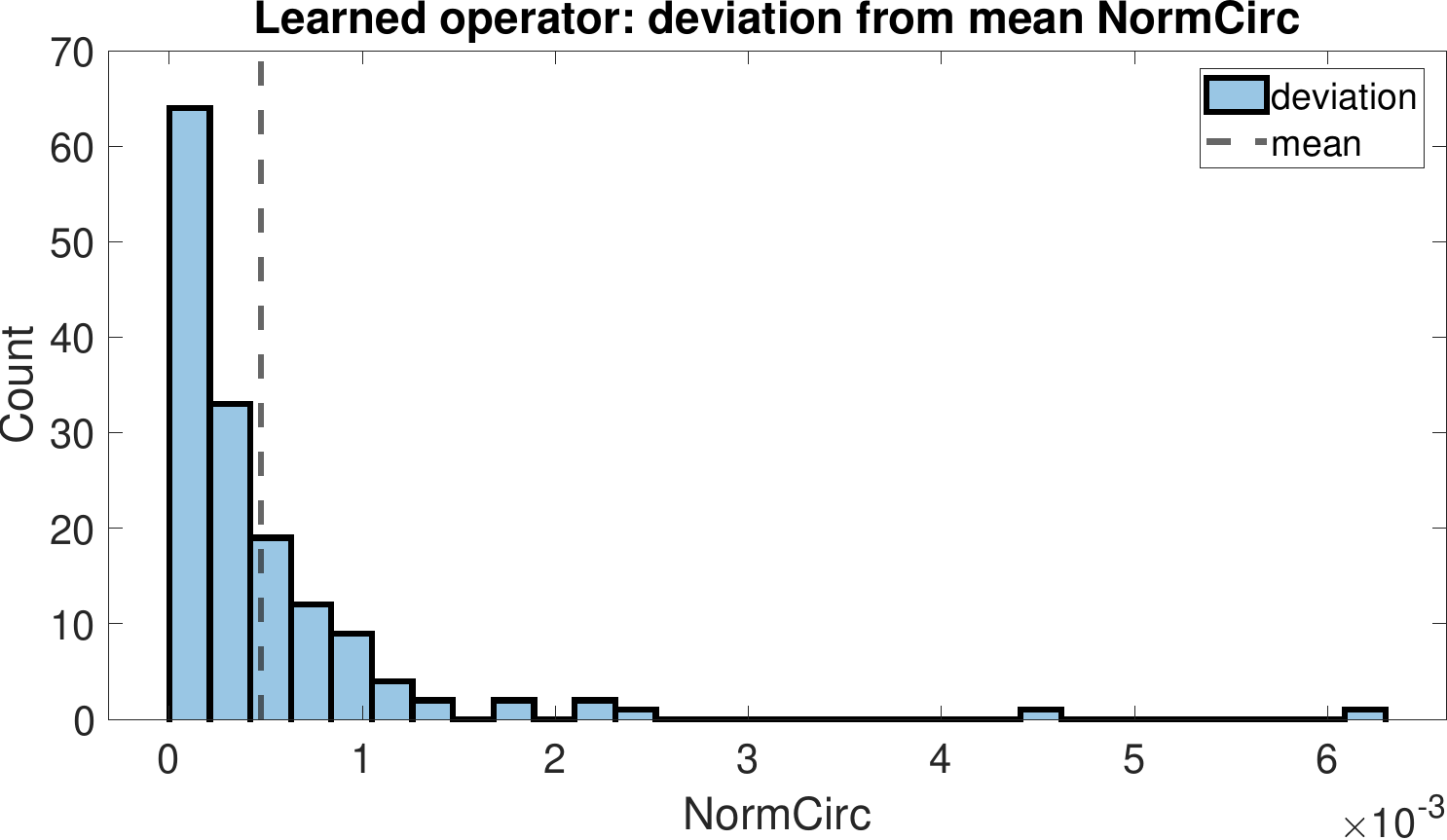}
    \caption{Results of the gradient test for the operator learned on the MNIST dataset (Learned 2).  Left: average $\mathrm{NormCirc}(T_\alpha;\gamma)$ for the operator $T_\alpha(x)=\alpha\,R_{90}x+(1-\alpha)\,x$ and average $\mathrm{NormCirc}(T;\gamma)$ for the learned operator, over all the the $150$ tested paths $\gamma$. Right: histogram of values $\mathrm{NormCirc}(T;\gamma)$ for the $150$ tested paths.}
    \label{figure:isgradient_MNIST}
\end{figure}

\section{Conclusions}
Plug-and-Play (PnP) models \cite{venkat2013} have turned to be very successful in a wide variety of applications such as image recovery problems. In this work, considering standard statistical learning theory \cite{cusma}, we present a complete theoretical framework for studying the problem of learning firmly nonexpansive operators. In addition, we propose a constructive method to produce firmly nonexpansive operators that adapts well to the characteristics of a given training set of noisy measurements/solutions. Our proposed approach, based on simplicial partitions and their refinements, gives theoretical guarantees for the convergence of PnP algorithms. In practice, constructing simplicial partitions in high dimensions is a challenging task due to their poor scalability properties. For this reason, we limit our numerical experiments to a low-dimensional setting. Nevertheless, we show that our technique can still be applied in complex scenarios, obtaining promising results. Furthermore, we do not claim that we can beat state of the art approaches involving, for instance, artificial neural networks \cite{Devalla2018,zhangIEEE,zhang}, since our problem depends on a low number of parameters, or higher order TV-based approaches such as TGV \cite{tgv}, since we consider only first order information in our experiments. Instead, we detail and illustrate our presented methodology, demonstrate its applicability in image denoising problems and, thanks to the low-dimensional setting, provide a clear geometrical interpretation of the action of the learned operators, providing also numerical insights on how the learned operators are close to being gradients and thus proximity operators of some underlying function. In future projects, we plan to investigate other instances where the method can be used with good practical performance. Moreover, we think that the learning process could be improved numerically in the future, finding other ways to solve \cref{eq:learning_task}. We also aim to go beyond firmly nonexpansiveness and learn proximal operators of nonconvex regularizers. Recent works have pointed out that going beyond convexity can be beneficial, achieving better reconstructions \cite{hurault22, ryupnp2019}. In all these cases, however, it is still required to learn an operator with a fixed Lipschitz constant. Since the theory developed in the present work easily adapts to the case of learning $L$-Lipschitz operators for any $L>0$, we believe that our contribution could be an important complement to existing analyses. 

\section{Acknowledgements}
This project has been supported by the TraDE-OPT project, which received funding from the European Union’s Horizon 2020 research and innovation program under the Marie Skłodowska-Curie grant agreement No 861137. The Department of Mathematics and Scientific Computing at the University of Graz, with which K.B. is affiliated, is a member of NAWI Graz (\url{https://nawigraz.at/en}). J.C.R. acknowledges the support of the Italian Ministry of Education, University and Research (grant ML4IP R205T7J2KP). The research by E.N. has been supported by the MUR Excellence Department Project awarded to Dipartimento di Matematica, Università di Genova, CUP D33C23001110001, and the US Air Force Office of Scientific Research (FA8655-22-1-7034). E.N. is a member of the Gruppo Nazionale per l’Analisi Matematica, la Probabilità e le loro Applicazioni (GNAMPA) of the Istituto Nazionale di Alta Matematica (INdAM). This work represents only the view of the authors. The European Commission and other organizations are not responsible for any use that may be made of the information it contains. We want to thank the anonymous reviewers for their valuable comments, which we believe made it possible to significantly improve the quality of the paper.

\bibliographystyle{siamplain}
\bibliography{references}

\begin{thebibliography}{10}

\bibitem{Anil2019}
{\sc C.~Anil, J.~Lucas, and R.~Grosse}, {\em Sorting out {L}ipschitz function
  approximation}, in Proceedings of the 36th International Conference on
  Machine Learning, 2019, pp.~291--301,
  \url{https://proceedings.mlr.press/v97/anil19a.html}.

\bibitem{Arens56}
{\sc R.~F. Arens and J.~Eells, Jr.}, {\em On embedding uniform and topological
  spaces}, Pacific J. Math., 6 (1956), pp.~397--403,
  \url{https://doi.org/10.2140/pjm.1956.6.397}.

\bibitem{Arridge2019}
{\sc S.~Arridge, P.~Maass, O.~Öktem, and C.-B. Schönlieb}, {\em Solving
  inverse problems using data-driven models}, Acta Numerica, 28 (2019),
  pp.~1--174, \url{https://doi.org/10.1017/S0962492919000059}.

\bibitem{ABS11}
{\sc H.~Attouch, J.~Bolte, and B.~F. Svaiter}, {\em {Convergence of descent
  methods for semi-algebraic and tame problems: proximal algorithms,
  forward-backward splitting, and regularized Gauss--Seidel methods}},
  {Mathematical Programming, Series A}, 137 (2011), pp.~91--124,
  \url{https://doi.org/10.1007/s10107-011-0484-9}.

\bibitem{BCombettes}
{\sc H.~H. Bauschke and P.~L. Combettes}, {\em Convex analysis and monotone
  operator theory in {H}ilbert spaces}, CMS Books in Mathematics/Ouvrages de
  Math\'{e}matiques de la SMC, Springer, Cham, second~ed., 2017,
  \url{https://doi.org/10.1007/978-3-319-48311-5}.

\bibitem{burgerbenning}
{\sc M.~Benning and M.~Burger}, {\em Modern regularization methods for inverse
  problems}, Acta Numerica, 27 (2018), pp.~1--111,
  \url{https://doi.org/10.1017/S0962492918000016}.

\bibitem{Bot_ADMM}
{\sc R.~I. Bo\c{t} and E.~R. Csetnek}, {\em A{DMM} for monotone operators:
  convergence analysis and rates}, Adv. Comput. Math., 45 (2019), pp.~327--359,
  \url{https://doi.org/10.1007/s10444-018-9619-3}.

\bibitem{admm}
{\sc S.~Boyd, N.~Parikh, E.~Chu, B.~Peleato, and J.~Eckstein}, {\em Distributed
  optimization and statistical learning via the alternating direction method of
  multipliers}, Foundations and Trends in Machine Learning, 3 (2011),
  pp.~1--122, \url{https://doi.org/10.1561/2200000016}.

\bibitem{braides}
{\sc A.~Braides}, {\em {Gamma-Convergence for Beginners}}, Oxford University
  Press, 2002, \url{https://doi.org/10.1093/acprof:oso/9780198507840.001.0001}.

\bibitem{Bredies_FB}
{\sc K.~Bredies}, {\em A forward–backward splitting algorithm for the
  minimization of non-smooth convex functionals in {B}anach space}, Inverse
  Problems, 25 (2008), p.~015005,
  \url{https://doi.org/10.1088/0266-5611/25/1/015005}.

\bibitem{breetal22}
{\sc K.~Bredies, M.~Carioni, S.~Fanzon, and F.~Romero}, {\em A generalized
  conditional gradient method for dynamic inverse problems with optimal
  transport regularization}, Foundations of Computational Mathematics,  (2022),
  \url{https://doi.org/10.1007/s10208-022-09561-z}.

\bibitem{brelinear}
{\sc K.~Bredies, M.~Carioni, S.~Fanzon, and D.~Walter}, {\em Asymptotic linear
  convergence of fully-corrective generalized conditional gradient methods},
  Math. Program., 205 (2024), pp.~135--202,
  \url{https://doi.org/10.1007/s10107-023-01975-z}.

\bibitem{bredies2021degenerate}
{\sc K.~Bredies, E.~Chenchene, D.~A. Lorenz, and E.~Naldi}, {\em Degenerate
  preconditioned proximal point algorithms}, SIAM Journal on Optimization, 32
  (2022), pp.~2376--2401, \url{https://doi.org/10.1137/21M1448112}.

\bibitem{Bredies_2020}
{\sc K.~Bredies and M.~Holler}, {\em Higher-order total variation approaches
  and generalisations}, Inverse Problems, 36 (2020), p.~123001,
  \url{https://doi.org/10.1088/1361-6420/ab8f80}.

\bibitem{tgv}
{\sc K.~Bredies, K.~Kunisch, and T.~Pock}, {\em Total generalized variation},
  SIAM J. Imaging Sci., 3 (2010), pp.~492--526,
  \url{https://doi.org/10.1137/090769521}.

\bibitem{bredies2023extreme}
{\sc K.~Bredies, J.~C. Rodriguez, and E.~Naldi}, {\em On extreme points and
  representer theorems for the {Lipschitz} unit ball on finite metric spaces},
  Arch. Math., 122 (2024), pp.~651--658,
  \url{https://doi.org/10.1007/s00013-024-01978-y}.

\bibitem{BrediesSun}
{\sc K.~Bredies and H.~Sun}, {\em Preconditioned {Douglas--Rachford} splitting
  methods for convex-concave saddle-point problems}, SIAM Journal on Numerical
  Analysis, 53 (2015), pp.~421--444, \url{https://doi.org/10.1137/140965028}.

\bibitem{Bredies_2017}
{\sc K.~Bredies and H.~Sun}, {\em A proximal point analysis of the
  preconditioned alternating direction method of multipliers}, Journal of
  Optimization Theory and Applications, 173 (2017), pp.~878–--907,
  \url{https://doi.org/10.1007/s10957-017-1112-5}.

\bibitem{buzz}
{\sc G.~T. Buzzard, S.~H. Chan, S.~Sreehari, and C.~A. Bouman}, {\em
  {Plug-and-Play} unplugged: Optimization-free reconstruction using consensus
  equilibrium}, SIAM Journal on Imaging Sciences, 11 (2018), pp.~2001--2020,
  \url{https://doi.org/10.1137/17M1122451}.

\bibitem{chambtv}
{\sc A.~Chambolle and P.-L. Lions}, {\em Image recovery via total variation
  minimization and related problems}, Numerische Mathematik, 76 (1997),
  pp.~167--188, \url{https://doi.org/10.1007/s002110050258}.

\bibitem{Chambolle2011}
{\sc A.~Chambolle and T.~Pock}, {\em A first-order primal-dual algorithm for
  convex problems with applications to imaging}, Journal of Mathematical
  Imaging and Vision, 40 (2011), pp.~120--145,
  \url{https://doi.org/10.1007/s10851-010-0251-1}.

\bibitem{ChambollePock2016}
{\sc A.~Chambolle and T.~Pock}, {\em An introduction to continuous optimization
  for imaging}, Acta Numerica, 25 (2016), pp.~161–--319,
  \url{https://doi.org/10.1017/S096249291600009X}.

\bibitem{chan}
{\sc S.~H. Chan, X.~Wang, and O.~A. Elgendy}, {\em {Plug-and-Play} {ADMM} for
  image restoration: Fixed-point convergence and applications}, IEEE
  Transactions on Computational Imaging, 3 (2017), pp.~84--98,
  \url{https://doi.org/10.1109/TCI.2016.2629286}.

\bibitem{chanmarqmu}
{\sc T.~Chan, A.~Marquina, and P.~Mulet}, {\em High-order total variation-based
  image restoration}, SIAM Journal on Scientific Computing, 22 (2000),
  pp.~503--516, \url{https://doi.org/10.1137/S1064827598344169}.

\bibitem{Rockafellar_FB}
{\sc G.~H.-G. Chen and R.~T. Rockafellar}, {\em Convergence rates in
  forward--backward splitting}, SIAM Journal on Optimization, 7 (1997),
  pp.~421--444, \url{https://doi.org/10.1137/S1052623495290179}.

\bibitem{Cobzas2019}
{\sc S.~Cobzaş, R.~Miculescu, and A.~Nicolae}, {\em Lipschitz Functions},
  vol.~2241 of Lecture Notes in Mathematics, Springer, 2019,
  \url{https://doi.org/10.1007/978-3-030-16489-8}.

\bibitem{combpesq}
{\sc P.~L. Combettes and J.-C. Pesquet}, {\em Proximal splitting methods in
  signal processing}, in Fixed-point algorithms for inverse problems in science
  and engineering, vol.~49 of Springer Optim. Appl., Springer, New York, 2011,
  pp.~185--212, \url{https://doi.org/10.1007/978-1-4419-9569-8_10}.

\bibitem{CombettesPesquet2021}
{\sc P.~L. Combettes and J.-C. Pesquet}, {\em Fixed point strategies in data
  science}, IEEE Transactions on Signal Processing, 69 (2021), pp.~3878--3905,
  \url{https://doi.org/10.1109/TSP.2021.3069677}.

\bibitem{CW05}
{\sc P.~L. Combettes and V.~R. Wajs}, {\em Signal recovery by proximal
  forward-backward splitting}, Multiscale Modeling \& Simulation, 4 (2005),
  pp.~1168--1200, \url{https://doi.org/10.1137/050626090}.

\bibitem{condat}
{\sc L.~Condat}, {\em {A primal-dual splitting method for convex optimization
  involving Lipschitzian, proximable and linear composite terms}}, {Journal of
  Optimization Theory and Applications}, 158 (2013), pp.~460--479,
  \url{https://doi.org/10.1007/s10957-012-0245-9}.

\bibitem{crisiglewal}
{\sc G.~Cristinelli, J.~A. Iglesias, and D.~Walter}, {\em Conditional gradients
  for total variation regularization with pde constraints: a graph cuts
  approach}, Computational Optimization and Applications,  (2025),
  \url{https://doi.org/10.1007/s10589-025-00699-4}.

\bibitem{cusma}
{\sc F.~Cucker and S.~Smale}, {\em On the mathematical foundations of
  learning}, Bull. Amer. Math. Soc. (N.S.), 39 (2002), pp.~1--49,
  \url{https://doi.org/10.1090/S0273-0979-01-00923-5}.

\bibitem{dabov}
{\sc K.~Dabov, A.~Foi, V.~Katkovnik, and K.~Egiazarian}, {\em Image denoising
  by sparse 3-{D} transform-domain collaborative filtering}, IEEE Transactions
  on Image Processing, 16 (2007), pp.~2080--2095,
  \url{https://doi.org/10.1109/TIP.2007.901238}.

\bibitem{Devalla2018}
{\sc S.~K. Devalla, P.~K~R, B.~Sreedhar, G.~Subramanian, L.~Zhang, S.~Perera,
  J.~M. Mari, K.~Chin, T.~Tun, N.~Strouthidis, T.~Aung, A.~Thiéry, and
  M.~Girard}, {\em {DRUNET}: a dilated-residual {U}-net deep learning network
  to segment optic nerve head tissues in optical coherence tomography images},
  Biomedical Optics Express, 9 (2018), pp.~3244--3265,
  \url{https://doi.org/10.1364/BOE.9.003244}.

\bibitem{dong}
{\sc W.~Dong, P.~Wang, W.~Yin, G.~Shi, F.~Wu, and X.~Lu}, {\em Denoising prior
  driven deep neural network for image restoration}, IEEE Transactions on
  Pattern Analysis and Machine Intelligence, 41 (2019), pp.~2305--2318,
  \url{https://doi.org/10.1109/TPAMI.2018.2873610}.

\bibitem{DR_original}
{\sc J.~Douglas, Jr. and H.~H. Rachford, Jr.}, {\em On the numerical solution
  of heat conduction problems in two and three space variables}, Trans. Amer.
  Math. Soc., 82 (1956), pp.~421--439, \url{https://doi.org/10.2307/1993056}.

\bibitem{ducotterd2024}
{\sc S.~Ducotterd, A.~Goujon, P.~Bohra, D.~Perdios, S.~Neumayer, and M.~Unser},
  {\em Improving {L}ipschitz-constrained neural networks by learning activation
  functions}, Journal of Machine Learning Research, 25 (2024), pp.~1--30,
  \url{http://jmlr.org/papers/v25/22-1347.html}.

\bibitem{duval}
{\sc V.~Duval}, {\em {Faces and extreme points of convex sets for the
  resolution of inverse problems}}, habilitation {\`a} diriger des recherches,
  {Ecole doctorale SDOSE}, 2022.

\bibitem{EcksteinBertsekas_DR}
{\sc J.~Eckstein and D.~P. Bertsekas}, {\em On the {D}ouglas-{R}achford
  splitting method and the proximal point algorithm for maximal monotone
  operators}, Mathematical Programming, 55 (1992), pp.~293--318,
  \url{https://doi.org/10.1007/BF01581204}.

\bibitem{Eckstein_ADMM}
{\sc J.~Eckstein and W.~Yao}, {\em Understanding the convergence of the
  alternating direction method of multipliers: theoretical and computational
  perspectives}, Pac. J. Optim., 11 (2015), pp.~619--644,
  \url{http://manu71.magtech.com.cn/Jwk3_pjo/EN/Y2015/V11/I4/619}.

\bibitem{edelsbrunner1987}
{\sc H.~Edelsbrunner}, {\em Algorithms in combinatorial geometry}, vol.~10,
  Springer Science \& Business Media, 1987.

\bibitem{fortune}
{\sc S.~Fortune}, {\em Vorono\u{\i} diagrams and {D}elaunay triangulations}, in
  Computing in {E}uclidean geometry, vol.~1 of Lecture Notes Ser. Comput.,
  World Sci. Publ., River Edge, NJ, 1992, pp.~193--233,
  \url{https://doi.org/10.1142/9789814355858\_0006}.

\bibitem{fwa}
{\sc M.~Frank and P.~Wolfe}, {\em An algorithm for quadratic programming},
  Naval Research Logistics Quarterly, 3 (1956), pp.~95--110,
  \url{https://doi.org/10.1002/nav.3800030109}.

\bibitem{Gabay_FB}
{\sc D.~Gabay}, {\em Chapter {IX} applications of the method of multipliers to
  variational inequalities}, in Augmented Lagrangian Methods: Applications to
  the Numerical Solution of Boundary-Value Problems, vol.~15, Elsevier, 1983,
  pp.~299--331, \url{https://doi.org/10.1016/S0168-2024(08)70034-1}.

\bibitem{ggar}
{\sc G.~Garrigos, L.~Rosasco, and S.~Villa}, {\em Convergence of the
  forward-backward algorithm: beyond the worst-case with the help of geometry},
  Mathematical Programming, 198 (2023), pp.~937--996,
  \url{https://doi.org/10.1007/s10107-022-01809-4}.

\bibitem{Godefroy03}
{\sc G.~Godefroy and N.~J. Kalton}, {\em Lipschitz-free {B}anach spaces},
  Studia Math., 159 (2003), pp.~121--141, \url{http://eudml.org/doc/285365}.

\bibitem{Goldstein2015}
{\sc T.~Goldstein, M.~Li, and X.~Yuan}, {\em Adaptive primal-dual splitting
  methods for statistical learning and image processing}, in Advances in Neural
  Information Processing Systems, vol.~28, 2015,
  \url{https://proceedings.neurips.cc/paper_files/paper/2015/file/cd758e8f59dfdf06a852adad277986ca-Paper.pdf}.

\bibitem{Guerrero2018}
{\sc J.~B. Guerrero, G.~López-Pérez, and A.~Rueda~Zoca}, {\em {Octahedrality
  in Lipschitz-free Banach spaces}}, Proceedings of the Royal Society of
  Edinburgh: Section A Mathematics, 148 (2018), pp.~447--–460,
  \url{https://doi.org/10.1017/S0308210517000373}.

\bibitem{Hanin92}
{\sc L.~G. Hanin}, {\em {Kantorovich-Rubinstein} norm and its application in
  the theory of {L}ipschitz spaces}, Proc. Amer. Math. Soc., 115 (1992),
  pp.~345--352, \url{https://doi.org/10.2307/2159251}.

\bibitem{hannu2014}
{\sc A.~Hannukainen, S.~Korotov, and M.~Křížek}, {\em On numerical
  regularity of the face-to-face longest-edge bisection algorithm for
  tetrahedral partitions}, Science of Computer Programming, 90 (2014),
  pp.~34--41, \url{https://doi.org/10.1016/j.scico.2013.05.002}.

\bibitem{he}
{\sc J.~He, Y.~Yang, Y.~Wang, D.~Zeng, Z.~Bian, H.~Zhang, J.~Sun, Z.~Xu, and
  J.~Ma}, {\em Optimizing a parameterized {Plug-and-Play} {ADMM} for iterative
  low-dose {CT} reconstruction}, IEEE Transactions on Medical Imaging, 38
  (2019), pp.~371--382, \url{https://doi.org/10.1109/TMI.2018.2865202}.

\bibitem{heide}
{\sc F.~Heide, M.~Steinberger, Y.-T. Tsai, M.~Rouf, D.~Pajkak, D.~Reddy,
  O.~Gallo, J.~Liu, W.~Heidrich, K.~Egiazarian, J.~Kautz, and K.~Pulli}, {\em
  {FlexISP}: A flexible camera image processing framework}, ACM Trans. Graph.,
  33 (2014), pp.~231:1--231:13, \url{https://doi.org/10.1145/2661229.2661260}.

\bibitem{Heinonen2005}
{\sc J.~Heinonen}, {\em Lectures on {L}ipschitz analysis}, tech. report,
  University of Jyväskylä, Jyväskylä, 2005.

\bibitem{steidl2021}
{\sc J.~Hertrich, S.~Neumayer, and G.~Steidl}, {\em Convolutional proximal
  neural networks and {Plug-and-Play} algorithms}, Linear Algebra and its
  Applications, 631 (2021), pp.~203--234,
  \url{https://doi.org/10.1016/j.laa.2021.09.004}.

\bibitem{hurault22}
{\sc S.~Hurault, A.~Leclaire, and N.~Papadakis}, {\em Proximal denoiser for
  convergent {Plug-and-Play} optimization with nonconvex regularization}, in
  Proceedings of the 39th International Conference on Machine Learning,
  vol.~162, 2022, pp.~9483--9505,
  \url{https://proceedings.mlr.press/v162/hurault22a.html}.

\bibitem{jaggi2013}
{\sc M.~Jaggi}, {\em Revisiting {Frank--Wolfe}: Projection-free sparse convex
  optimization}, in Proceedings of the 30th International Conference on Machine
  Learning, vol.~28(1), 2013, pp.~427--435,
  \url{https://proceedings.mlr.press/v28/jaggi13.html}.

\bibitem{pnpreview}
{\sc U.~S. Kamilov, C.~A. Bouman, G.~T. Buzzard, and B.~Wohlberg}, {\em
  {Plug-and-Play} methods for integrating physical and learned models in
  computational imaging: Theory, algorithms, and applications}, IEEE Signal
  Processing Magazine, 40 (2023), pp.~85--97,
  \url{https://doi.org/10.1109/MSP.2022.3199595}.

\bibitem{Kantorovich82}
{\sc L.~V. Kantorovich and G.~P. Akilov}, {\em Functional analysis}, Pergamon
  Press, 2nd~ed., 1982.

\bibitem{kirz}
{\sc M.~Kirszbraun}, {\em Über die zusammenziehende und lipschitzsche
  {T}ransformationen}, Fundamenta Mathematicae, 22 (1934), pp.~77--108,
  \url{http://eudml.org/doc/212681}.

\bibitem{krizek}
{\sc M.~Křížek and T.~Strouboulis}, {\em How to generate local refinements
  of unstructured tetrahedral meshes satisfying a regularity ball condition},
  Numerical Methods for Partial Differential Equations, 13 (1997),
  pp.~201--214,
  \url{https://doi.org/10.1002/(SICI)1098-2426(199703)13:2<201::AID-NUM5>3.0.CO;2-T}.

\bibitem{Lanthaler2024}
{\sc S.~Lanthaler}, {\em Operator learning of {L}ipschitz operators: An
  information-theoretic perspective}, arXiv preprint arXiv:2406.18794,  (2024).

\bibitem{DR_monotone}
{\sc P.~Lions and B.~Mercier}, {\em Splitting algorithms for the sum of two
  nonlinear operators}, SIAM Journal on Numerical Analysis, 16 (1979),
  pp.~964--979, \url{https://doi.org/10.1137/0716071}.

\bibitem{Lorenz2024}
{\sc D.~A. Lorenz, J.~Marquardt, and E.~Naldi}, {\em The degenerate variable
  metric proximal point algorithm and adaptive stepsizes for primal-dual
  {D}ouglas–{R}achford}, Optimization, 74 (2024),
  \url{https://doi.org/10.1080/02331934.2024.2325552}.

\bibitem{Lorenz2019}
{\sc D.~A. Lorenz and Q.~Tran-Dinh}, {\em Non-stationary {D}ouglas–{R}achford
  and alternating direction method of multipliers: adaptive step-sizes and
  convergence}, Computational Optimization and Applications, 74 (2019),
  pp.~67--92, \url{https://doi.org/10.1007/s10589-019-00106-9}.

\bibitem{megg}
{\sc R.~E. Megginson}, {\em An introduction to {B}anach space theory}, vol.~183
  of Graduate Texts in Mathematics, Springer-Verlag, New York, 1998,
  \url{https://doi.org/10.1007/978-1-4612-0603-3}.

\bibitem{mein}
{\sc T.~Meinhardt, M.~Moeller, C.~Hazirbas, and D.~Cremers}, {\em Learning
  proximal operators: Using denoising networks for regularizing inverse imaging
  problems}, in Proceedings of the IEEE International Conference on Computer
  Vision, 2017, pp.~1799--1808, \url{https://doi.org/10.1109/ICCV.2017.198}.

\bibitem{Neumayer2023}
{\sc S.~Neumayer, A.~Goujon, P.~Bohra, and M.~Unser}, {\em Approximation of
  {L}ipschitz functions using deep spline neural networks}, SIAM Journal on
  Mathematics of Data Science, 5 (2023), pp.~306--322,
  \url{https://doi.org/10.1137/22M1504573}.

\bibitem{OconnorVandenberghe}
{\sc D.~O'Connor and L.~Vandenberghe}, {\em Primal-dual decomposition by
  operator splitting and applications to image deblurring}, SIAM Journal on
  Imaging Sciences, 7 (2014), pp.~1724--1754,
  \url{https://doi.org/10.1137/13094671X}.

\bibitem{pescterr2021}
{\sc J.-C. Pesquet, A.~Repetti, M.~Terris, and Y.~Wiaux}, {\em Learning
  maximally monotone operators for image recovery}, SIAM Journal on Imaging
  Sciences, 14 (2021), pp.~1206--1237,
  \url{https://doi.org/10.1137/20M1387961}.

\bibitem{PolyanskiyWu}
{\sc Y.~Polyanskiy and Y.~Wu}, {\em Wasserstein continuity of entropy and outer
  bounds for interference channels}, IEEE Transactions on Information Theory,
  62 (2016), pp.~3992--4002, \url{https://doi.org/10.1109/TIT.2016.2562630},
  \url{https://doi.org/10.1109/TIT.2016.2562630}.

\bibitem{Quarteroni2017}
{\sc A.~Quarteroni}, {\em Numerical models for differential problems}, vol.~16
  of MS\&A. Modeling, Simulation and Applications, Springer, Cham, third~ed.,
  2017, \url{https://doi.org/10.1007/978-3-319-49316-9}.

\bibitem{Raginsky}
{\sc M.~Raginsky, A.~Rakhlin, and M.~Telgarsky}, {\em Non-convex learning via
  stochastic gradient {L}angevin dynamics: a nonasymptotic analysis}, in
  Proceedings of the 2017 Conference on Learning Theory, vol.~65 of Proceedings
  of Machine Learning Research, PMLR, 2017, pp.~1674--1703,
  \url{https://proceedings.mlr.press/v65/raginsky17a.html}.

\bibitem{rof}
{\sc L.~I. Rudin, S.~Osher, and E.~Fatemi}, {\em Nonlinear total variation
  based noise removal algorithms}, Physica D. Nonlinear Phenomena, 60 (1992),
  pp.~259--268, \url{https://doi.org/10.1016/0167-2789(92)90242-F}.

\bibitem{ryupnp2019}
{\sc E.~Ryu, J.~Liu, S.~Wang, X.~Chen, Z.~Wang, and W.~Yin}, {\em
  {Plug-and-Play} methods provably converge with properly trained denoisers},
  in Proceedings of the 36th International Conference on Machine Learning,
  vol.~97 of Proceedings of Machine Learning Research, 2019, pp.~5546--5557,
  \url{https://proceedings.mlr.press/v97/ryu19a.html}.

\bibitem{Santos2024}
{\sc C.~Santos~Garcia, M.~Larchevêque, S.~O’Sullivan, M.~Van~Waerebeke,
  R.~R. Thomson, A.~Repetti, and J.-C. Pesquet}, {\em A primal–dual
  data-driven method for computational optical imaging with a photonic
  lantern}, PNAS Nexus, 3 (2024), p.~pgae164,
  \url{https://doi.org/10.1093/pnasnexus/pgae164}.

\bibitem{scher}
{\sc O.~Scherzer, M.~Grasmair, H.~Grossauer, M.~Haltmeier, and F.~Lenzen}, {\em
  Variational methods in imaging}, vol.~167 of Applied Mathematical Sciences,
  Springer, New York, 2009, \url{https://doi.org/10.1007/978-0-387-69277-7}.

\bibitem{celledoni23}
{\sc F.~Sherry, E.~Celledoni, M.~J. Ehrhardt, D.~Murari, B.~Owren, and C.-B.
  Sch{\"o}nlieb}, {\em Designing stable neural networks using convex analysis
  and {ODE}s}, Physica D: Nonlinear Phenomena, 463 (2024), p.~134159,
  \url{https://doi.org/10.1016/j.physd.2024.134159}.

\bibitem{sree}
{\sc S.~Sreehari, S.~V. Venkatakrishnan, B.~Wohlberg, G.~T. Buzzard, L.~F.
  Drummy, J.~P. Simmons, and C.~A. Bouman}, {\em {Plug-and-Play} priors for
  bright field electron tomography and sparse interpolation}, IEEE Transactions
  on Computational Imaging, 2 (2016), pp.~408--423,
  \url{https://doi.org/10.1109/tci.2016.2599778}.

\bibitem{Sun2021}
{\sc Y.~Sun, Z.~Wu, X.~Xu, B.~Wohlberg, and U.~S. Kamilov}, {\em Scalable
  plug-and-play admm with convergence guarantees}, IEEE Transactions on
  Computational Imaging, 7 (2021), pp.~849--863,
  \url{https://doi.org/10.1109/TCI.2021.3094062}.

\bibitem{Suzuki2024}
{\sc Y.~Suzuki, R.~Isono, and S.~Ono}, {\em A convergent primal-dual deep
  {P}lug-and-{P}lay algorithm for constrained image restoration}, in ICASSP
  2024 - 2024 IEEE International Conference on Acoustics, Speech and Signal
  Processing (ICASSP), 2024, pp.~9541--9545,
  \url{https://doi.org/10.1109/ICASSP48485.2024.10448023}.

\bibitem{SVAITER2011}
{\sc B.~F. Svaiter}, {\em On weak convergence of the {D}ouglas-{R}achford
  method}, SIAM J. Control Optim., 49 (2011), pp.~280--287,
  \url{https://doi.org/10.1137/100788100}.

\bibitem{Tanielian2021}
{\sc U.~Tanielian and G.~Biau}, {\em Approximating {L}ipschitz continuous
  functions with groupsort neural networks}, in International Conference on
  Artificial Intelligence and Statistics, 2021, pp.~442--450,
  \url{https://proceedings.mlr.press/v130/tanielian21a.html}.

\bibitem{teod}
{\sc A.~M. Teodoro, J.~M. Bioucas-Dias, and M.~A.~T. Figueiredo}, {\em Image
  restoration and reconstruction using variable splitting and class-adapted
  image priors}, in IEEE International Conference on Image Processing, 2016,
  pp.~3518--3522, \url{https://doi.org/10.1109/ICIP.2016.7533014}.

\bibitem{teod17}
{\sc A.~M. Teodoro, J.~M. Bioucas-Dias, and M.~A.~T. Figueiredo}, {\em
  Scene-adapted {Plug-and-Play} algorithm with convergence guarantees}, in IEEE
  International Workshop on Machine Learning for Signal Processing, 2017,
  pp.~1--6, \url{https://doi.org/10.1109/MLSP.2017.8168194}.

\bibitem{Terris2020}
{\sc M.~Terris, A.~Repetti, J.-C. Pesquet, and Y.~Wiaux}, {\em Building firmly
  nonexpansive convolutional neural networks}, in ICASSP 2020 - 2020 IEEE
  International Conference on Acoustics, Speech and Signal Processing (ICASSP),
  2020, pp.~8658--8662, \url{https://doi.org/10.1109/ICASSP40776.2020.9054731}.

\bibitem{Tseng_FB}
{\sc P.~Tseng}, {\em Applications of a splitting algorithm to decomposition in
  convex programming and variational inequalities}, SIAM Journal on Control and
  Optimization, 29 (1991), pp.~119--138, \url{https://doi.org/10.1137/0329006}.

\bibitem{Varadarajan}
{\sc V.~S. Varadarajan}, {\em On the convergence of sample probability
  distributions}, Sankhy{\=a}: The Indian Journal of Statistics (1933--1960),
  19 (1958), pp.~23--26, \url{https://www.jstor.org/stable/25048365}.

\bibitem{venkat2013}
{\sc S.~V. Venkatakrishnan, C.~A. Bouman, and B.~Wohlberg}, {\em
  {Plug-and-Play} priors for model based reconstruction}, in 2013 IEEE Global
  Conference on Signal and Information Processing, 2013, pp.~945--948,
  \url{https://doi.org/10.1109/GlobalSIP.2013.6737048}.

\bibitem{Villani2009}
{\sc C.~Villani}, {\em Optimal Transport: Old and New}, vol.~338 of Grundlehren
  der mathematischen Wissenschaften, Springer, Berlin, Heidelberg, 2009,
  \url{https://doi.org/10.1007/978-3-540-71050-9}.

\bibitem{Weaver}
{\sc N.~Weaver}, {\em Lipschitz algebras}, World Scientific, second~ed., 2018.

\bibitem{Zaichyk2023}
{\sc H.~Zaichyk, A.~Biess, A.~Kontorovich, and Y.~Makarychev}, {\em Efficient
  {K}irszbraun extension with applications to regression}, Mathematical
  Programming, 207 (2023), pp.~617--642,
  \url{https://doi.org/10.1007/s10107-023-02023-6}.

\bibitem{zhang2022}
{\sc B.~Zhang, D.~Jiang, D.~He, and L.~Wang}, {\em Rethinking {L}ipschitz
  neural networks and certified robustness: A {B}oolean function perspective},
  Advances in neural information processing systems, 35 (2022),
  pp.~19398--19413,
  \url{https://proceedings.neurips.cc/paper_files/paper/2022/file/7b04ec5f2b89d7f601382c422dfe07af-Paper-Conference.pdf}.

\bibitem{zhangIEEE}
{\sc K.~Zhang, W.~Zuo, Y.~Chen, D.~Meng, and L.~Zhang}, {\em Beyond a
  {G}aussian denoiser: Residual learning of deep {CNN} for image denoising},
  IEEE Transactions on Image Processing, 26 (2017), pp.~3142--3155,
  \url{https://doi.org/10.1109/TIP.2017.2662206}.

\bibitem{zhang}
{\sc K.~Zhang, W.~Zuo, S.~Gu, and L.~Zhang}, {\em Learning deep {CNN} denoiser
  prior for image restoration}, in 2017 IEEE Conference on Computer Vision and
  Pattern Recognition (CVPR), 2017, pp.~2808--2817,
  \url{https://doi.org/10.1109/CVPR.2017.300}.

\end{thebibliography}

\end{document}